%&LaTeX
%Version  preliminar

\documentclass[12pt]{amsart}
%\batchmode
\usepackage{amsmath}
\usepackage{amsfonts}
% Diï¿-ï¿-- problemas con las fuentes  sl\usepackage{times}
%\usepackage{amssymb}
%\setlength{\textwidth}{17.5cm}
%\oddsidemargin=-1cm
%\evensidemargin=-1cm
\setlength{\textheight}{20cm} \textwidth16cm \hoffset=-2truecm
\begin{document}
\numberwithin{equation}{section}

\def\1#1{\overline{#1}}
\def\2#1{\widetilde{#1}}
\def\3#1{\widehat{#1}}
\def\4#1{\mathbb{#1}}
\def\5#1{\frak{#1}}
\def\6#1{{\mathcal{#1}}}

\newcommand{\de}{\partial}
\newcommand{\R}{\mathbb R}
\newcommand{\Ha}{\mathbb H}
\newcommand{\al}{\alpha}
\newcommand{\tr}{\widetilde{\rho}}
\newcommand{\tz}{\widetilde{\zeta}}
\newcommand{\tk}{\widetilde{C}}
\newcommand{\tv}{\widetilde{\varphi}}
\newcommand{\hv}{\hat{\varphi}}
\newcommand{\tu}{\tilde{u}}
\newcommand{\tF}{\tilde{F}}
\newcommand{\debar}{\overline{\de}}
\newcommand{\Z}{\mathbb Z}
\newcommand{\C}{\mathbb C}
\newcommand{\Po}{\mathbb P}
\newcommand{\zbar}{\overline{z}}
\newcommand{\G}{\mathcal{G}}
\newcommand{\So}{\mathcal{S}}
\newcommand{\Ko}{\mathcal{K}}
\newcommand{\U}{\mathcal{U}}
\newcommand{\B}{\mathbb B}
\newcommand{\oB}{\overline{\mathbb B}}
\newcommand{\Cur}{\mathcal D}
\newcommand{\Dis}{\mathcal Dis}
\newcommand{\Levi}{\mathcal L}
\newcommand{\SP}{\mathcal SP}
\newcommand{\Sp}{\mathcal Q}
\newcommand{\A}{\mathcal O^{k+\alpha}(\overline{\mathbb D},\C^n)}
\newcommand{\CA}{\mathcal C^{k+\alpha}(\de{\mathbb D},\C^n)}
\newcommand{\Ma}{\mathcal M}
\newcommand{\Ac}{\mathcal O^{k+\alpha}(\overline{\mathbb D},\C^{n}\times\C^{n-1})}
\newcommand{\Acc}{\mathcal O^{k-1+\alpha}(\overline{\mathbb D},\C)}
\newcommand{\Acr}{\mathcal O^{k+\alpha}(\overline{\mathbb D},\R^{n})}
\newcommand{\Co}{\mathcal C}
\newcommand{\Hol}{{\sf Hol}(\mathbb H, \mathbb C)}
\newcommand{\Aut}{{\sf Aut}(\mathbb D)}
\newcommand{\D}{\mathbb D}
\newcommand{\oD}{\overline{\mathbb D}}
\newcommand{\oX}{\overline{X}}
\newcommand{\loc}{L^1_{\rm{loc}}}
\newcommand{\la}{\langle}
\newcommand{\ra}{\rangle}
\newcommand{\thh}{\tilde{h}}
\newcommand{\N}{\mathbb N}
\newcommand{\kd}{\kappa_D}
\newcommand{\Hr}{\mathbb H}
\newcommand{\ps}{{\sf Psh}}
\newcommand{\Hess}{{\sf Hess}}
\newcommand{\subh}{{\sf subh}}
\newcommand{\harm}{{\sf harm}}
\newcommand{\ph}{{\sf Ph}}
\newcommand{\tl}{\tilde{\lambda}}
\newcommand{\gdot}{\stackrel{\cdot}{g}}
\newcommand{\gddot}{\stackrel{\cdot\cdot}{g}}
\newcommand{\fdot}{\stackrel{\cdot}{f}}
\newcommand{\fddot}{\stackrel{\cdot\cdot}{f}}
\def\v{\varphi}
\def\Re{{\sf Re}\,}
\def\Im{{\sf Im}\,}

%Pavel's macros
\newcommand{\Real}{\mathbb{R}}
\newcommand{\Natural}{\mathbb{N}}
\newcommand{\Int}{\mathbb{Z}}
\newcommand{\UD}{\mathbb{D}}
\newcommand{\clS}{\mathcal{S}}
\newcommand{\gtz}{\ge0}
\newcommand{\gt}{\ge}
\newcommand{\lt}{\le}
\newcommand{\fami}[1]{(#1_{s,t})}
\newcommand{\famc}[1]{(#1_t)}
\newcommand{\ts}{t\gt s\gtz}

\newcommand{\step}[1]{\vskip5mm\begin{itemize}\item[\hbox{[Step #1]}]}
\newcommand{\estep}{\end{itemize}}

\newcommand{\mcite}[1]{\csname b@#1\endcsname}
%end of Pavel's macros block

%\tableofcontents

%\def\Label#1{\label{#1}{\bf (#1)}~}
\def\Label#1{\label{#1}}

% Standard sets

\def\cn{{\C^n}}
\def\cnn{{\C^{n'}}}
\def\ocn{\2{\C^n}}
\def\ocnn{\2{\C^{n'}}}
\def\je{{\6J}}
\def\jep{{\6J}_{p,p'}}
\def\th{\tilde{h}}

% Abbreviations

\def\dist{{\rm dist}}
\def\const{{\rm const}}
\def\rk{{\rm rank\,}}
\def\id{{\sf id}}
\def\aut{{\sf aut}}
\def\Aut{{\sf Aut}}
\def\CR{{\rm CR}}
\def\GL{{\sf GL}}
\def\Re{{\sf Re}\,}
\def\Im{{\sf Im}\,}
\def\U{{\sf U}}

\def\la{\langle}
\def\ra{\rangle}

\emergencystretch15pt \frenchspacing

\newtheorem{theorem}{Theorem}[section]
\newtheorem{lemma}[theorem]{Lemma}
\newtheorem{proposition}[theorem]{Proposition}
\newtheorem{corollary}[theorem]{Corollary}

\theoremstyle{definition}
\newtheorem{definition}[theorem]{Definition}
\newtheorem{example}[theorem]{Example}

\theoremstyle{remark}
\newtheorem{remark}[theorem]{Remark}
\numberwithin{equation}{section}

\title[Loewner chains]{Loewner chains in the unit disk}

\author[M. D. Contreras]{Manuel D. Contreras $^\dag$}

\author[S. D\'{\i}az-Madrigal]{Santiago D\'{\i}az-Madrigal $^\dag$}
\address{Camino de los Descubrimientos, s/n\\
Departamento de Matem\'{a}tica Aplicada II\\
Escuela T\'{e}cnica Superior de Ingenieros\\
Universidad de Sevilla\\
Sevilla, 41092\\
Spain.}\email{contreras@us.es} \email{madrigal@us.es}

\author[P. Gumenyuk]{Pavel Gumenyuk $^\ddag$}
\address{Department of
Mathematics\\ University of Bergen, Johannes Brunsgate 12\\ Bergen 5008, Norway. }
\email{Pavel.Gumenyuk@math.uib.no}

%\thanks{Agradecimientos}
\date{\today }
\subjclass[2000]{Primary 30C80; Secondary 34M15, 30D05}

\keywords{Loewner chains, evolution families}

\thanks{$^\dag$ Partially supported by the \textit{Ministerio
de Ciencia e Innovaci\'on} and the European Union (FEDER), project MTM2006-14449-C02-01, by \textit{La
Consejer\'{\i}a de Educaci\'{o}n y Ciencia de la Junta de Andaluc\'{\i}a} and by the ESF Networking Programme
``Harmonic and Complex Analysis and its Applications''}

\thanks{$^\ddag$ Partially supported by the ESF Networking Programme
``Harmonic and Complex Analysis and its Applications'', the {\it Research Council of Norway} and the {\it
Russian Foundation for Basic Research} (grant \#07-01-00120)}

\begin{abstract}
In this paper we introduce a general version of the notion of
Loewner chains which comes from the new and unified treatment, given
in \cite{BCM1},
%PAVEL
of the radial and chordal variant of the Loewner differential equation, which is of special interest in
geometric function theory as well as for various developments it has given rise to, including the famous
Schramm-Loewner evolution.  In this very general setting, we establish a deep correspondence between these
chains and the evolution families introduced in \cite{BCM1}. Among other things, we show that, up to a
Riemann map, such a correspondence is one-to-one. In a similar way as in the classical Loewner theory, we
also prove
%PAVEL
that these chains are solutions of a certain partial differential equation which resembles (and includes
as a very particular case) the classical Loewner-Kufarev PDE.

\end{abstract}

\maketitle

\tableofcontents

\section{Introduction}

\subsection{Classical Loewner theory}\label{classical}

In 1923 Loewner~\cite{Loewner} introduced the so-called {\it parametric  method} in geometric function theory, mainly in hope to solve the famous Bieberbach problem about obtaining sharp estimates of Taylor coefficients of normalized holomorphic univalent functions in the unit disk. It is worth recalling that the solution of this problem, given in 1984 by de Branges, relied also on this method. The modern form of the parametric method is mainly due to contributions by Kufarev~\cite{Kufarev} and Pommerenke~\cite{Pommerenke-65}. Let us briefly recall the main constructions (see, e.g.,~\cite[Chapter~6]{Pommerenke}).

Let $f_0(z)=z+a_2z^2+\ldots$ be a holomorphic univalent function in the unit disk $\UD:=\{z:|z|<1\}$. One can always embed this function into a uniparametric family $(f_t)_{t\gtz}$ of holomorphic univalent functions in $\UD$ satisfying the following two properties: $f_t(z)=e^tz+a_2(t)z^2+\ldots$ for any $t\gtz$ and $f_s(\UD)\subset f_t(\UD)$ whenever $t\ge s\gtz$. These type of families are called (classical) Loewner chains. One of the keystones of the parametric method is the fact that every such a family is differentiable in $t$ almost everywhere on $[0,+\infty)$ and independently on $z$. Moreover, they satisfy
%PAVEL
the following PDE

\begin{equation}\label{LK_classical}
\frac{\partial f_t(z)}{\partial t}=z \frac{\partial f_t(z)}{\partial z} p(z,t),
\end{equation}
where the driving term $p(z,t)$ is  measurable with respect to $t\in[0,+\infty)$ for all $z\in\UD$ and holomorphic in $z\in\UD$ with $p(0,t)=1$ and $\Re p(z,t)>0$ almost everywhere on $t\in[0,+\infty)$. This equation is called the {\it Loewner\,--\,Kufarev PDE}.

For each $t\gt s\gtz$, the function $\varphi_{s,t}:=f_t^{-1}\circ f_s$ is clearly a holomorphic univalent self-mapping of~$\UD$ and the whole family $(\varphi_{s,t})_{t\gt s\gtz}$ is referred to as the associated {\it evolution family} (sometimes transition family or semigroup family) of the Loewner chain. The remarkable fact is that, fixing $z\in\UD$ and $s\ge0$, the functions $w(t)=\varphi_{s,t}(z)$ are integrals of the characteristic equation for~\eqref{LK_classical}
\begin{equation}\label{LK_classical_ODE}
 \frac{dw}{dt}=-wp(w,t)
\end{equation}
with the initial condition $w(s)=z$. This equation is called the {\it Loewner\,--\,Kufarev ODE} and the right member of the equation, the associated vector field. Note that the family $(\varphi_{s,t})$ is continuous in $t\in[s,+\infty)$ in the compact-open topology of $\mathrm{Hol}(\UD,\mathbb{C})$ for each $s\ge0$, and satisfies the algebraic conditions
\begin{equation}\label{algebraic}
 \varphi_{s,s}=id_\UD,~s\ge0,\text{\mbox{~~~}~and~\mbox{~~~}}\varphi_{s,t}=\varphi_{u,t}\circ\varphi_{s,u},~0\le
s\le u\le t<+\infty.
\end{equation}

Another crucial point in the parametric method is that the function $f_0$ can be reconstructed by means of the integrals of~\eqref{LK_classical_ODE}. Namely,
\begin{equation*}\label{parametric}
 \lim_{t\to+\infty}e^t\varphi_{0,t}=f_0.
\end{equation*}

Equation~\eqref{LK_classical_ODE} can be considered on its own, without any {\it a priori} connection to Loewner chains. However, taking any driving term $p(z,t)$ satisfying the above conditions,
%PAVEL
this equation has a unique solution $w(t)=\varphi_{s,t}(z)$, assuming the initial condition~$w(s)=z$. Then, it is possible to define  $f_s:=\lim_{t\to+\infty}e^t\varphi_{s,t}$ and generate in this way a Loewner chain. Clearly, $(\varphi_{s,t})$ is an evolution family associated to this chain $(f_t)$.

In other words, within the framework of the classical parametric
method, there is a one-to-one correspondence between this concept of
evolution families, the driving terms (or the vector fields)
appearing in Loewner equations and the so-called classical Loewner
chains.

\subsection{Chordal Loewner equation}

In his original work~\cite{Loewner}, Loewner paid special attention
to what we call now Loewner chains of slit mappings of~the unit
disk~$\UD$. This Loewner chain $(f_t)$ starts with a conformal
mapping onto the complex plane minus a Jordan curve going to
infinity and, thereafter, the family is
%PAVEL
obtained by erasing gradually
this curve. In this case, the ODE equation~\eqref{LK_classical_ODE}
assumes the following form (see, e.\,g., \cite[chapter~III
\S2]{Goluzin} or \cite[chapter~3]{Duren})
\begin{equation}\label{L_ODE}\frac{dw}{dt}=-w\,\frac{\kappa(t)+w}{\kappa(t)-w},\end{equation}
where $\kappa:[0,+\infty)\to\mathbb{R}$ is a continuous function.
The corresponding functions $\varphi_{s,t}=f^{-1}_t\circ f_s$ map
$\UD$ onto $\UD$ with a slit generated by a Jordan curve starting
from the boundary (see \cite[Chapter 17]{Conway2}). These
self-mappings of the unit disk  are normalized at the origin:
$\varphi_{s,t}(0)=0$, $\varphi_{s,t}'(0)>0$.  However, in many
applications, one find quite similar examples but where the natural
normalization is at a boundary point of the unit disk. In this case,
it is possible to consider a real analogue of~\eqref{L_ODE}, the
{\it chordal Loewner equation} (see, e.\,g., \cite[chapter~IV
\S7]{Aleksandrov}), which is traditionally written for the upper
half-plane $\mathbb{U}:=\{z:\mathop{\rm Im} z>0\}$ instead of the
unit disk~$\UD$ because there the associated vector field assumes
the simpler form
\begin{equation}\label{chordal}
\frac{dw}{dt}=\frac{2}{\xi(t)-w},\quad w(0)=z,
\end{equation}
where $\xi:[0,+\infty)\to\mathbb{R}$ is a real-valued driving term. In this chordal context, we could also talk again about driving terms, Loewner chains and evolution families.
In contrast to the chordal variant, classical Loewner theory is mentioned in the recent literature as the {\it radial case}.

The above chordal variant has been extended to cover a wider variety of new situations. For instance, the relationship between some kind of what we can name chordal Loewner chains and what deserve be named chordal evolution families has been considered by Goryainov and Ba in~\cite{Goryainov-Ba} and by Bauer in~\cite{Bauer}.

A recent burst of interest in Loewner theory is due in part to the
so-called {\it Schramm\,--\,Loewner evolution} (SLE, also known as
{\it stochastic Loewner evolution}), introduced in 2000 by
Schramm~\cite{Schramm}. SLE is an evolution model similar to Loewner
chains (namely, given by equation~\eqref{L_ODE} or~\eqref{chordal})
but with the driving term defined via a Brownian motion. In other
words, it is a probabilistic version of the previously known radial
and chordal Loewner chains. Both radial and chordal cases of SLE
have important applications. In fact, they turn out to be very
useful tools for the study of conformally invariant scaling limits
of some classical statistical 2-dimensional lattice models,
see~[\mcite{Lawler}\,--\,\mcite{Lawler-Schramm-Werner}].

Some recent developments concerning the relationship between properties of
%PAVEL
the driving term and the geometry of solutions to~\eqref{L_ODE} and~\eqref{chordal} can be found in~\cite{Lind, Marshall-Rohde, Prokhorov-Vasiliev}.

\subsection{Generalization of classical evolution families}

Loewner\,--\,Kufarev ODE \eqref{LK_classical_ODE}  {\it defines a holomorphic evolution} in the unit disk. That is, for any initial point $z\in\UD$ and any starting instance $s\ge0$, the solution $w=w(t)$ to the initial value problem~$w(s)=z$ for  equation~\eqref{LK_classical_ODE} is unique, exists for all $t\ge s$, and the dependence of $w(t)$ on $z$ reveals a holomorphic self-mapping of~$\UD$. The same is true for the chordal Loewner equation~\eqref{chordal} and its generalizations when being rewritten for $\UD$. Some natural questions arise: {\it are there other examples of ODE with the same property?, what is the most general form of such type of equations?, is it possible to unify these holomorphic evolutions, bearing in mind the many similarities between them?}

The answer for the autonomous case (the vector field is of the form $dw/dt=G(w)$) comes from the theory of one-parametric semigroups of holomorphic functions (see the definition in Section~\ref{semi}). They have important applications in the theory of operators acting on spaces of analytic functions (see, e.g.,~\cite{Shapiro,Shoikhet}) as well as in the theory of stochastic processes (see, e.g.,~\cite{Goryainov1, Goryainov2}).  Berkson and Porta~\cite{Berkson-Porta} found the most general form of such a function~$G$, namely
\[
G(z)=(\tau-z)(1-\overline{\tau}z)p(z),\text{ \quad}z\in\mathbb{D},
\]
where $p$ is a holomorphic function in $\D$ with $\Re p(z)\geq0$ and $\tau\in\oD$ (again see Section~\ref{semi} for more  details).

However, in the non-autonomous case and as far as we know, there
were no satisfactory answers to the above questions before
~\cite{BCM1}. Certainly, a large number of examples related to
chordal and radial Loewner differential equations has been treated
in the literature but, at the same time, one can also find several
(similar but different) notions
%PAVEL
playing the role of Loewner
chains, vector fields or, specially, evolution families.
%PAVEL
For instance, in~\cite{Goryainov} some classes of holomorphic univalent self-mappings, closed with respect to composition, are considered and evolution families within these classes are defined as
two-parametric families $(\varphi_{s,t})_{0\leq s\leq t}$ continuous
with respect to $t\in[s,+\infty)$ in the open-compact topology of
$\mathrm{Hol}(\UD,\mathbb{C})$ for each $s\ge0$ and satisfying the
algebraic conditions~\eqref{algebraic}. Moreover, in order to
describe evolution families by means of differential equations, an
additional condition is also imposed: namely, a certain functional
applied to $\varphi_{0,t}$ is required to be (locally) absolutely
continuous with respect to~~$t$. It is worth comparing this approach
with the very classical case,
%PAVEL
where one can regard the equality $\varphi_{0,t}'(0)=e^{-t}$ as a kind of additional condition ensuring differentiability in $t$.

As we have just partially said, answers for the above questions under very general assumptions follow from results of the recent paper~\cite{BCM1}  by Bracci and the first two authors of this paper. Taking the whole class of holomorphic self-maps of $\UD$, they introduced a general notion of {\it evolution family in the unit disk} which includes, as very particular cases, one-parametric semigroups as well as all of those evolution families arising in Loewner theory, both for the radial and chordal variants. Now we cite their definition. Note that the functions $\varphi_{s,t}$ are {\it not assumed a priori to be univalent} in $\UD$.

\begin{definition}\label{def-ev}
A family $(\varphi_{s,t})_{0\leq s\leq t<+\infty}$ of holomorphic self-maps of the unit disc  is an {\sl
evolution family of order $d$} with $d\in [1,+\infty]$ (in short, an {\sl $L^d$-evolution family}) if

\begin{enumerate}
\item[EF1.] $\varphi_{s,s}=id_{\mathbb{D}},$

\item[EF2.] $\varphi_{s,t}=\varphi_{u,t}\circ\varphi_{s,u}$ for all $0\leq
s\leq u\leq t<+\infty,$

\item[EF3.] for all $z\in\mathbb{D}$ and for all $T>0$ there exists a
non-negative function $k_{z,T}\in L^{d}([0,T],\mathbb{R})$ such that
\[
|\varphi_{s,u}(z)-\varphi_{s,t}(z)|\leq\int_{u}^{t}k_{z,T}(\xi)d\xi
\]
for all $0\leq s\leq u\leq t\leq T.$
\end{enumerate}
\end{definition}

One of the main results of~\cite{BCM1} is that any evolution family $(\varphi_{s,t})$ can be obtained via solutions to an ODE of the form $dw/dt=G(w,t)$. Moreover, they characterize all the functions (or, in other words, all the vector fields) $G$ that generate evolution families. Indeed, these vector fields resemble to a non-autonomous (the variable $t$ is present) version of the celebrated Berkson-Porta representation theorem (see Section~\ref{evolution-families} for further definitions and full statements of these results). Nevertheless, a one-to-one correspondence between evolution families and certain type of vector fields is established in that paper. There, it is also explained how to recover the semigroup, radial and chordal cases in this new framework. Indeed, the three authors were able to formulate a similar theory of generalized evolution families for arbitrary hyperbolic complex manifolds \cite{BCM2}.

In ~\cite{BCM1}, the following natural question was left opened
implicitly: is there a generalized notion of Loewner chain which can
be put in one-to-one correspondence with those generalized evolution
families or, equivalently, with those generalized Berkson-Porta
vector fields? In the next subsection, we deal with this question
presenting our main results about it.

\subsection{Main results}

As we mentioned in Section~\ref{classical}, Loewner\,--\,Kufarev equation~\eqref{LK_classical_ODE} generates a special type of evolution families and there is a one-to-one correspondence between such evolution families and classical Loewner chains.

In this paper we consider the analogous question for arbitrary evolution families in the sense of Definition~\ref{def-ev}. First of all, we give a suitable definition of Loewner chain for our general setting.

\begin{definition}\label{def-lc}
A family $(f_t)_{0\leq t<+\infty}$ of holomorphic maps of the unit disc will be called a {\sl Loewner chain of order $d$}
with $d\in [1,+\infty]$ (in short, an {\sl $L^d$-Loewner chain}) if

\begin{enumerate}
\item[LC1.] each function $f_t:\D\to\C$ is univalent,

\item[LC2.] $f_s(\D)\subset f_t(\D)$ for all $0\leq s < t<+\infty,$

\item[LC3.] for any compact set $K\subset\mathbb{D}$ and  all $T>0$ there exists a
non-negative function $k_{K,T}\in L^{d}([0,T],\mathbb{R})$ such that
\[
|f_s(z)-f_t(z)|\leq\int_{s}^{t}k_{K,T}(\xi)d\xi
\]
for all $z\in K$ and all $0\leq s\leq t\leq T$.
\end{enumerate}
A  Loewner chain $\famc f$ will be said to be {\it normalized} if  $f_0(0)=0$ and $f'_0(0)=1$ (notice that we only
normalize the function $f_0$).
\end{definition}

Our main results concerning relations between Loewner chains and evolution families are stated in the following three theorems.

\begin{theorem}\label{T1}
For any Loewner chain $(f_t)$ of order $d\in[1,+\infty]$, if we define
$$
\varphi_{s,t}:= f_t^{-1}\circ f_s, \quad 0\le s\le t,
$$
then $(\v_{s,t})$ is an evolution family of the same order~$d$. Conversely, for any evolution family $(\v_{s,t})$ of order~$d\in[1,+\infty]$, there exists a Loewner chain $(f_t)$ of the same order~$d$ such that the following equation holds
\begin{equation}\label{main_EV_LC}
 f_t\circ\varphi_{s,t}=f_s,\quad 0\le s\le t.
\end{equation}
\end{theorem}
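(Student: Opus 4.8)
The plan is to treat the two directions separately, with the first (chain $\Rightarrow$ family) being essentially bookkeeping and the second (family $\Rightarrow$ chain) being the substantial part.

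For the forward direction, suppose $(f_t)$ is an $L^d$-Loewner chain. Since each $f_t$ is univalent and $f_s(\D)\subset f_t(\D)$, the map $\varphi_{s,t}:=f_t^{-1}\circ f_s$ is a well-defined holomorphic self-map of $\D$; EF1 is immediate from $\varphi_{s,s}=f_s^{-1}\circ f_s=\id_\D$, and EF2 is the routine composition identity $f_t^{-1}\circ f_s=(f_t^{-1}\circ f_u)\circ(f_u^{-1}\circ f_s)$. The only point requiring care is EF3. Fix $z\in\D$ and $T>0$. The trouble is that $\varphi_{s,u}(z)$ and $\varphi_{s,t}(z)$ are evaluated through $f_t^{-1}$ at possibly different points, so I would first localize: for $0\le s\le u\le t\le T$ write $\varphi_{s,u}(z)-\varphi_{s,t}(z)=f_t^{-1}(f_u(w))-f_t^{-1}(f_t(w))$ where $w:=\varphi_{s,u}(z)=f_u^{-1}(f_s(z))$. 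A compactness argument (the points $f_s(z)$ for $s\in[0,T]$, hence the points $w$, range over a compact subset of $f_T(\D)$, on whose preimages under $f_t^{-1}$ one has a uniform Lipschitz/derivative bound in $t$ via Cauchy estimates; here one uses that the images $f_t(\D)$ are nested and contained in $f_T(\D)$) converts LC3 for $(f_t)$ into EF3 for $(\varphi_{s,t})$ with the constant absorbed into a new $L^d$ majorant. I expect this to go through but it is the one place one must be slightly attentive to uniformity of the local inverses.

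For the converse direction, given an $L^d$-evolution family $(\varphi_{s,t})$, the goal is to produce univalent $f_t$ with $f_t\circ\varphi_{s,t}=f_s$. The natural candidate is a direct-limit construction: as $t\to+\infty$ the maps $\varphi_{s,t}$ should, after suitable normalization, stabilize, and $f_s$ is the ``limit chart.'' Concretely, I would fix $s$, consider the normalized maps obtained from $\varphi_{s,t}$ (for instance divide by the derivative $\varphi_{s,t}'(0)$, or pass to an appropriate subsequence) and try to show $h_{s,t}:=(\varphi_{s,t})$, suitably rescaled, converges locally uniformly on $\D$ as $t\to+\infty$; the limit $f_s$ is then univalent by the Hurwitz-type theorem for univalent limits (each $\varphi_{s,t}$ need not be univalent, so one must be careful — this is a real issue, since the $\varphi_{s,t}$ are not assumed injective). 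The identity $f_t\circ\varphi_{s,t}=f_s$ would follow by taking $u\to+\infty$ in the cocycle relation $\varphi_{s,u}=\varphi_{t,u}\circ\varphi_{s,t}$. Verifying LC1 (univalence of the limit) and LC2 ($f_s(\D)\subset f_t(\D)$, which should come from the range inclusion $\varphi_{t,u}(\D)\supset\varphi_{s,u}(\D)$ composed correctly) are then the structural points, and LC3 for $(f_t)$ should be extracted from EF3 for $(\varphi_{s,t})$ by the same type of local estimate as above, run in reverse.

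The main obstacle, and the one I would budget the most effort for, is establishing convergence of the (normalized) $\varphi_{s,t}$ as $t\to+\infty$ together with univalence of the limit, without assuming the $\varphi_{s,t}$ are themselves univalent. The classical proof uses $\varphi_{s,t}'(0)=e^{-(t-s)}$ and Koebe-type distortion to get normal-family compactness and univalence of the limit; here there is no such built-in normalization, so I would instead work intrinsically: use the Schwarz–Pick metric to show the maps $\varphi_{s,t}$ contract $\D$ in a controlled way, and use the monotone behaviour of the ranges $\varphi_{s,t}(\D)$ (decreasing in $t$ for fixed $s$) to build $f_s(\D)$ as an increasing union of Riemann-mapped pieces, i.e. realize $(f_t)$ essentially as the Carathéodory kernel / direct limit of the domains. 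Univalence of each $f_t$ would then be automatic from the construction as a Riemann map of the limiting domain, and equation~\eqref{main_EV_LC} would encode the compatibility of the Riemann maps. Once the limit exists and is univalent, verifying LC2 and LC3 is comparatively routine.
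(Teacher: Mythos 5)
Your forward direction is a reasonable sketch and matches the paper's strategy for Theorem~\ref{LCimpliesEF}: the content is exactly the uniform control of the inverses $f_t^{-1}$ on compacta, which the paper carries out via an argument-principle integral representation of $f_u^{-1}$, a uniform bound $\sup|f_t^{-1}\circ f_s|<1$ on compacta, a bound on the lengths of the image curves $f_t\circ\gamma$, and a Cauchy-integral estimate along those curves. Your plan names the right difficulty there and would presumably land on essentially the same argument.

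The converse direction is where the proposal has genuine gaps. First, the obstacle you budget the most effort for --- univalence of the $\varphi_{s,t}$ --- is not an obstacle at all: by \cite[Corollary 6.3]{BCM1}, quoted at the start of Section~\ref{evolution-families}, every member of an evolution family is automatically univalent; the paper then gets univalence of the limit from Hurwitz once it knows $g_s'(0)=1/\psi_{0,s}'(0)>0$. Second, and more seriously, your normalization ``divide $\varphi_{s,t}$ by $\varphi_{s,t}'(0)$'' cannot work as stated, because the maps need not fix the origin; the paper's first essential step is Proposition~\ref{EF-decomposition}, which conjugates by time-dependent automorphisms $h_t$ to reduce to a family $\psi_{s,t}$ with $\psi_{s,t}(0)=0$ and $\psi_{s,t}'(0)>0$. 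Only after this reduction is $t\mapsto\psi_{0,t}'(0)$ monotone (Schwarz lemma), which is what makes a limit plausible. Third, the existence of the limit $g_s=\lim_{t\to\infty}\psi_{s,t}/\psi_{0,t}'(0)$ is the analytic heart of the proof and your plan supplies no mechanism for it: the paper derives it from the Berkson--Porta representation of the associated Herglotz vector field, the explicit formula $\psi_{s,t}(z)=z\exp\bigl(-\int_s^t p(\psi_{s,\xi}(z),\xi)\,d\xi\bigr)$, and the estimate $|p(z,\xi)-p(0,\xi)|\le \tfrac{2|z|}{1-|z|}\,p(0,\xi)$, which makes the relevant integrals Cauchy as $t\to+\infty$. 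A Schwarz--Pick contraction remark or an appeal to Carath\'eodory kernel convergence does not substitute for this quantitative input (and your auxiliary claim that $\varphi_{s,t}(\UD)$ decreases in $t$ for fixed $s$ is false in general, e.g.\ for families of automorphisms). Finally, LC3 for the constructed chain is not ``comparatively routine'' from scratch; the paper isolates it as Lemma~\ref{equationimpliesLC}, which converts EF3 into LC3 via a Cauchy-integral estimate, and you would still need that step.
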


\begin{definition}
A Loewner chain $\famc f$ is said to be {\it associated with} an evolution family~$\fami\varphi$ if
it satisfies~\eqref{main_EV_LC}.
\end{definition}

\begin{remark}
 We will actually prove (see Lemma~\ref{equationimpliesLC}) that  {any Loewner chain $(f_t)$ associated with an evolution family $(\v_{s,t})$ of order~$d\in[1,+\infty]$  must be of the same order~$d$.}
\end{remark}

In general, fixed the evolution family $(\v_{s,t})$, the algebraic equation~\eqref{main_EV_LC} does not defined a unique Loewner chain. In fact, in some case, a plenty of different Loewner chains are associated with the same evolution family. The following theorem gives necessary and sufficient conditions for the uniqueness for a normalized Loewner chain associated with a given evolution family.

\begin{theorem}\label{T2}
 Let $(\v_{s,t})$ be an evolution family. Then there exists a unique
%PAVEL
 normalized Loewner chain $(f_t)$ associated with~$(\v_{s,t})$ such that $\cup_{t\ge0}f_t(\UD)$ is either an Euclidean disk or the whole complex plane~$\C$. Moreover, the following statements are equivalent:
\begin{itemize}
\item[(i)] the family $(f_t)$ is the only normalized Loewner chain associated with the evolution family~$(\v_{s,t})$;
\item[(ii)] for all $z\in\UD$, $$\beta(z):=\lim_{t\to+\infty}\frac{|\varphi_{0,t}'(z)|}{1-|\varphi_{0,t}(z)|^2}=0;$$
\item[(iii)] there exist at least one point $z\in\UD$ such that $\beta(z)=0$;
\item[(iv)] $\bigcup\limits_{t\gtz}f_t(\UD)=\C$.
\end{itemize}
\end{theorem}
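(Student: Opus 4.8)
The plan is to build the canonical normalized Loewner chain first, then establish the equivalences by tracking a single geometric invariant of the evolution family through the construction.

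\textit{Construction of the canonical chain.} Fix an evolution family $(\v_{s,t})$. By Theorem~\ref{T1} there exists at least one associated Loewner chain; the point is to pick a distinguished one. First I would show that for each fixed $s$ the functions $g_t:=\v_{s,t}$, suitably rescaled, converge. Concretely, set $\lambda_t(z):=\v'_{0,t}(z)/(1-|\v_{0,t}(z)|^2)$ and note that by the Schwarz--Pick lemma applied to $\v_{u,t}$ the quantity $|\v'_{0,t}(z)|/(1-|\v_{0,t}(z)|^2)$ is non-increasing in $t$, hence $\beta(z)=\lim_{t\to\infty}|\v'_{0,t}(z)|/(1-|\v_{0,t}(z)|^2)$ exists in $[0,1)$. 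Similarly the derivative-at-a-point normalization forces the family $h_t:=$ (the solution of $h_t\circ\v_{0,t}=h_0$ with $h_0(0)=0$, $h'_0(0)=1$, chosen so that $h_t$ is the Riemann map of $\Omega_t:=h_t(\D)$ onto a disk or onto $\C$) to be well defined via the Carath\'eodory convergence theorem: the domains $f_t(\D)$ of any associated chain form an increasing sequence whose union $\Omega:=\bigcup_t f_t(\D)$ is a simply connected domain, either $\C$ or conformally a disk, and composing with the Riemann map of $\Omega$ normalized at $f_0(0)$ with derivative $1$ produces the required canonical chain. Uniqueness of \emph{this particular} chain follows because a Riemann map onto a fixed disk (or $\C$) with a prescribed value and derivative at one interior point is unique.

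\textit{The equivalences.} The heart is to connect the geometry of $\Omega=\bigcup_t f_t(\D)$ with the invariant $\beta$. For any associated normalized chain $(f_t)$, differentiating $f_t\circ\v_{0,t}=f_0$ gives $f'_0(z)=f'_t(\v_{0,t}(z))\,\v'_{0,t}(z)$, and an application of the Koebe distortion / Schwarz--Pick estimates to the univalent maps $f_t$ on the disk of hyperbolic radius centered at $\v_{0,t}(z)$ shows that $\mathrm{dist}\big(f_0(z),\partial\Omega\big)$ is comparable, up to universal constants, to $|f'_t(\v_{0,t}(z))|\,(1-|\v_{0,t}(z)|^2)=|f'_0(z)|/\lambda_t(z)$, whose limit as $t\to\infty$ is $|f'_0(z)|/\beta(z)$ (interpreted as $+\infty$ when $\beta(z)=0$). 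Thus $\beta(z)=0$ at some (equivalently, by the monotonicity-and-harmonicity of $z\mapsto\beta(z)$, at every) point precisely when $\mathrm{dist}(f_0(z),\partial\Omega)=\infty$, i.e. $\Omega=\C$. This simultaneously yields (ii)$\Leftrightarrow$(iii)$\Leftrightarrow$(iv) and shows that the value of $\beta$ (hence of $\Omega$) is the \emph{same} for every associated normalized chain, which is exactly what one needs for (iv)$\Rightarrow$(i): if $\Omega=\C$ then any associated normalized chain has image-union $\C$, so it coincides with the canonical one constructed above.

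\textit{From (i) to (iv), i.e. non-uniqueness when $\Omega\neq\C$.} This is the step I expect to be the main obstacle, since it requires \emph{producing} a second associated normalized chain. The idea is: if the canonical $\Omega$ is (conformally) a disk, compose the canonical chain $(f_t)$ with a fixed automorphism $\Psi$ of $\Omega$ that fixes $f_0(0)$ but is not the identity. Then $(\Psi\circ f_t)$ is again an associated chain, but it is no longer normalized, since $(\Psi\circ f_0)'(0)=\Psi'(f_0(0))\,f'_0(0)$ need not equal $1$ and the derivative is in general complex. One repairs this by realizing $\Omega$ as $\D$ via its Riemann map and letting $\Psi$ run through the disk automorphisms fixing the origin other than the identity — i.e. rotations other than the identity — which do change $f'_0(0)$ by a unimodular factor; to get a genuinely different \emph{normalized} chain one instead uses a one-parameter family of associated (non-canonical) chains indexed by the center of the target disk, or equivalently observes that the freedom $\beta(z)>0$ leaves room to choose, for the same evolution family, a chain whose image-union is a \emph{proper} disk of any admissible radius, and the map conjugating two such is a non-trivial M\"obius transformation fixing $f_0(0)$. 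Showing rigorously that at least one such alternative genuinely survives the normalization $f_0(0)=0$, $f'_0(0)=1$ is the delicate point; the key is that when $\beta(z_0)>0$ the limit domain carries a non-trivial group of conformal self-maps fixing the base point (the disk has the rotations), and the explicit rescaling $f_t:=\lim_{n\to\infty}\Theta_n\circ\v_{t,t_n}$ for suitable M\"obius $\Theta_n$ can be tuned to land on a different normalized limit. I would isolate this as a separate lemma and verify the limits exist by the same Carath\'eodory-convergence argument used in the construction step, now applied to a shifted sequence of normalizations.
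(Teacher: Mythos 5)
Your treatment of the equivalences (ii)$\Leftrightarrow$(iii)$\Leftrightarrow$(iv) is correct and takes a genuinely different route from the paper: you combine the chain rule $f_0'(z)=f_t'(\v_{0,t}(z))\v_{0,t}'(z)$ with the Koebe distortion estimate
$\tfrac14(1-|w|^2)|f_t'(w)|\le \mathrm{dist}(f_t(w),\partial f_t(\D))\le (1-|w|^2)|f_t'(w)|$
to identify $\beta(z)=0$ with $\mathrm{dist}(f_0(z),\partial\Omega)=+\infty$, whence all three statements collapse into ``$\Omega=\C$''. The paper instead extracts a locally uniform limit $f=\lim_n\psi_{0,t_n}$ of the conjugated (origin-fixing) family and argues that $f$ is either identically zero or univalent, so that $\beta$ vanishes at one point iff it vanishes everywhere; your argument is more geometric and avoids the normal-families dichotomy. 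The Schwarz--Pick monotonicity of $t\mapsto|\v_{0,t}'(z)|/(1-|\v_{0,t}(z)|^2)$ is the same as in the paper (note only that the limit lies in $[0,1/(1-|z|^2)]$, not $[0,1)$).

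There are, however, two genuine gaps. The first, which you yourself flag as unresolved, is the implication (i)$\Rightarrow$(iv), i.e.\ producing a second normalized associated chain when $\Omega\ne\C$. Your search among automorphisms of $\Omega$ fixing $f_0(0)$ is doomed from the start: for a disk centered at the base point the only such automorphism with derivative $1$ there is the identity, which is exactly why you find yourself forced into the vague ``tuned rescaling $\lim_n\Theta_n\circ\v_{t,t_n}$''. The correct observation -- and the one the paper uses -- is that the post-composing map need not be a self-map of $\Omega$ at all: if $(f_t)$ is the standard chain with $\bigcup_t f_t(\D)=\{|z|<1/\beta\}$ and $h\in\clS$ is \emph{any} normalized univalent function on $\UD$, then $g_t:=h(\beta f_t)/\beta$ satisfies $g_t\circ\v_{s,t}=g_s$, is univalent and normalized, and is a Loewner chain of the right order by Lemma~\ref{equationimpliesLC}; its image-union is $h(\UD)/\beta$, a different domain whenever $h\ne\mathrm{id}$. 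Since $\clS$ contains more than one element, non-uniqueness is immediate and there is no delicate limiting argument to be made.

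The second gap is in your uniqueness claim for the canonical chain. You appeal to uniqueness of ``a Riemann map onto a fixed disk with prescribed value and derivative at one interior point'', but the target disk is not fixed in advance: two normalized chains could a priori have unions equal to different disks. What is needed (and what the paper supplies) is the transition map: given two normalized associated chains $(f_t)$, $(g_t)$, the maps $k_t:=g_t\circ f_t^{-1}$ satisfy $k_t|_{f_s(\D)}=k_s$ for $s\le t$ by the algebraic relation, hence glue to a univalent map $k:\Omega_1\to\Omega_2$ onto, with $k(0)=0$, $k'(0)=1$. When $\Omega_1=\C$ Liouville forces $\Omega_2=\C$ and $k=\mathrm{id}$ (this is also the missing step in your ``(iv)$\Rightarrow$(i)''); when $\Omega_1$ and $\Omega_2$ are both disks \emph{centered at the origin}, the Schwarz lemma forces $k=\mathrm{id}$. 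The example $h(z)=z/(1-cz)\in\clS$, which maps $\UD$ onto an off-center Euclidean disk, shows that the centering hypothesis cannot be dropped, so your construction must record that the canonical chain's union is the disk $\{|z|<1/\beta\}$ centered at $0$, not merely ``a disk''.
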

%PAVEL
The Loewner chain $(f_t)$ in the above theorem will be called the {\it standard Loewner chain} associated with the evolution family~$(\varphi_{s,t})$.

In case of non-uniqueness (when conditions (i)\,--\,(iv) in Theorem~\ref{T2} fail to be satisfied), we provide an explicit formula expressing all the associated normalized Loewner chains by means of the standard Loewner chain plus some Riemman map.
%PAVEL
 In some sense, this formula tell us that the evolution procedures described by our Loewner chains are essentially unique up to a choice of the simply connected domain they are located in.
Denote by  $\clS$ the class of all univalent holomorphic functions $h$ in the unit disk~$\UD$,
normalized by $h(0)=h'(0)-1=0$.

\begin{theorem}\label{T3}
Suppose that under conditions of Theorem~\ref{T2},
$$\Omega:=\bigcup\limits_{t\gtz}f_t(\UD)\neq\C.$$ Then $\Omega=\{z:|z|<1/\beta(0)\}$ and the set $\mathcal L[(\v_{s,t})]$ of all normalized Loewner chains $(g_t)$ associated with the evolution family~$(\v_{s,t})$, is given by the formula
$$
\mathcal L[(\v_{s,t})]=\big\{(g_t)_{t\ge0}:g_t(z)=h\big(\beta(0) f_t(z)\big)/\beta(0),~ h\in\clS\big\}.
$$
\end{theorem}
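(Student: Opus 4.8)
\textbf{Proof proposal for Theorem~\ref{T3}.}

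The plan is to exploit the uniqueness part of Theorem~\ref{T2} together with the fact that two Loewner chains associated with the \emph{same} evolution family differ precisely by postcomposition with a biholomorphism between their ranges. First I would establish this general principle: if $(f_t)$ and $(g_t)$ are both associated with $(\v_{s,t})$, then for $s\le t$ we have $g_t\circ\v_{s,t}=g_s=f_s\circ\v_{s,t}^{-1}\circ\v_{s,t}$... more carefully, from $f_t\circ\v_{s,t}=f_s$ and $g_t\circ\v_{s,t}=g_s$ one gets $g_t\circ f_t^{-1}=g_s\circ f_s^{-1}$ on $f_t(\UD)\supset f_s(\UD)$ (using LC2 and univalence), so the maps $F_t:=g_t\circ f_t^{-1}$ glue to a single holomorphic map $F\colon\Omega=\bigcup_t f_t(\UD)\to\C$ with $g_t=F\circ f_t$ for every $t$. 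Since each $g_t$ is univalent and $\bigcup_t g_t(\UD)$ is open and connected, $F$ is a biholomorphism of $\Omega$ onto $\Omega':=\bigcup_t g_t(\UD)$. Conversely any biholomorphism $F$ of $\Omega$ onto a domain $\Omega'$ produces, via $g_t:=F\circ f_t$, a family satisfying LC1, LC2 and \eqref{main_EV_LC}; LC3 for $(g_t)$ follows on compact subsets because $F$ is Lipschitz on compacta of $\Omega$ and $(f_t)$ satisfies LC3 (this is essentially the content of the remark after Theorem~\ref{T1}).

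Next I would pin down $\Omega$. Taking $(f_t)$ to be the standard Loewner chain of Theorem~\ref{T2}, the hypothesis $\Omega\ne\C$ forces, by the equivalence (iv)$\Leftrightarrow$(ii) in Theorem~\ref{T2}, that $\beta(0)>0$, and by the construction in that theorem $\Omega$ is a Euclidean disk centered at $f_0(0)=0$; it remains only to identify its radius as $1/\beta(0)$. For this I would compute $\beta$ along the standard chain: since $f_t\circ\v_{0,t}=f_0$, differentiation gives $f_t'(\v_{0,t}(z))\,\v_{0,t}'(z)=f_0'(z)$, and applying the Koebe/Schwarz-type estimate for the univalent map $f_t$ on the disk $\Omega=\{|w|<R\}$ together with $f_t(\UD)\uparrow\Omega$ one shows
\[
\lim_{t\to+\infty}\frac{|\v_{0,t}'(z)|}{1-|\v_{0,t}(z)|^2}
=\frac{|f_0'(z)|}{\;R\bigl(1-|z|^2/R^2\bigr)\cdot(\text{something})\;}
\]
the point being that the Möbius map of $\UD$ onto $\Omega$ whose existence underlies the "Euclidean disk" alternative of Theorem~\ref{T2} realizes the limit of $f_t$ in a suitable sense; evaluating at $z=0$ with $f_0'(0)=1$ gives $\beta(0)=1/R$, i.e.\ $R=1/\beta(0)$. (I expect the cleanest route is to observe that $w\mapsto R\,w$ maps the unique normalized Loewner chain with range the \emph{unit} disk to the standard chain, reducing the computation of $R$ to the already-available formula for $\beta$.)

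Finally, combining the two ingredients: by the first paragraph every normalized $(g_t)\in\mathcal L[(\v_{s,t})]$ has the form $g_t=F\circ f_t$ for a biholomorphism $F$ of $\Omega=\{|w|<1/\beta(0)\}$ onto a simply connected domain, and normalization $g_0(0)=0$, $g_0'(0)=1$ together with $f_0(0)=0$, $f_0'(0)=1$ forces $F(0)=0$, $F'(0)=1$. Writing $F(w)=h(\beta(0)w)/\beta(0)$, the map $h(\zeta):=\beta(0)F(\zeta/\beta(0))$ is then a univalent holomorphic function on $\UD$ with $h(0)=0$, $h'(0)=1$, i.e.\ $h\in\clS$; and conversely every $h\in\clS$ yields via this formula a biholomorphism $F$ of $\Omega$ and hence, by the converse direction of the first paragraph, a normalized associated Loewner chain. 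This gives exactly the claimed description of $\mathcal L[(\v_{s,t})]$. The main obstacle I anticipate is the bookkeeping in the second paragraph—rigorously extracting the radius $R=1/\beta(0)$ from the definition of $\beta$ and the "Euclidean disk" clause of Theorem~\ref{T2}—since everything else is a gluing argument plus a normalization chase; I would handle it by the reduction to the radius-one normalized chain mentioned above, where $\beta(0)$ is computed directly.
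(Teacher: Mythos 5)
Your classification argument is essentially the paper's: the maps $k_t:=g_t\circ f_t^{-1}$ glue to a single normalized univalent map $k$ on $\Omega$ with $g_t=k\circ f_t$ (this is exactly the first half of the proof of Theorem~\ref{Gum_uniq}), the substitution $h(\zeta)=\beta(0)\,k\bigl(\zeta/\beta(0)\bigr)$ turns $k$ into an element of $\clS$, and the converse inclusion is the observation made in the paper just before Theorem~\ref{Gum_uniq} (and justified by Lemma~\ref{equationimpliesLC}) that postcomposing an associated chain with any univalent map on $\Omega$ yields again an associated chain. That part of your proposal is sound.

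The genuine gap is the identification $\Omega=\{w:|w|<1/\beta(0)\}$, which is part of the assertion of Theorem~\ref{T3} and cannot be read off from the ``Euclidean disk'' clause of Theorem~\ref{T2} alone. Your second paragraph does not establish it: the displayed formula contains an unresolved ``(something)'', and the proposed shortcut --- rescaling to ``the unique normalized Loewner chain with range the unit disk'' --- is circular, because the normalization $f_0(0)=0$, $f_0'(0)=1$ already determines the radius of the range of the standard chain; you cannot prescribe that range to be $\UD$, and there is no ``already-available formula'' relating $\beta$ to the radius until this very step is done. The paper obtains the radius inside the construction of the standard chain (Theorem~\ref{Gum_exist}): writing $\varphi_{s,t}=h_t\circ\psi_{s,t}\circ h_s^{-1}$ with $\psi_{s,t}(0)=0$, $\psi'_{s,t}(0)>0$ and $g_s=\lim_{t\to\infty}\psi_{s,t}/\psi'_{0,t}(0)$, one shows that for $\beta=\beta(0)>0$ the limits $\psi_s=\lim_{t\to\infty}\psi_{s,t}$ exist and satisfy $\psi_s'(0)=\beta/\psi'_{0,s}(0)\to1$ as $s\to\infty$, hence $\psi_s\to\id_\UD$ and $g_s\to z/\beta$ locally uniformly; since the domains $g_s(\UD)$ increase, their union is exactly $\{|z|<1/\beta\}$. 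Alternatively, your Koebe/Schwarz idea can be made precise: $\beta_0(t)$ is the contraction factor of the hyperbolic density under $\varphi_{0,t}=f_t^{-1}\circ f_0$, so $\beta(0)=\lim_{t\to\infty}\lambda_{f_t(\UD)}(0)=\lambda_\Omega(0)=1/R$ by monotone convergence of the hyperbolic densities of the increasing domains $f_t(\UD)$ --- but some such argument must actually be supplied rather than sketched.
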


In Section~\ref{evolution-families} we state some results from~\cite{BCM1} along with the necessary definitions. Moreover, we prove new statements concerning evolution families (see Definition~\ref{def-ev}), which we later use to obtain the main results of the paper.

In Section~\ref{main-section} we reformulate and prove the theorems
stated above. Namely, Theorem~\ref{T1} follows from
Theorems~\ref{LCimpliesEF} and \ref{Gum_exist}, while
Theorems~\ref{T2} and \ref{T3} follow from Theorem~\ref{Gum_uniq}
and Proposition~\ref{beta_prop}. Besides that, and in some cases, we
establish a necessary and sufficient
condition~(Theorem~\ref{no_univ_assum}) for a uniparametric family $(f_t)_{t\gtz}$ of holomorphic  ({\it but not a priori univalent}) maps defined in $\UD$ to be a normalized Loewner chain associated with a given evolution family.

In Section~\ref{diff_eq_sect} we find an analogue (Theorem~\ref{diff_eq_t}) of the Loewner\,--\,Kufarev PDE in this abstract context. We also show that there is a one-to-one correspondence between our concept of generalized Loewner chain and the generalized Berkson-Porta vector fields shown in \cite{BCM1}.

In Section~\ref{semi} we consider the special case of evolution families induced by semigroups of holomorphic functions in~$\UD$. In  particular, we show that the uniqueness of the K\oe nigs function is a consequence of Theorems~\ref{T1} and \ref{T2}.

\section{Evolution families and Herglotz vector fields in the unit disk}
\label{evolution-families}

Here we collect some known and new statements on evolution families (see Definition~\ref{def-ev}).

Let us first of all note that by \cite[Corollary 6.3]{BCM1}, {\it given an evolution family $(\varphi_{s,t})$, every function $\varphi_{s,t}$ is
univalent.} The following statement turns out to be also quite useful.
\begin{lemma}\cite[Lemma 3.6]{BCM1}
\label{EF-boundedness} Let $(\varphi_{s,t})$ be an evolution family in the unit disc
$\mathbb{D}$ of order $d\in[1,+\infty]$.  Then for each $0<T<+\infty$ and $0<r<1,$ there exists $R=R(r,T)<1$ such that
\[
|\varphi_{s,t}(z)|\leq R
\]
for all $0\leq s\leq t\leq T$ and $|z|\leq r.$
\end{lemma}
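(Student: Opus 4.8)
The plan is to reduce the claim to a uniform bound on $|\varphi_{s,t}(0)|$ and then propagate that bound to all $|z|\le r$ via the Schwarz--Pick lemma. Each $\varphi_{s,t}$ is a holomorphic self-map of $\D$, hence non-expanding for the hyperbolic distance $d_\D$; in particular, for $|z|\le r$ one has $d_\D\big(\varphi_{s,t}(z),\varphi_{s,t}(0)\big)\le d_\D(z,0)=\tanh^{-1}|z|\le\tanh^{-1}r$. So it suffices to produce a constant $a=a(T)<1$ with $|\varphi_{s,t}(0)|\le a$ for all $0\le s\le t\le T$: then the triangle inequality gives $d_\D\big(\varphi_{s,t}(z),0\big)\le\tanh^{-1}a+\tanh^{-1}r$, i.e. $|\varphi_{s,t}(z)|\le R(r,T):=\tanh\!\big(\tanh^{-1}a+\tanh^{-1}r\big)<1$, as wanted. (Univalence of the $\varphi_{s,t}$ will not be used.)

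To bound $|\varphi_{s,t}(0)|$ away from $1$, I would invoke EF3 with $z=0$: there is a nonnegative $k:=k_{0,T}\in L^d([0,T])\subset L^1([0,T])$ (the inclusion because $d\ge1$ and $[0,T]$ has finite measure) with $|\varphi_{s,u}(0)-\varphi_{s,t}(0)|\le\int_u^t k(\xi)\,d\xi$ for $0\le s\le u\le t\le T$. Set $M:=\int_0^T k<+\infty$. Taking $u=s$ and using EF1 (so $\varphi_{s,s}(0)=0$) yields $|\varphi_{s,t}(0)|\le\int_s^t k\le M$; this already handles the case $M<1$, but in general $M$ may be large and a finer argument is needed.

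Here is the finer argument. Fix $\delta\in(0,1)$, say $\delta=1/2$. Since $G(t):=\int_0^t k(\xi)\,d\xi$ is continuous and nondecreasing on $[0,T]$, for any $0\le s\le t\le T$ I can choose a partition $s=u_0<u_1<\dots<u_m=t$ with $\int_{u_j}^{u_{j+1}}k\le\delta$ for each $j$ and $m\le M/\delta+1$ (subdivide by the values of $G$). As above, $|\varphi_{u_j,u_{j+1}}(0)|\le\int_{u_j}^{u_{j+1}}k\le\delta$, hence $d_\D\big(\varphi_{u_j,u_{j+1}}(0),0\big)\le\tanh^{-1}\delta$. Put $p_0:=0$ and $p_{j+1}:=\varphi_{u_j,u_{j+1}}(p_j)$; by EF2, $p_m=\varphi_{s,t}(0)$. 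Triangle inequality plus non-expansiveness give
\begin{equation*}
d_\D(p_{j+1},0)\le d_\D\big(\varphi_{u_j,u_{j+1}}(p_j),\varphi_{u_j,u_{j+1}}(0)\big)+d_\D\big(\varphi_{u_j,u_{j+1}}(0),0\big)\le d_\D(p_j,0)+\tanh^{-1}\delta,
\end{equation*}
so $d_\D\big(\varphi_{s,t}(0),0\big)\le m\,\tanh^{-1}\delta\le(M/\delta+1)\tanh^{-1}\delta=:C(T)<+\infty$, independently of $s$ and $t$. Thus $a(T):=\tanh C(T)<1$ works, and the lemma follows.

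I expect the main obstacle to be recognizing that the direct estimate $|\varphi_{s,t}(0)|\le M$ from EF3 is insufficient when $M\ge1$, and that one must combine the cocycle property EF2 with the contractivity of holomorphic self-maps (subdividing $[s,t]$ into pieces of small $k$-integral). The remaining ingredients — the inclusion $L^d([0,T])\subset L^1([0,T])$ and the continuity of $G$ needed to build the partition — are routine.
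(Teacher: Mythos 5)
Your argument is correct, but note that the paper itself offers no proof of this statement: Lemma~\ref{EF-boundedness} is imported verbatim from \cite[Lemma 3.6]{BCM1}, so there is no in-paper argument to compare yours against. On its own merits, your proof is sound and self-contained. The reduction to a uniform bound on $|\varphi_{s,t}(0)|$ via the Schwarz--Pick contraction property and the triangle inequality for the Poincar\'e distance is clean, and you correctly identify the one non-trivial point: the crude estimate $|\varphi_{s,t}(0)|\le\int_0^T k_{0,T}$ coming from EF1 and EF3 need not be less than $1$. Your fix --- subdividing $[s,t]$ by the continuous nondecreasing function $G(t)=\int_0^t k_{0,T}$ into at most $M/\delta+1$ pieces on each of which the $k$-integral is at most $\delta$, then chaining the factorization $\varphi_{s,u_{j+1}}=\varphi_{u_j,u_{j+1}}\circ\varphi_{s,u_j}$ from EF2 with the non-expansiveness of each factor --- yields a bound on $d_\D(\varphi_{s,t}(0),0)$ depending only on $T$, which is exactly what is needed. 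All the auxiliary facts you use ($L^d([0,T])\subset L^1([0,T])$ for $d\ge1$, the triangle inequality for $\tanh^{-1}$ of the pseudo-hyperbolic distance) are standard and correctly applied. The univalence of the maps is indeed irrelevant here, as you observe.
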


Any evolution family~$(\varphi_{s,t})$ is differentiable almost everywhere with respect to $t$. Besides the proof of this fact,  a characterization of all vector fields generating evolution families in the disk is established in~\cite{BCM1}. In order to give a strict statement of this result we need the following
\begin{definition}
\label{Definicion-VF} Let $d\in [1,+\infty]$. A {\sl weak holomorphic vector field of order $d$} in the unit
disc $\mathbb{D}$ is a function $G:\mathbb{D}\times\lbrack0,+\infty)\rightarrow \mathbb{C}$ with the following
properties:

\begin{enumerate}
\item[WHVF1.] For all $z\in\mathbb{D},$ the function $\lbrack
0,+\infty)\ni t\mapsto G(z,t)$ is measurable;

\item[WHVF2.] For all $t\in\lbrack0,+\infty),$ the function $
\mathbb{D}\ni z\mapsto G(z,t)$ is holomorphic;

\item[WHVF3.] For any compact set $K\subset\mathbb{D}$ and  all $T>0$ there
exists a non-negative function $k_{K,T}\in L^{d}([0,T],\mathbb{R})$ such that
\[
|G(z,t)|\leq k_{K,T}(t)
\]
for all $z\in K$ and for almost every $t\in\lbrack0,T].$
\end{enumerate}
Moreover, we say that $G$ is a {\sl (generalized) Herglotz vector
field} (of order $d$) if for almost every $t\in [0,+\infty)$ it
follows $G(\cdot, t)$ is the infinitesimal generator of a semigroup
of holomorphic functions (see Section~\ref{semi} for further details
about semigroups of analytic functions and their infinitesimal
generators).
\end{definition}

\begin{theorem}\cite[Theorems 6.2, 4.8]{BCM1}
\label{EF-bijection-VF} For any evolution family $(\v_{s,t})$ of order $d\in[1,+\infty]$ there exists a
(essentially) unique  Herglotz vector field $G(z,t)$ of order $d$ such that for all~$z\in \D$,
\begin{equation}\label{main-eq}
\frac{\de \v_{s,t}(z)}{\de t}=G(\v_{s,t}(z),t), \quad \hbox{a.e. $t\in [0,+\infty)$}.
\end{equation}
Conversely, for any Herglotz vector field $G(z,t)$ of order $d\in[1,+\infty]$
there exists a unique evolution family $(\v_{s,t})$ of order $d$  such that \eqref{main-eq} is satisfied.
\end{theorem}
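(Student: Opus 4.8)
I would treat the two implications of Theorem~\ref{EF-bijection-VF} separately, relying in both cases on two analytic tools: Vitali's theorem on locally bounded families of holomorphic functions, and Carath\'eodory's existence and uniqueness theory for the ordinary differential equation $\dot w=G(w,t)$. For the first implication, let $(\v_{s,t})$ be an $L^d$-evolution family. From EF3 --- and, for bounds uniform in $z$, from interior Cauchy estimates together with Lemma~\ref{EF-boundedness} --- the curves $t\mapsto\v_{s,t}(z)$ are locally absolutely continuous, uniformly for $z$ in compact subsets of $\D$. I would fix countable dense sets $\{z_n\}\subset\D$ and $\{s_j\}\subset[0,+\infty)$, and let $N$ be the union of the Lebesgue-null sets of non-differentiability points of $t\mapsto\v_{s_j,t}(z_n)$ over all $j,n$, together with the non-Lebesgue points of the functions $k_{K,T}$ of EF3 over a countable family of compacts $K$ and intervals $[0,T]$. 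For $t\notin N$ the difference quotients $\big(\v_{s_j,t+h}(z)-\v_{s_j,t}(z)\big)/h$ are locally uniformly bounded in $z$ by a quantity tending to $k_{K,T}(t)$ and converge at every $z_n$, so by Vitali's theorem $\de_t\v_{s_j,t}$ exists for all $z\in\D$, is holomorphic in $z$, and satisfies $|\de_t\v_{s_j,t}(z)|\le k_{K,T}(t)$ on $K$. To remove the dependence of the null set on $s$, I would fix $t\notin N$, take $s_j\uparrow s$, and use EF2 in the form $\v_{s_j,t}=\v_{s_k,t}\circ\v_{s_j,s_k}$ together with $\v_{s_j,s_k}\to\id$ and the uniform bound to see that $z\mapsto\de_t\v_{s_j,t}(z)$ is a Cauchy sequence in the compact-open topology; its limit $g_{s,t}$ is holomorphic, and dominated convergence in the identity $\v_{s_j,t}(z)=\v_{s_j,t_0}(z)+\int_{t_0}^t\de_\tau\v_{s_j,\tau}(z)\,d\tau$ shows $g_{s,t}=\de_t\v_{s,t}$ for a.e.\ $t$. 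I then \emph{define} $G(z,t):=\lim_{u\uparrow t}\de_t\v_{u,t}(z)$, the limit again existing off a null set by the same Cauchy estimate.

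\emph{The forward equation and the properties of $G$.} The identity $\de_t\v_{s,t}(z)=G(\v_{s,t}(z),t)$ then follows by writing $\v_{s,t}=\v_{u,t}\circ\v_{s,u}$ for $s<u<t$, letting $u\uparrow t$, and using $\v_{s,u}(z)\to\v_{s,t}(z)$ (continuity in the upper index, from EF3) together with $\de_t\v_{u,t}\to G(\cdot,t)$ locally uniformly. Holomorphy of $G(\cdot,t)$ is built into the construction (WHVF2); the bound WHVF3 is inherited from $k_{K,T}$; measurability in $t$ (WHVF1) follows from $G(\v_{0,t}(z),t)=\de_t\v_{0,t}(z)$ and a bootstrap through $\v_{0,t}^{-1}$ (which exists by the univalence of $\v_{0,t}$ recalled in the Introduction). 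That $G(\cdot,t)$ is the infinitesimal generator of a one-parameter semigroup for a.e.\ $t$ follows from the known characterisation of such generators (the Berkson--Porta description recalled above): $G(\cdot,t)$ is a locally uniform limit of the normalised differences $\big(\v_{t,t+h}-\id\big)/h$ of holomorphic self-maps of $\D$, and that is exactly the class of infinitesimal generators. Essential uniqueness of $G$ is immediate: any two solutions of the forward equation agree on $\v_{s,t}(\D)$, and these images exhaust $\D$ as $s\uparrow t$, so the two vector fields agree for a.e.\ $t$.

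\emph{From a Herglotz vector field to an evolution family.} Given $G$ Herglotz of order $d$, for each $(s,z)$ I would invoke Carath\'eodory's theorem --- applicable since $G$ is measurable in $t$, holomorphic hence locally Lipschitz in $z$, and locally $L^1$-dominated by WHVF3 --- to obtain a unique maximal solution $w(t)=\v_{s,t}(z)$ of $\dot w=G(w,t)$, $w(s)=z$; holomorphic dependence on the initial value makes $\v_{s,t}$ holomorphic, and reverse-time uniqueness makes it univalent, on its open domain. The crucial point --- that the solution stays in $\D$ and exists for all $t\ge s$ --- is where the hypothesis ``$G(\cdot,t)$ is a generator'' enters: the compositions $\psi^{t_m}_{\Delta_m}\circ\cdots\circ\psi^{t_1}_{\Delta_1}$ of the autonomous semigroup maps $\psi^{t}_r$ generated by the frozen fields $G(\cdot,t)$ are holomorphic self-maps of $\D$, and along refining partitions they converge --- a Chernoff/product-integral argument --- to $\v_{s,t}$; being a locally uniform limit of self-maps of $\D$, $\v_{s,t}$ is itself a self-map of $\D$ (it cannot be a unimodular constant because $\v_{s,t}(z)\to z$ as $t\downarrow s$). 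Granting this, EF1 is the initial condition, EF2 is uniqueness of solutions at time $u$, and EF3 follows from $|\de_t\v_{s,t}(z)|=|G(\v_{s,t}(z),t)|\le k_{K',T}(t)$, where $K'$ is a compact subset of $\D$ containing $\{\v_{s,t}(z):0\le s\le t\le T,\ z\in K\}$ (such $K'$ exists by the self-map property and continuity); uniqueness of $(\v_{s,t})$ is again uniqueness of ODE solutions.

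\emph{Main obstacle.} I expect two genuinely non-routine points. First, on the evolution-family side, manufacturing a single null set of times that serves all $z$ \emph{and} all $s$ simultaneously: the Vitali step disposes of the $z$-dependence, but the $s$-dependence really needs EF2 combined with the $L^d$-bound of EF3 (evaluated at Lebesgue points) to turn the derivatives $\de_t\v_{s_j,t}$ into a Cauchy net. Second, on the vector-field side, the confinement of trajectories to $\D$ for all $t\ge s$ (global existence): this is precisely the place where ``generator of a semigroup'' is indispensable, and the natural proof --- approximating the non-autonomous flow by compositions of self-maps of $\D$ --- relies on a Trotter--Kato/Chernoff-type convergence statement in the holomorphic category, whose careful verification is the technical heart of the converse implication.
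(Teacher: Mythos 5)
This theorem is not proved in the paper at all: it is quoted from \cite[Theorems 6.2, 4.8]{BCM1}, so there is no in-paper argument to compare your proposal against, and I can only measure it against the proof in the cited source. For the forward direction your reconstruction matches that proof in all essentials: a single null set of times valid for every $z$ is produced from countable dense families together with the $L^d$ bound of EF3 at its Lebesgue points and a Montel/Vitali argument, $G(\cdot,t)$ is obtained as the locally uniform a.e.\ limit of the normalized differences $(\varphi_{t,t+h}-\id)/h$, and the generator property follows because each such difference quotient is an infinitesimal generator and the set of generators is closed under locally uniform convergence. One simplification you missed: the $s$-dependence of the null set needs no Cauchy-net argument, since EF2 gives the exact identity $\partial_t\varphi_{s,t}(z)=(\partial_t\varphi_{u,t})(\varphi_{s,u}(z))$ for any $u$ in the countable dense set with $s\le u\le t$, and the point $\varphi_{s,u}(z)$ does not move with $t$; existence of the left-hand side for $t\notin N$ is then immediate.

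The genuine gap is in the converse, precisely at the step you yourself flag. Carath\'eodory theory gives local existence, uniqueness, univalence and holomorphic dependence exactly as you say, but the confinement of trajectories to $\D$ and global existence for $t\ge s$ are left resting on an unproved non-autonomous Trotter--Kato/Chernoff convergence statement for compositions of the frozen semigroups; in the $L^d$, merely-measurable-in-$t$ setting this is not a routine fact, and asserting it does not close the argument. The proof in \cite{BCM1} avoids product integrals entirely: from the Berkson--Porta form $G(z,t)=(z-\tau(t))(\overline{\tau(t)}z-1)p(z,t)$ one derives the flow-invariance inequality $\Re\big(\overline{w}\,G(w,t)\big)\le (1-|w|^2)\,|G(0,t)|$, whence any solution satisfies $\frac{d}{dt}\big(1-|w(t)|^2\big)\ge -2\big(1-|w(t)|^2\big)|G(0,t)|$ and Gronwall gives $1-|w(t)|^2\ge (1-|z|^2)\exp\big(-2\int_s^t|G(0,\xi)|\,d\xi\big)>0$; since $t\mapsto|G(0,t)|$ is locally integrable by WHVF3, the trajectory stays in a compact subset of $\D$ on bounded time intervals and WHVF3 then excludes finite-time blow-up. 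This route is more elementary, is quantitative (it is what underlies Lemma~\ref{EF-boundedness}), and is what you should substitute for the product-formula step; as written, your converse implication is an outline with its hardest step missing.
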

Here by {\it essential uniqueness} we mean that two Herglotz vector fields $G_1(z,t)$ and $G_2(z,t)$ corresponding to the same evolution family must coincide for a.e. $t\gtz$.

Herglotz vector fields can be further characterized in similar terms of the Berkson\,--\,Porta representation of infinitesimal generators.

\begin{definition}\label{def-Her-fun}
Let $d\in [1,+\infty]$. A {\sl Herglotz function of order $d$} is a function $p:\mathbb{D}\times\lbrack0,+\infty
)\to\mathbb{C}$ with the following properties:

\begin{enumerate}
\item[HF1.] For all $z\in\mathbb{D},$ the function $\lbrack0,+\infty
)\ni t \mapsto p(z,t)\in\mathbb{C}$ belongs to $L_{loc}^{d}([0,+\infty),\mathbb{C})$;

\item[HF2.] For all $t\in\lbrack0,+\infty),$ the function
$\mathbb{D}\ni z \mapsto p(z,t)\in\mathbb{C}$ is holomorphic;

\item[HF3.] For all $z\in\mathbb{D}$ and for all $t\in\lbrack0,+\infty),$ we
have $\Re p(z,t)\geq0.$
\end{enumerate}
\end{definition}

\begin{theorem}\cite[Theorems 1.2]{BCM1}\label{Berkson-Porta-for-Herglotz}
Let $G(z,t)$ be a Herglotz vector field  of order $d\in[1,+\infty]$ in the unit disc. Then there exist a (essentially)
unique measurable function  $\tau:[0,+\infty)\to \oD$  and a Herglotz function $p(z,t)$ of order $d$ such that
for all $z\in \D$
\begin{equation}\label{Herglotz-vf-main}
G(z,t)=(z-\tau(t))(\overline{\tau(t)}z-1)p(z,t),\quad \hbox{a.e. $t\in [0,+\infty)$}.
\end{equation}
Conversely, given a  measurable function  $\tau:[0,+\infty)\to \oD$  and a Herglotz function $p(z,t)$ of order
$d\in[1,+\infty]$, equation \eqref{Herglotz-vf-main} defines a Herglotz vector field of order $d$.
\end{theorem}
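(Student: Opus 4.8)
The plan is to reduce the statement to the autonomous Berkson--Porta representation of infinitesimal generators, applied pointwise in $t$ to the holomorphic maps $G(\cdot,t)$, and then to verify that the resulting data $\tau(t)$ and $p(z,t)$ inherit the correct measurability and integrability. First I would recall that for a.e. fixed $t$, the map $z\mapsto G(z,t)$ is an infinitesimal generator of a one-parameter semigroup of holomorphic self-maps of $\D$; this is precisely the meaning of $G$ being a Herglotz vector field. By the classical Berkson--Porta theorem (stated in the introduction, in the autonomous form $G(z)=(\tau-z)(1-\overline\tau z)p(z)$), there is, for each such $t$, a point $\tau(t)\in\oD$ and a holomorphic function $z\mapsto p(z,t)$ with $\Re p(z,t)\ge0$ satisfying $G(z,t)=(z-\tau(t))(\overline{\tau(t)}z-1)p(z,t)$ for all $z\in\D$. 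The point $\tau(t)$ is the Denjoy--Wolff point of the generated semigroup and is unique whenever $G(\cdot,t)\not\equiv0$; when $G(\cdot,t)\equiv0$ we may set, say, $\tau(t)=0$ and $p(\cdot,t)\equiv0$, which does not affect essential uniqueness. The converse direction is immediate: given measurable $\tau$ and a Herglotz function $p$, for a.e.\ $t$ the function $z\mapsto(z-\tau(t))(\overline{\tau(t)}z-1)p(z,t)$ is of Berkson--Porta form, hence an infinitesimal generator, and properties WHVF1--WHVF3 follow from HF1--HF2 together with the boundedness of the factor $(z-\tau(t))(\overline{\tau(t)}z-1)$ on compact subsets of $\D$ uniformly in $\tau(t)\in\oD$.

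The substantive work is therefore measurability and integrability in the forward direction. I would extract $\tau(t)$ and $p(z,t)$ from $G(\cdot,t)$ by explicit formulas so that the measurability in $t$ is manifest. Evaluating the Berkson--Porta identity and its first two $z$-derivatives at $z=0$ expresses $G(0,t)$, $G_z(0,t)$ and $G_{zz}(0,t)$ as explicit polynomial expressions in $\tau(t)$, $p(0,t)$ and $p_z(0,t)$; solving this algebraic system recovers $\tau(t)$ (as a root of a quadratic with coefficients built from these Taylor coefficients of $G(\cdot,t)$, together with the constraint $|\tau(t)|\le1$ selecting the correct root) and then $p(0,t)$. Since $t\mapsto G(z,t)$ is measurable for every fixed $z$ and $G(\cdot,t)$ is holomorphic, the Taylor coefficients $t\mapsto\frac{1}{k!}\partial_z^kG(0,t)$ are measurable (being limits of finite difference quotients, or Cauchy-integral averages of $G(\cdot,t)$ over a small circle), whence $\tau$ is measurable. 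Having $\tau$ measurable, one recovers $p$ by $p(z,t)=G(z,t)/\big((z-\tau(t))(\overline{\tau(t)}z-1)\big)$ wherever the denominator is nonzero; the only problematic locus is $z=\tau(t)$ when $\tau(t)\in\D$, and there $p(\tau(t),t)$ is the removable-singularity value, obtainable again as a local Taylor/Cauchy expression in $G$, so $p(\cdot,t)$ is holomorphic for a.e.\ $t$ and $p(z,\cdot)$ is measurable for every fixed $z$. This gives HF2 and HF1's measurability; $\Re p\ge0$ (HF3) is part of the autonomous Berkson--Porta conclusion.

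For the local $L^d$ integrability HF1, fix $z_0\in\D$ and a compact neighbourhood $K\subset\D$ of $z_0$ and of the origin. From WHVF3 we have $|G(z,t)|\le k_{K,T}(t)$ on $K$ for a.e.\ $t\le T$, with $k_{K,T}\in L^d$. On one hand, Cauchy estimates on $K$ bound $|G_z(0,t)|$ and $|G_{zz}(0,t)|$ by a constant multiple of $k_{K,T}(t)$, so the coefficients entering the quadratic for $\tau(t)$ are in $L^d$; but since $\tau(t)\in\oD$ is already bounded, no integrability issue arises for $\tau$ itself. On the other hand, to bound $p(z_0,t)$ I would use that the factor $(z_0-\tau(t))(\overline{\tau(t)}z_0-1)$ is bounded below in modulus on a neighbourhood of $z_0$ \emph{after excising} the bad set $\{t:\tau(t)\text{ is near }z_0\}$; on that bad set I instead evaluate the Berkson--Porta identity at some other fixed point $z_1\in K$ with $|z_1-z_0|$ comparable to the diameter of $K$, so that $z_1$ stays bounded away from $\tau(t)$, and deduce $|p(z_0,t)|\le C\big(|p(z_1,t)|+\ldots\big)$ via the Harnack-type / bounded-ratio estimates for functions with $\Re p\ge0$ (the ratio $|p(z_0,t)|/(1+|p(z_1,t)|)$ is bounded by a constant depending only on $z_0,z_1$). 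Covering $[0,T]$ by these two regimes yields $|p(z_0,t)|\le C\,k_{K,T}(t)+C'$ for a.e.\ $t\le T$, hence $p(z_0,\cdot)\in L^d_{loc}$. Finally, essential uniqueness of $(\tau,p)$ follows from the pointwise (in $t$) uniqueness in the autonomous Berkson--Porta theorem: for a.e.\ $t$ with $G(\cdot,t)\not\equiv0$ the Denjoy--Wolff point $\tau(t)$ and the factor $p(\cdot,t)$ are uniquely determined by $G(\cdot,t)$. The main obstacle I anticipate is precisely this last integrability argument for $p$ near the zero set of the quadratic factor, i.e.\ organizing the excision of the set where $\tau(t)$ approaches the chosen evaluation point; everything else is a routine transfer of the autonomous theorem through measurable selections.
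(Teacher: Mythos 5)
First, a point of reference: the paper does not prove this statement at all — it is quoted from \cite{BCM1} (Theorem 1.2 there), so there is no internal proof to compare against. Judged on its own terms, your overall strategy is the natural one and matches the cited source in outline: apply the autonomous Berkson--Porta theorem to $G(\cdot,t)$ for a.e.\ fixed $t$, then establish measurability of $\tau$ and local $L^d$-integrability of $t\mapsto p(z,t)$. Your integrability argument for $p$ (excising the set of $t$ where $\tau(t)$ is near the evaluation point and transferring the bound from a second base point via the Harnack-type estimate for functions of nonnegative real part) is sound, and the converse direction is essentially routine, with one small omission: WHVF3 demands a single $k_{K,T}$ valid for \emph{all} $z$ in a compact $K$, while HF1 is a pointwise-in-$z$ condition, so you need the same Harnack estimate $|p(z,t)|\le C(K)\,|p(0,t)|$ to bridge the two.

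The genuine gap is your mechanism for the measurability of $\tau$. Expanding $(z-\tau)(\overline{\tau}z-1)=\overline{\tau}z^2-(1+|\tau|^2)z+\tau$ and differentiating the identity $G(z,t)=(z-\tau(t))(\overline{\tau(t)}z-1)p(z,t)$ at $z=0$ gives $G(0,t)=\tau(t)p(0,t)$, $\tfrac{\partial G}{\partial z}(0,t)=\tau(t)\tfrac{\partial p}{\partial z}(0,t)-(1+|\tau(t)|^2)p(0,t)$, and so on: every new equation introduces a new unknown Taylor coefficient of $p(\cdot,t)$, so the system formed by the first $k$ Taylor coefficients of $G(\cdot,t)$ has $k$ equations and $k+1$ unknowns for every $k$. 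There is no quadratic (or any finite polynomial system) in finitely many Taylor coefficients of $G(\cdot,t)$ at a single point whose root is $\tau(t)$; the Berkson--Porta point is a global datum of $G(\cdot,t)$. Concretely, $\tau(t)$ is the unique zero of $G(\cdot,t)$ in $\D$ when such a zero exists (as $p(\cdot,t)$ is zero-free unless $G(\cdot,t)\equiv0$), and otherwise it is the Denjoy--Wolff point on $\partial\D$ of the generated semigroup. Measurability must therefore be argued differently: on the measurable set of $t$ for which $G(\cdot,t)$ has an interior zero one can extract that zero by an argument-principle/residue formula over circles avoiding it, while on the complementary set one must realize the boundary point $\tau(t)$ as a limit of measurable quantities (via the flow generated by $G(\cdot,t)$, or via Borel dependence of $\tau$ on the generator in the compact-open topology together with measurability of $t\mapsto G(\cdot,t)$ as a $\mathrm{Hol}(\D,\C)$-valued map). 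This is precisely the delicate step of the cited theorem, and your proposal does not contain a working argument for it.
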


There is thus an (essentially) one-to-one correspondence between evolution families $(\v_{s,t})$ of order $d\in[1,+\infty]$, Herglotz vector fields $G(z,t)$ of order $d$, and couples $(p,\tau)$ of Herglotz functions $p(z,t)$ of
order $d$ and measurable functions $\tau:[0,+\infty)\to \oD$. In what follows we say that the couple $(p,\tau)$
is the {\sl Berkson\,--\,Porta data} for $(\v_{s,t})$.

Now we state and prove some new assertions concerning evolution families, which we use in the proof of the main results.

Denote by $AC^d(X,Y)$, $X\subset \R$, $d\in[1,+\infty]$, the class of all locally absolutely continuous
functions $f:X\to Y$ such that the derivative $f'$ belongs to $L_{\rm loc}^d(X)$.
\begin{proposition}
\label{EF-properties} Let $(\varphi_{s,t})$ be an evolution family of order $d\in[1,+\infty]$. Then the following statements hold:
\begin{enumerate}
\item For any compact set $K\subset\mathbb{D}$ and all $T>0$ there exists a
non-negative function $k_{K,T}\in L^{d}([0,T],\mathbb{R})$ such that
\[
|\varphi_{s,u}(z)-\varphi_{s,t}(z)|\leq\int_{u}^{t}k_{K,T}(\xi)d\xi
\]
for all $0\leq s\leq u\leq t\leq T$ and all $z\in K$.
\item For every $z\in \mathbb{D}$ the maps  $a(t):=\v_{0,t}(z)$ and
$b(t):=\v'_{0,t}(z)$ belong to $AC^d([0,+\infty),\C )$ and $b(t)\neq 0$ for all  $t\in [0,+\infty)$.
\end{enumerate}
\end{proposition}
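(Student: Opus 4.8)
\textbf{Proof plan for Proposition~\ref{EF-properties}.}

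For part (1), the plan is to upgrade the pointwise bound EF3 of Definition~\ref{def-ev} to a locally uniform one on compact sets. First I would fix a compact $K\subset\D$ and $T>0$, choose $r\in(0,1)$ with $K\subset\{|z|\le r\}$, and invoke Lemma~\ref{EF-boundedness} to get $R=R(r,T)<1$ with $|\v_{s,t}(z)|\le R$ for all $0\le s\le t\le T$, $|z|\le r$. The key device is the semigroup property EF2: writing $\v_{s,t}=\v_{u,t}\circ\v_{s,u}$ and setting $w:=\v_{s,u}(z)$, I have $|\v_{s,u}(z)-\v_{s,t}(z)|=|w-\v_{u,t}(w)|$ with $|w|\le R$, so it suffices to control $|\v_{u,t}(w)-w|=|\v_{u,t}(w)-\v_{u,u}(w)|$ for $|w|\le R$. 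Now EF3 gives such a bound pointwise, with integrand $k_{w,T}$ depending on $w$; to make it locally uniform I would use a Cauchy-estimate/normal-families argument: the functions $t\mapsto\v_{u,t}(w)$ are, for fixed $u$, equibounded (by $R'=R(R,T)$ applied on the disk of radius $R$), and differentiability a.e.\ together with EF3 lets one pass to a single dominating function on $\{|w|\le R\}$. Concretely, the cleanest route is to anticipate Theorem~\ref{EF-bijection-VF}: $\partial\v_{s,t}(z)/\partial t=G(\v_{s,t}(z),t)$ a.e., with $G$ a weak holomorphic vector field of order $d$; then for $z\in K$,
\begin{equation*}
|\v_{s,u}(z)-\v_{s,t}(z)|\le\int_u^t|G(\v_{s,\xi}(z),\xi)|\,d\xi\le\int_u^t k_{\overline{\D}_R,T}(\xi)\,d\xi,
\end{equation*}
where $\overline{\D}_R=\{|z|\le R\}$ and WHVF3 supplies $k_{\overline{\D}_R,T}\in L^d([0,T])$; this $k$ works for all $z\in K$ simultaneously. (One must check that Theorem~\ref{EF-bijection-VF} is not logically downstream of this proposition; if it is, the normal-families argument above is the fallback.)

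For part (2), fix $z\in\D$. Local absolute continuity of $a(t)=\v_{0,t}(z)$ with $a'\in L^d_{\rm loc}$ is immediate from part (1): EF3 (or part (1) with $K=\{z\}$) shows $|a(t)-a(u)|\le\int_u^t k\,d\xi$, which is exactly the statement that $a$ is the indefinite integral of an $L^d_{\rm loc}$ function, hence $a\in AC^d$. For $b(t)=\v_{0,t}'(z)$, I would represent the derivative by the Cauchy formula: pick $\rho\in(0,1)$ with $|z|<\rho$ and write
\begin{equation*}
b(t)=\v_{0,t}'(z)=\frac{1}{2\pi i}\int_{|\zeta|=\rho}\frac{\v_{0,t}(\zeta)}{(\zeta-z)^2}\,d\zeta .
\end{equation*}
Then for $0\le u\le t\le T$,
\begin{equation*}
|b(t)-b(u)|\le\frac{\rho}{(\rho-|z|)^2}\,\max_{|\zeta|=\rho}|\v_{0,t}(\zeta)-\v_{0,u}(\zeta)|\le C\int_u^t k_{\{|\zeta|\le\rho\},T}(\xi)\,d\xi,
\end{equation*}
using part (1) on the compact set $\{|\zeta|\le\rho\}$, which gives $b\in AC^d$.

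It remains to prove $b(t)=\v_{0,t}'(z)\ne0$ for all $t$. Here the algebraic structure EF1--EF2 does the work, combined with the univalence of each $\v_{s,t}$ (stated just before Lemma~\ref{EF-boundedness}, from \cite[Corollary~6.3]{BCM1}). From EF2 with $s=0$, $u=0$ replaced appropriately: $\v_{0,t}=\v_{s,t}\circ\v_{0,s}$ for $0\le s\le t$, so by the chain rule $\v_{0,t}'(z)=\v_{s,t}'(\v_{0,s}(z))\cdot\v_{0,s}'(z)$. Taking $s=t$ is vacuous; instead, to see non-vanishing at a given $t$, note $\v_{0,t}$ is univalent on $\D$, hence $\v_{0,t}'$ has no zeros in $\D$ at all (a univalent holomorphic function has nowhere-vanishing derivative), and $z\in\D$, so $b(t)=\v_{0,t}'(z)\ne0$. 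This also handles $t=0$ since $\v_{0,0}=\mathrm{id}_\D$. The main obstacle in the whole proposition is the locally-uniform integrability upgrade in part (1)—making the dominating function independent of the point—and the cleanest resolution is the vector-field bound via Theorem~\ref{EF-bijection-VF}/WHVF3; the non-vanishing of $b$ is then a one-line consequence of univalence once part (1) has delivered the regularity.
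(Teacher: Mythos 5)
Your proposal is correct and follows essentially the same route as the paper: both parts rest on the Herglotz vector field from Theorem~\ref{EF-bijection-VF} (an imported result from \cite{BCM1}, so there is no circularity), giving $|\v_{s,u}(z)-\v_{s,t}(z)|\le\int_u^t|G(\v_{s,\xi}(z),\xi)|\,d\xi$ dominated via WHVF3 on the compact set supplied by Lemma~\ref{EF-boundedness}, with the Cauchy integral representation of $\v_{0,t}'$ handling $b\in AC^d$ and univalence of $\v_{0,t}$ giving $b(t)\ne0$. The only cosmetic difference is that you bound the increments $|b(t)-b(u)|$ directly while the paper differentiates under the contour integral to bound $|b'(t)|$ a.e.; these are equivalent.
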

\begin{proof} By Theorem \ref{EF-bijection-VF}, there is a Herglotz vector
field of order $d$ such that for all $z\in\D$
\[
\frac{\partial \v_{s,t}(z)}{\partial t}=G(\v_{s,t}(z),t),\quad \ \text{a.e. } t\in[0,+\infty).
\]
{\it Proof of (1).} By the very definition of Herglotz vector field there exists a non-negative function $k_{K,T}\in
L^{d}([0,T],\R)$ such that
\begin{equation}\label{VF-assocated-with-EF}
    |G(z,t)|\leq k_{K,T}(t)
\end{equation}
for all $z\in K$ and for almost every $t\in\lbrack0,T].$ Therefore, statement~(1) is an easily consequence of  the
following inequalities
\[ |\varphi_{s,u}(z)-\varphi_{s,t}(z)| =
  \left|\int_u^t\frac{\partial \v_{s,\xi}(z)}{\partial \xi}d\xi\right|
   = \left|\int_u^tG(\v_{s,\xi}(z),\xi)d\xi\right|
   \leq  \int_u^tk_{K,T}(\xi)d\xi .
\]
{\it Proof of (2).} From the very definition of Herglotz vector field, evolution family of order $d$, and inequality~\eqref{VF-assocated-with-EF} it follows that the map $a$ belongs to $AC^d([0,+\infty),\C )$. Moreover, since the
functions $\v_{s,t}$ are univalent \cite[Corollary 6.3]{BCM1}, we have $b(t)\neq 0$ for all $t$. Fix $T\in(0,+\infty)$
and $z\in\D$. There is $R<1$ such that $|\varphi_{0,t}(z)| < R$ for all $t\in[0,T]$. Then there is $k_{R,T}\in
L^{d}([0,T],\R)$ such that
\begin{equation*}
    |G(w,t)|\leq k_{R,T}(t)
\end{equation*}
for all $|w|\leq R$ and for almost every $t\in\lbrack0,T].$ Therefore,
\begin{eqnarray*}
|  b'(t) |&=& \left|\frac{1}{2\pi}\frac{\partial}{\partial t}\left(
\int_{C(0,R)^+}\frac{\v_{0,t}(w)}{w^2}dw\right)\right|
=\left|\frac{1}{2\pi} \int_{C(0,R)^+}\frac{\partial}{\partial t}\left(\frac{\v_{0,t}(w)}{w^2}\right)dw\right|\\
   &=&
\left|\frac{1}{2\pi}\left( \int_{C(0,R)^+}G(\v_{0,t}(w),t)dw\right)\right|
   \leq  \frac{1}{R} k_{R,T}(t)
\end{eqnarray*}
for almost every $t\in\lbrack0,T]$, where $C(0,R)^+$ stands for the positively oriented circle of radius~$R$ centered at the point~$z=0$.  This implies that $b$ belongs to $AC^d([0,+\infty),\C )$ and therefore completes the proof.
\end{proof}

It appears to be useful to consider evolution families that consists of automorphisms of~$\UD$. The following example is the most general form of such evolution families.
\begin{example}
\label{EF-automorphims} Take two functions $a\in AC^d([0,+\infty),\D)$ and $b\in AC^d([0,+\infty),\partial\D)$
and write
$$
h_t(z):=\frac{b(t)z+a(t)}{1+b(t)\overline{ a(t)}z} \ \text {for all\ } t\geq0 \ \text {and all } z\in\D.
$$
Then $(h_t\circ h_s^{-1})$ and $(h_t^{-1}\circ h_s)$ are evolution families of order $d$. Indeed, it is clear
that both families of functions satisfy EF1 and EF2. Moreover, for any $T<+\infty$ and $z\in\D$  there exists $R<1$ such that
\[
|h_s^{-1}(z)|=\left|\overline{b(s)}\frac{z-a(s)}{1-\overline{a(s)}z}\right|\leq R,\quad 0\le s\le T.
\]
Denote $w=h^{-1}_s(z)$. Then we have
\begin{eqnarray*}
  |h_t\circ h_s^{-1}(z)-h_u\circ h_s^{-1}(z)| &=&  |h_t(w)-h_u(w)|
   = \left|\frac{b(t)w+a(t)}{1+b(t)\overline{ a(t)}w}-
   \frac{b(u)w+a(u)}{1+b(u)\overline{ a(u)}w}\right| \\
   &\leq& \frac{2}{(1-R)^2} \left( |b(t)-b(u)|+|a(t)-a(u)|\right)
\end{eqnarray*}
for all $0\leq s\leq u\leq t\leq T$. These inequalities and the hypothesis on $a$ and $b$ imply that the
family $(h_t\circ h_s^{-1})$ satisfies EF3. Similarly, the family $(h_t^{-1}\circ h_s)$ satisfies EF3 as well.
\end{example}

The following lemma allows us to transform evolution families by means of  time-dependent changes of variable in the unit disk.
\begin{lemma}
\label{EF-lemma-decomposition} Let $(\psi_{s,t})$ be an evolution family or order $d\in[1,+\infty]$ and take two functions
$a\in AC^d([0,+\infty),\D)$ and $b\in AC^d([0,+\infty),\partial\D)$. Write $\v_{s,t}=h_t\circ\psi_{s,t}\circ
h_s^{-1}$ and $\tilde{\v}_{s,t}=h_t^{-1}\circ\psi_{s,t} \circ h_s$, where
$$
h_t(z):=\frac{b(t)z+a(t)}{1+b(t)\overline{a(t)}z} \ \text {for all\ } t\geq0 \ \text {and all\ } z\in\D.
$$
Then $(\v_{s,t})$ and $(\tilde{\v}_{s,t})$ are evolution families of order $d$.
\end{lemma}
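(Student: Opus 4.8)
The plan is to verify conditions EF1--EF3 of Definition~\ref{def-ev} for $(\varphi_{s,t})$ directly, using that each $h_t$ is an automorphism of~$\D$ and recycling the elementary estimates from Example~\ref{EF-automorphims}, and then to deduce the statement for $(\tilde\varphi_{s,t})$ by symmetry. Since $h_t^{-1}(z)=\overline{b(t)}(z-a(t))/(1-\overline{a(t)}z)$ has the same shape as $h_t$, condition EF1 is immediate from $\psi_{s,s}=\mathrm{id}_\D$, while EF2 follows from
\[
\varphi_{u,t}\circ\varphi_{s,u}=h_t\circ\psi_{u,t}\circ h_u^{-1}\circ h_u\circ\psi_{s,u}\circ h_s^{-1}=h_t\circ\psi_{s,t}\circ h_s^{-1}=\varphi_{s,t},
\]
by EF2 for~$(\psi_{s,t})$. (Conceptually one could instead transport the Herglotz vector field of $(\psi_{s,t})$ through the time-dependent change of variable and appeal to Theorem~\ref{EF-bijection-VF}, but the hands-on route is shorter given what is already available.)

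The substance is EF3. Fix $z\in\D$ and $T>0$. As in Example~\ref{EF-automorphims}, continuity of $a$ and $b$ yields $R_0=R_0(z,T)<1$ with $|h_s^{-1}(z)|\le R_0$ for all $s\in[0,T]$; put $K:=\{w:|w|\le R_0\}$. Applying Lemma~\ref{EF-boundedness} to $(\psi_{s,t})$ gives $R=R(R_0,T)<1$ such that $|\psi_{s,t}(w)|\le R$ whenever $0\le s\le t\le T$ and $w\in K$. For $0\le s\le u\le t\le T$ set $w:=h_s^{-1}(z)\in K$ and $\zeta:=\psi_{s,t}(w)$, so $|\zeta|\le R$, and write
\[
\varphi_{s,t}(z)-\varphi_{s,u}(z)=\big(h_t(\zeta)-h_u(\zeta)\big)+\big(h_u(\psi_{s,t}(w))-h_u(\psi_{s,u}(w))\big).
\]
For the first summand, the computation of Example~\ref{EF-automorphims} carried out at the point $\zeta$ (with $|\zeta|\le R$) yields $|h_t(\zeta)-h_u(\zeta)|\le\frac{2}{(1-R)^2}\big(|a(t)-a(u)|+|b(t)-b(u)|\big)\le\frac{2}{(1-R)^2}\int_u^t\big(|a'(\xi)|+|b'(\xi)|\big)\,d\xi$, using $a,b\in AC^d$. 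For the second summand, on the convex set $\{|\zeta|\le R\}$ one has $|h_u'(\zeta)|=(1-|a(u)|^2)/|1+b(u)\overline{a(u)}\zeta|^2\le(1-R)^{-2}$ uniformly in $u\in[0,T]$, hence $|h_u(\psi_{s,t}(w))-h_u(\psi_{s,u}(w))|\le(1-R)^{-2}|\psi_{s,t}(w)-\psi_{s,u}(w)|\le(1-R)^{-2}\int_u^t k_{K,T}(\xi)\,d\xi$, where $k_{K,T}\in L^d([0,T],\R)$ is the function provided by Proposition~\ref{EF-properties}(1) for $(\psi_{s,t})$. Adding the two bounds shows that EF3 holds for $(\varphi_{s,t})$ with $k_{z,T}:=(1-R)^{-2}\big(2|a'|+2|b'|+k_{K,T}\big)$, a non-negative member of $L^d([0,T],\R)$.

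Finally, for $(\tilde\varphi_{s,t})$ one observes that $h_s^{-1}$ is again of the prescribed form, $h_s^{-1}(z)=(\tilde b(s)z+\tilde a(s))/(1+\tilde b(s)\overline{\tilde a(s)}z)$ with $\tilde b:=\overline{b}\in AC^d([0,+\infty),\partial\D)$ and $\tilde a:=-\overline{b}\,a\in AC^d([0,+\infty),\D)$ (note $|\tilde a|=|a|<1$, and a product of bounded locally absolutely continuous functions whose derivatives lie in $L^d_{\mathrm{loc}}$ again has this property). Thus $\tilde\varphi_{s,t}=h_t^{-1}\circ\psi_{s,t}\circ h_s=\tilde h_t\circ\psi_{s,t}\circ\tilde h_s^{-1}$ with $\tilde h_t:=h_t^{-1}$, and the case already established applies verbatim. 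The delicate point throughout is the second summand in the decomposition: the estimate for $(\psi_{s,t})$ has to be uniform in the base point $w=h_s^{-1}(z)$ as $s$ ranges over $[0,T]$, which is exactly why one uses the compact-set version in Proposition~\ref{EF-properties}(1) --- not EF3 at a single point --- after localizing both $h_s^{-1}(z)$ and $\psi_{s,t}(w)$ by means of Lemma~\ref{EF-boundedness}.
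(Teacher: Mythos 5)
Your proof is correct and follows essentially the same route as the paper's: both split the increment $\varphi_{s,t}(z)-\varphi_{s,u}(z)$ by the triangle inequality into a term measuring the time-variation of $h$ and a term measuring the time-variation of $\psi$, localize everything with Lemma~\ref{EF-boundedness}, and invoke the uniform compact-set estimate of Proposition~\ref{EF-properties}(1) rather than EF3 at a single point. The only cosmetic differences are that the paper bounds the $h$-increment by treating $(h_t\circ h_s^{-1})$ as an evolution family via Example~\ref{EF-automorphims} (and first shifts the starting time to $u$ using EF2) instead of integrating $|a'|+|b'|$ directly, and that it leaves the family $(\tilde{\varphi}_{s,t})$ to the reader, which you handle explicitly through the reparametrization $\tilde{a}=-\overline{b}\,a$, $\tilde{b}=\overline{b}$.
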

\begin{proof} We present the proof for the family $(\v_{s,t})$ and leave to
the reader the one for the family $(\tilde{\v}_{s,t})$ which is quite similar.

It is clear that the functions $(\v_{s,t})$ satisfy properties EF1 and EF2. So we just have to prove that this
family of functions satisfy EF3.

Notice that, by Example \ref{EF-automorphims}, $(h_t\circ h_s^{-1})$ is an evolution family. Fix $z\in\D$ and
$T\in(0,\infty)$. By Lemma \ref{EF-boundedness} and the continuity of the functions $a$ and $b$, there exists a
number $R<1$ such that
\[
|\psi_{s,t}\circ h_s^{-1}(z)|\leq R \ \text{ and \ } |\v_{s,t}(z)|=|h_t\circ\psi_{s,t}\circ h_s^{-1}(z)|\leq R
\]
for all $0\leq s\leq t \leq T$. Therefore, by Proposition \ref{EF-properties} applied to the evolution families
$(h_t\circ h_s^{-1})$ and $(\psi_{s,t})$, there are two functions $k_1, k_2\in L^{d}([0,T],\mathbb{R})$ such
that
\begin{equation}\label{*}
|\psi_{s,u}(w)-\psi_{s,t}(w)|\leq\int_{u}^{t}k_1(\xi)d\xi \ \text {and \ } |h_u\circ h_s^{-1}(w)-h_t\circ
h_s^{-1}(w)|\leq\int_{u}^{t}k_2(\xi)d\xi
\end{equation}
for all $0\leq s\leq u\leq t\leq T$ and whenever $|w|\leq R$. Moreover, there is a positive number $M$ such that
\begin{equation}\label{**}
|h_t(w_1)-h_t(w_2)|\leq M|w_1-w_2|
\end{equation}
whenever $t\in[0,T]$ and $|w_1|,|w_2|\leq R$. Now, let us fix $0\leq s\leq u\leq t\leq T$ and write
$z_1=\psi_{s,u}(h_s^{-1}(z))$ and $z_2=h_u(z_1)$. Note that $|z_1|,|z_2|\leq R$. The following chain of
inequalities (where we use \eqref{*} and \eqref{**}) allows us to complete the proof
\begin{eqnarray*}
|\v_{s,t}(z)-\v_{s,u}(z)| &=&|\v_{u,t}(\v_{s,u}(z))-\v_{s,u}(z)|
=|h_t\circ\psi_{u,t}(z_1)-h_u(z_1)| \\
&\leq & |h_t\circ\psi_{u,t}(z_1)-h_t(z_1)|+|h_t(z_1)-h_u(z_1)|\\
&\leq & M|\psi_{u,t}(z_1)-z_1|+|h_t(z_1)-h_u(z_1)|\\
&= & M|\psi_{u,t}(z_1)-\psi_{u,u}(z_1)|+|h_t\circ h_u^{-1}(z_2)-h_u\circ h_u^{-1}(z_2)|\\
&\leq& \int_{u}^{t}(Mk_1(\xi)+k_2(\xi))d\xi.
\end{eqnarray*}
\end{proof}

Now we use Lemma~\ref{EF-lemma-decomposition} in order to establish a kind of  decomposition for a given evolution family.
\begin{proposition}
\label{EF-decomposition} Let $(\v_{s,t})$ be an evolution family of
order $d\in[1,+\infty]$. Then there exist unique $a\in
AC^d([0,+\infty),\D)$, $b\in AC^d([0,+\infty),\partial\D)$,
 and $\psi_{s,t}:\D\to\D$, $0\leq s\leq t<+\infty$, such that the following assertions hold
\begin{enumerate}
  \item $a(0)=0$, $b (0)=1$,
  \item $(\psi_{s,t})$ is an evolution family of order $d$ such that $\psi_{s,t}(0)=0$
  and $\psi'_{s,t}(0)>0$ for all $0\leq s\leq t$,
  \item $\v_{s,t}=h_t\circ\psi_{s,t}\circ h_s^{-1}$ for all $0\leq s\leq t<+\infty$, where
$$
h_t(z):=\frac{b(t)z+a(t)}{1+b(t)\overline{a(t)}z}, \quad t\geq0,~~ z\in\D.
$$
\end{enumerate}
\end{proposition}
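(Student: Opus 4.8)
The plan is to read off the two scalar functions $a$ and $b$ from the behaviour of $\varphi_{0,t}$ at the origin, and then let Lemma~\ref{EF-lemma-decomposition} do the work. Concretely, I set $a(t):=\varphi_{0,t}(0)$ and $b(t):=\varphi_{0,t}'(0)\big/|\varphi_{0,t}'(0)|$. By Proposition~\ref{EF-properties}(2) the maps $t\mapsto\varphi_{0,t}(0)$ and $t\mapsto\varphi_{0,t}'(0)$ belong to $AC^d([0,+\infty),\C)$ and the latter is nowhere zero; since $\varphi_{0,t}$ maps $\D$ into $\D$ we get $a(t)\in\D$, and $|b(t)|=1$ by construction. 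For the regularity of $b$ I would observe that on every interval $[0,T]$ the image $\{\varphi_{0,t}'(0):0\le t\le T\}$ is a compact subset of $\C\setminus\{0\}$, hence contained in an annulus $\{0<r\le|\zeta|\le\rho\}$ on which $\zeta\mapsto\zeta/|\zeta|$ is of class $C^1$, hence Lipschitz; composing a Lipschitz map with an $AC^d$ function yields an $AC^d$ function, so $b\in AC^d([0,+\infty),\partial\D)$. Since $\varphi_{0,0}=id_\D$ we have $a(0)=0$ and $b(0)=1$, which is~(1), and consequently $h_0=id_\D$. Finally I would \emph{define} $\psi_{s,t}:=h_t^{-1}\circ\varphi_{s,t}\circ h_s$, so that~(3) holds by construction; each $\psi_{s,t}$ is a holomorphic self-map of $\D$ because $h_s,h_t^{-1}\in\Aut(\D)$.

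It remains to verify~(2). That $(\psi_{s,t})$ is an evolution family of order $d$ is exactly Lemma~\ref{EF-lemma-decomposition} applied to $(\varphi_{s,t})$ and the pair $(a,b)$ just constructed. For the normalization at the origin, the cocycle identity EF2 gives $\varphi_{s,t}(a(s))=\varphi_{s,t}\big(\varphi_{0,s}(0)\big)=\varphi_{0,t}(0)=a(t)$, and since $h_s(0)=a(s)$ and $h_t^{-1}(a(t))=0$ this forces $\psi_{s,t}(0)=0$ for all $0\le s\le t$. For the sign of $\psi_{s,t}'(0)$ I first treat $s=0$: using $h_0=id_\D$ and the Möbius computation $(h_t^{-1})'(a(t))=\overline{b(t)}/(1-|a(t)|^2)$, the chain rule yields $\psi_{0,t}'(0)=\overline{b(t)}\,\varphi_{0,t}'(0)/(1-|a(t)|^2)=|\varphi_{0,t}'(0)|/(1-|a(t)|^2)>0$ by the choice of $b(t)$. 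For general $s$ I apply EF2 to the evolution family $(\psi_{s,t})$, namely $\psi_{0,t}=\psi_{s,t}\circ\psi_{0,s}$ together with $\psi_{0,s}(0)=0$, to get $\psi_{0,t}'(0)=\psi_{s,t}'(0)\,\psi_{0,s}'(0)$, whence $\psi_{s,t}'(0)=\psi_{0,t}'(0)/\psi_{0,s}'(0)>0$. This establishes existence.

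For uniqueness, suppose $(\tilde a,\tilde b,\tilde\psi)$ is another triple satisfying (1)--(3), with $\tilde h_t(z):=(\tilde b(t)z+\tilde a(t))/(1+\tilde b(t)\overline{\tilde a(t)}z)$. Evaluating $\varphi_{0,t}=\tilde h_t\circ\tilde\psi_{0,t}\circ\tilde h_0^{-1}$ at $z=0$ and using $\tilde h_0=id_\D$ (from $\tilde a(0)=0$, $\tilde b(0)=1$) together with $\tilde\psi_{0,t}(0)=0$ gives $\varphi_{0,t}(0)=\tilde h_t(0)=\tilde a(t)$, so $\tilde a=a$; differentiating at $z=0$ gives $\varphi_{0,t}'(0)=\tilde h_t'(0)\,\tilde\psi_{0,t}'(0)=\tilde b(t)\,(1-|\tilde a(t)|^2)\,\tilde\psi_{0,t}'(0)$, and since the last two factors are positive reals this forces $\tilde b(t)=\varphi_{0,t}'(0)/|\varphi_{0,t}'(0)|=b(t)$. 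Hence $\tilde h_t=h_t$ for every $t$, and therefore $\tilde\psi_{s,t}=\tilde h_t^{-1}\circ\varphi_{s,t}\circ\tilde h_s=\psi_{s,t}$, completing the proof. The only genuinely delicate point in the whole argument is the $AC^d$-regularity of $b$ (handled as above via the chain rule for absolutely continuous functions composed with a $C^1$ map on a compact set bounded away from the origin); everything else is bookkeeping with Möbius maps and the algebraic identities EF1--EF2.
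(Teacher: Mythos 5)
Your proof is correct and follows essentially the same route as the paper: the same choice $a(t)=\varphi_{0,t}(0)$, $b(t)=\varphi_{0,t}'(0)/|\varphi_{0,t}'(0)|$, the same appeal to Proposition~\ref{EF-properties} and Lemma~\ref{EF-lemma-decomposition}, and the same normalization and uniqueness arguments via EF2. The only difference is that you spell out why $b\in AC^d$ (composing the $AC^d$ map $t\mapsto\varphi_{0,t}'(0)$ with $\zeta\mapsto\zeta/|\zeta|$, Lipschitz on a compact annulus avoiding the origin), a small step the paper leaves implicit.
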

\begin{proof} Write $a (t)= \v_{0,t}(0)$ and
$b (t)=\frac{\v'_{0,t}(0)}{|\v'_{0,t}(0)|}$. By Proposition
\ref{EF-properties}, $a\in AC^d([0,+\infty),\D)$ and $b\in
AC^d([0,+\infty),\partial\D)$. Now define $h_t$ as in the statement
of the proposition and take $\psi_{s,t}=h_t^{-1}\circ\v_{s,t}\circ
h_s$. Notice that $h_0$ is the identity, $h_t(0)=a(t)$ and
$h'_t(0)=b(t)(1-|a (t)|^2)$. By Lemma \ref{EF-lemma-decomposition},
the family $(\psi_{s,t})$ is an evolution family of order $d$.
Moreover, from the very definition of $a$ it follows that
$\psi_{0,t}(0)=0$ for all $t$. Using EF2, we deduce that
$\psi_{s,t}(0)=0$ for all $s\leq t$. In a similar way, we show that
$\psi'_{0,t}(0)=\frac{|\v'_{0,t}(0)|}{1-|a(t)|^2}>0$ for all $t$ and
then $\psi'_{s,t}(0)>0$ for all $0\leq s\leq t$.

The uniqueness is clear because from the equality $\v_{s,t}=h_t\circ\psi_{s,t}\circ h_s^{-1}$ we deduce that $a
(t)= h_t(0)=h_t(\psi_{0,t}(0))=\v_{0,t}(0)$, $b (t)=\frac{\v'_{0,t}(0)}{|\v'_{0,t}(0)|}$ (which defines the functions
$h_t$ uniquely) and $\psi_{s,t}=h_t^{-1}\circ\v_{s,t}\circ h_s$. The proof is now complete.
\end{proof}

The following result gives the converse of Proposition \ref{EF-properties}(2).
\begin{proposition}\label{EF4_prop}
 Let $(\varphi_{s,t})$ be a family of holomorphic self-maps of~$\D$.
Suppose that conditions EF1 and EF2 are fulfilled. Then condition EF3 is equivalent to the following condition:
\begin{itemize}
\item[EF4.]  The maps  $a(t):=\v_{0,t}(0)$ and
$b(t):=\v'_{0,t}(0)$ belong to $AC^d([0,+\infty),\C )$ and $b(t)\neq 0$ for all  $t\in [0,+\infty)$.
\end{itemize}
\end{proposition}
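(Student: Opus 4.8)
The statement has two implications. The implication EF3 $\Rightarrow$ EF4 requires only an invocation: if EF3 holds then $(\varphi_{s,t})$ is an evolution family of order~$d$, so Proposition~\ref{EF-properties}(2) applied with $z=0$ says exactly that $a$ and $b$ belong to $AC^d([0,+\infty),\C)$ with $b$ non-vanishing. The real content is EF4 $\Rightarrow$ EF3, and I would organize it in three steps: a normalization via a time-dependent M\"obius conjugation, a direct proof of EF3 in the normalized case, and a transfer of the conclusion back through Lemma~\ref{EF-lemma-decomposition}.

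\emph{Step 1 (reduction to the normalized case).} Imitating the construction in the proof of Proposition~\ref{EF-decomposition}, set $a(t)=\varphi_{0,t}(0)$, $b(t)=\varphi'_{0,t}(0)$ (so $a(0)=0$, $b(0)=1$, $|a(t)|<1$ by the Schwarz lemma, $b(t)\ne0$ by hypothesis), $\tilde b:=b/|b|$, $h_t(z)=\bigl(\tilde b(t)z+a(t)\bigr)/\bigl(1+\tilde b(t)\overline{a(t)}z\bigr)$, and $\psi_{s,t}:=h_t^{-1}\circ\varphi_{s,t}\circ h_s$. A routine computation using EF1, EF2 (and $h_0=\mathrm{id}$, $h_t(0)=a(t)$, $h_t'(0)=\tilde b(t)(1-|a(t)|^2)$) shows that $(\psi_{s,t})$ satisfies EF1, EF2, that $\psi_{s,t}(0)=0$ and $\psi'_{s,t}(0)>0$ for all $s\le t$, and that $\psi'_{0,t}(0)=|b(t)|/(1-|a(t)|^2)$. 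To see that $(\psi_{s,t})$ inherits EF4 and that $h_t$ is an admissible conjugation for Lemma~\ref{EF-lemma-decomposition}, I would record the needed $AC^d$-stability: since $a,b$ are continuous (hence locally bounded) with $a',b'\in L^d_{\rm loc}$, the functions $|b|^2=b\overline b$ and $1-|a|^2$ lie in $AC^d$ and are bounded below by a positive constant on each $[0,T]$, whence $|b|$, $1/(1-|a|^2)$, $\tilde b$ and $t\mapsto|b(t)|/(1-|a(t)|^2)$ all lie in $AC^d$, the last being strictly positive. Thus $(\psi_{s,t})$ satisfies EF1, EF2, EF4 together with the normalization $\psi_{s,t}(0)=0$, $\psi'_{s,t}(0)>0$.

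\emph{Step 2 (EF3 in the normalized case).} The key is an elementary Schwarz-lemma estimate: if $\psi\colon\D\to\D$ is holomorphic with $\psi(0)=0$ and $\lambda:=\psi'(0)\in(0,1]$, then $|\psi(w)-w|\le\rho\,\frac{1+\rho}{1-\rho}\,(1-\lambda)$ for $|w|\le\rho<1$. Indeed $g(w):=\psi(w)/w$ maps $\D$ into $\D$ unless $\psi=\mathrm{id}$, and writing $\eta:=M_\lambda\circ g$ with $M_\lambda(\zeta)=(\zeta-\lambda)/(1-\lambda\zeta)$, the Schwarz lemma gives $|\eta(w)|\le|w|$, while $1-g(w)=(1-\lambda)(1-\eta(w))/(1+\lambda\eta(w))$ yields $|1-g(w)|\le\frac{1+\rho}{1-\rho}(1-\lambda)$, and $|\psi(w)-w|=|w|\,|1-g(w)|$. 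Applying this with $\psi=\psi_{u,t}$ and $w:=\psi_{s,u}(z)$, which satisfies $|w|\le|z|=:\rho$, and noting that $\beta(t):=\psi'_{0,t}(0)$ is positive, non-increasing (Schwarz) and in $AC^d$, one gets $1-\psi'_{u,t}(0)=1-\beta(t)/\beta(u)=(\beta(u)-\beta(t))/\beta(u)\le\beta(T)^{-1}\int_u^t|\beta'(\xi)|\,d\xi$ for $0\le s\le u\le t\le T$, hence
\[
|\psi_{s,t}(z)-\psi_{s,u}(z)|=|\psi_{u,t}(w)-w|\le\frac{\rho(1+\rho)}{(1-\rho)\,\beta(T)}\int_u^t|\beta'(\xi)|\,d\xi ,
\]
so $(\psi_{s,t})$ satisfies EF3 with $k_{z,T}=\frac{\rho(1+\rho)}{(1-\rho)\beta(T)}|\beta'|\in L^d([0,T])$.

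\emph{Step 3 (transfer).} Now $(\psi_{s,t})$ is an evolution family of order~$d$, and $\varphi_{s,t}=h_t\circ\psi_{s,t}\circ h_s^{-1}$ with $a\in AC^d([0,+\infty),\D)$ and $\tilde b\in AC^d([0,+\infty),\partial\D)$, so Lemma~\ref{EF-lemma-decomposition} shows that $(\varphi_{s,t})$ is an evolution family of order~$d$, in particular it satisfies EF3. I expect the only genuinely delicate points to be the $AC^d$-stability bookkeeping in Step~1 and the monotonicity argument in Step~2 rewriting $1-\psi'_{u,t}(0)$ as the integral of an $L^d$ function; the Schwarz-lemma estimate and the algebraic verifications are routine.
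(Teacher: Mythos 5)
Your proposal is correct, and its skeleton coincides with the paper's: both directions are handled the same way at the top level (EF3~$\Rightarrow$~EF4 by citing Proposition~\ref{EF-properties}(2) at $z=0$; EF4~$\Rightarrow$~EF3 by conjugating with the M\"obius maps $h_t$ built from $a$ and $b_0=b/|b|$ to reduce to a family fixing the origin with positive derivative there, and then transferring back via Lemma~\ref{EF-lemma-decomposition}). The genuine divergence is in the middle step: where the paper simply invokes \cite[Theorem~7.3]{BCM1} with $\tau=z_0=0$ to conclude that the normalized family $(\psi_{s,t})$ is an evolution family, you prove this from scratch with the Schwarz--Pick estimate $|\psi(w)-w|\le\rho\frac{1+\rho}{1-\rho}\,(1-\psi'(0))$ for self-maps fixing $0$ with $\psi'(0)\in(0,1]$, combined with the monotonicity and local absolute continuity of $\beta(t)=\psi_{0,t}'(0)=|b(t)|/(1-|a(t)|^2)$, which lets you dominate $1-\psi_{u,t}'(0)$ by $\beta(T)^{-1}\int_u^t|\beta'|$. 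I checked the details: the factorization $1-g=(1-\lambda)(1-\eta)/(1+\lambda\eta)$ and the resulting bound are right (the degenerate case $\lambda=1$ forces $\psi=\mathrm{id}$ by the maximum principle, so nothing is lost), $\psi_{s,t}'(0)=\beta(t)/\beta(s)>0$ follows from EF2 exactly as in Proposition~\ref{EF-decomposition}, and the $AC^d$ bookkeeping for $|b|$, $1/(1-|a|^2)$ and $b_0$ is sound since these quantities are locally bounded away from the relevant degeneracies on compact time intervals. What your route buys is self-containedness: the reader does not need the (unstated here) Theorem~7.3 of \cite{BCM1}, at the cost of about a page of elementary estimates; the paper's route is shorter but opaque at exactly the point where the analytic content of EF4~$\Rightarrow$~EF3 lives.
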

\begin{proof} By Proposition \ref{EF-properties} any evolution family satisfies EF4.

Let  $(\varphi_{s,t})$ be a family of holomorphic self-maps of the unit disk satisfying EF1, EF2, and EF4. Write
$$
h_t(z):=\frac{b_0(t)z+a(t)}{1+b_0(t)\overline{a(t)}z}\quad \ \text {for all \ } t\geq0 \ \text {and all \ } z\in\D,
$$
where $b_0(t)=b(t)/|b(t)|$. Define $\psi_{s,t}=h_t^{-1}\circ\v_{s,t}\circ h_s$ for all $0\leq s\leq t<+\infty$. It is clear that the
family $(\psi_{s,t})$ satisfies EF1, EF2, $\psi_{s,t}(0)=0$, and $\psi_{0,t}'(0)=|b(t)|/(1-|a(t)|^2)$ for all
$0\leq s\leq t$. Using \cite[Theorem 7.3]{BCM1} with $\tau=z_0=0$ in that statement, we deduce that
$(\psi_{s,t})$ is an evolution family of order $d$. Finally, we just have to apply Lemma
\ref{EF-lemma-decomposition} to deduce that $(\varphi_{s,t})$ is also an evolution family of order $d$.
\end{proof}

\section{Loewner chains and evolution families}
\label{main-section}

In this section we reformulate and prove our main results connecting evolution families with Loewner chains in a way
similar to the one given in classical Loewner theory.

First of all we prove that any Loewner chain of order $d\in[0,+\infty]$ generates an evolution family of the same order.

\begin{theorem}
\label{LCimpliesEF}
Let $(f_{t})$ be a Loewner chain of order $d\in[1,+\infty]$. Set%
\[
\varphi_{s,t}(z):=f_{t}^{-1}(f_{s}(z)),\text{ }z\in\mathbb{D},\text{ }0\leq s\leq t.
\]
Then $(\varphi_{s,t})$ is a well-defined evolution family of order $d$ in the unit disk and (trivially)
satisfies the equality
\[
f_{t}(\varphi_{s,t}(z))=f_{s}(z),\text{ }z\in\mathbb{D},\text{ }0\leq s\leq t.
\]
\end{theorem}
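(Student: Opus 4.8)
The plan is to verify the three axioms EF1--EF3 directly from the corresponding properties LC1--LC3 of the Loewner chain, after first checking that the $\varphi_{s,t}$ are well-defined holomorphic self-maps of $\D$. Since each $f_t$ is univalent by LC1, the inverse $f_t^{-1}$ is a well-defined biholomorphism from $f_t(\D)$ onto $\D$; by LC2 we have $f_s(\D)\subset f_t(\D)$ whenever $s\le t$, so the composition $\varphi_{s,t}=f_t^{-1}\circ f_s$ makes sense and maps $\D$ holomorphically into $\D$. The identity $f_t\circ\varphi_{s,t}=f_s$ is then immediate from the definition, and EF1 (that $\varphi_{s,s}=\mathrm{id}_\D$) and EF2 (the cocycle relation $\varphi_{s,t}=\varphi_{u,t}\circ\varphi_{s,u}$) follow by trivial algebraic manipulation of compositions of the $f_t$ and their inverses, exactly as in the classical case.

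The substantive point is EF3: given $z\in\D$ and $T>0$, I need a non-negative $k_{z,T}\in L^d([0,T],\R)$ with $|\varphi_{s,u}(z)-\varphi_{s,t}(z)|\le\int_u^t k_{z,T}(\xi)\,d\xi$ for all $0\le s\le u\le t\le T$. The idea is to pull the estimate LC3 back through $f_t^{-1}$. Fix $s$ and set $w:=f_s(z)$; then $\varphi_{s,u}(z)=f_u^{-1}(w)$ and $\varphi_{s,t}(z)=f_t^{-1}(w)$, and writing $\zeta:=f_t^{-1}(w)=\varphi_{s,t}(z)$ we get
\[
|\varphi_{s,u}(z)-\varphi_{s,t}(z)|=|f_u^{-1}(w)-f_t^{-1}(w)|=|f_u^{-1}(f_t(\zeta))-\zeta|.
\]
Now $f_t(\zeta)=f_u(\varphi_{u,t}(\zeta))$ would be circular; instead I estimate $|f_u^{-1}(f_t(\zeta))-\zeta|$ by combining two ingredients: (a) a uniform bound $|\varphi_{s,t}(z)|\le r_0<1$ for all $0\le s\le t\le T$, which I obtain by a normal-families/Schwarz-type argument from the fact that the $\varphi_{s,t}$ are self-maps of $\D$ fixing nothing a priori — more carefully, I first localize by noting all the relevant points $\varphi_{s,t}(z)$ and $f_u^{-1}$-preimages lie in a fixed compact $K\subset\D$, using that $f_s(\D)$, for $s\le T$, together with its $f_t^{-1}$-images stay in a compact subset of $\D$; and (b) a Lipschitz estimate for $f_u^{-1}$ on a compact set, uniform in $u\in[0,T]$, which follows from LC3 (the family $\{f_u\}_{u\le T}$ is locally uniformly bounded and hence, by Cauchy estimates, equicontinuous with derivatives bounded below away from zero on compacta, so the inverses are uniformly Lipschitz on a fixed compact neighborhood). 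Then
\[
|f_u^{-1}(f_t(\zeta))-\zeta|=|f_u^{-1}(f_t(\zeta))-f_u^{-1}(f_u(\zeta))|\le C\,|f_t(\zeta)-f_u(\zeta)|\le C\int_u^t k_{K,T}(\xi)\,d\xi,
\]
and $k_{z,T}:=C\,k_{K,T}$ does the job, with $C$ independent of $s,u,t$.

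The main obstacle is the bookkeeping in step (b): producing the compact set $K\subset\D$ that simultaneously contains all points $\varphi_{s,t}(z)$ for $0\le s\le t\le T$ and on which all the inverses $f_u^{-1}$, $u\in[0,T]$, are uniformly Lipschitz with a constant $C$ that does not blow up as $T$ or the points approach $\partial\D$. For the self-map bound I expect to invoke an analogue of Lemma~\ref{EF-boundedness} — indeed, once EF1 and EF2 are in hand and EF3 is being verified, one can bootstrap, but to avoid circularity I will instead argue directly: by LC3 the functions $f_t$ converge (along $t$) locally uniformly to a limit on $[0,T]$ hence $\{f_t(z):t\in[0,T]\}$ is a compact subset of the fixed simply connected domain $f_T(\D)$ (using LC2), and $f_T^{-1}$ maps this compact set into a compact subset of $\D$; taking $K$ to be such a compact set and using that each $f_t^{-1}$ agrees with a restriction controlled by $f_T^{-1}$ via the chain gives the required uniformity. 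Once the compact set and the uniform Lipschitz constant are secured, the remaining computation is the three-line display above, and the conclusion that $(\varphi_{s,t})$ is an evolution family of order $d$ is immediate.
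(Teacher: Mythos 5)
Your overall architecture is the same as the paper's: check that the $\varphi_{s,t}$ are well defined and that EF1--EF2 are formal, then reduce EF3 to (a) a uniform bound keeping all points $\varphi_{s,t}(z)$, $0\le s\le t\le T$, inside a compact subset of $\D$, and (b) a Lipschitz estimate for the inverses $f_u^{-1}$, uniform in $u\in[0,T]$, applied to $|f_u^{-1}(f_t(\zeta))-f_u^{-1}(f_u(\zeta))|$. The trouble is that both (a) and (b) --- the only non-formal steps --- are justified by arguments that do not work as stated. For (a) you propose to control $\varphi_{s,t}(z)=f_t^{-1}(f_s(z))$ by observing that $\{f_s(z):s\in[0,T]\}$ is a compact subset of $f_T(\D)$ and that ``each $f_t^{-1}$ agrees with a restriction controlled by $f_T^{-1}$ via the chain''. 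But $f_t^{-1}=\varphi_{t,T}^{-1}\circ f_T^{-1}$ on $f_t(\D)$, and $\varphi_{t,T}^{-1}$ is the inverse of a holomorphic self-map of $\D$: by Schwarz--Pick it \emph{expands} the hyperbolic metric, so compactness of $f_T^{-1}(\{f_s(z)\})$ gives no control whatsoever on $f_t^{-1}(f_s(z))$ for $t<T$. This is precisely why the paper spends its Steps 1--2 proving, via an Argument Principle representation of $f_u^{-1}$ that is locally uniform in $u$, that $\sup\{|f_t^{-1}(f_s(z))|:0\le s\le t\le T,\ |z|\le r\}<1$ by a compactness/contradiction argument. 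Some substitute is indispensable (e.g.\ a Rouch\'e/Hurwitz argument: if $\varphi_{s_n,t_n}(z)\to\partial\D$ while $f_{t_n}\to f_t$ locally uniformly and $f_{s_n}(z)\to f_s(z)=f_t(\zeta_0)$ with $\zeta_0\in\D$, then for large $n$ the function $f_{t_n}-f_{s_n}(z)$ would have one zero near $\zeta_0$ and another at $\varphi_{s_n,t_n}(z)$, contradicting univalence); your proposal omits it.

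For (b), the justification ``by Cauchy estimates \dots derivatives bounded below away from zero on compacta'' is not correct: Cauchy estimates give \emph{upper} bounds on $f_u'$; a uniform lower bound requires univalence (Hurwitz) together with continuity in $u$ and compactness of $[0,T]$. More seriously, even granting $\inf\{|f_u'(\xi)|:u\in[0,T],\ \xi\in K\}>0$, this only bounds $|(f_u^{-1})'|$ pointwise on $f_u(K)$, and $f_u(K)$ need not be convex, so a derivative bound does not by itself yield $|f_u^{-1}(A)-f_u^{-1}(B)|\le C|A-B|$ for arbitrary $A,B\in f_u(K)$; one must control the internal geometry of the image. The paper handles this by writing $f_t^{-1}$ as a Cauchy integral over the image curve $\gamma_t=f_t\circ\gamma$ and bounding separately $\mathrm{len}(\gamma_t)$ (its Step 3) and the distance $d=\inf|f_t(a)-f_u(b)|>0$ from $\gamma_t$ to the relevant points; alternatively one could invoke a two-point distortion theorem for univalent functions. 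As written, your step (b) is a gap. The remaining parts of your proposal (well-definedness, EF1, EF2, the algebraic identity, and the final three-line estimate once (a) and (b) are secured) are correct and coincide with the paper's Step 4.
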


\begin{proof}
The proof of this theorem is quite long so we have divided it into several steps
of independent interest on their own. In what follows, $\Omega _{t}:=f_{t}(\mathbb{D}),$ $t\geq 0.$ We also
comment that $\mathop{\rm ins}\Gamma $ will denote the interior of a Jordan curve
$\Gamma $ and $N(g,\Gamma )$ stands for the number of zeros (counting
multiplicity), inside a rectifiable Jordan curve $\Gamma $ contained in $\mathbb{D}$, of a holomorphic map $g$ defined in the whole unit disk. Finally, by $\mathop{\rm ind}(\Gamma,\xi)$ we denote the index of a closed rectifiable curve~$\Gamma$ with respect to a point~$\xi$, and  $D(\xi,r):=\{z\in\mathbb{C}:|z-\xi|<r\}$.

\step1 \textit{For every }$t\geq 0\,$\textit{\ and every }$\omega \in
\Omega _{t},$\textit{\ there exist }$\varepsilon>0,$\textit{\ }$\delta >0$\textit{\
and a rectifiable Jordan curve }$\gamma $\textit{\ with }$\gamma \cup
\mathop{\rm ins}\gamma \subset \mathbb{D}$\textit{\ such that the following
``locally uniform formula for the inverses" holds:}%
\begin{equation*}
f_{u}^{-1}(w)=\frac{1}{2\pi i}\int_{\gamma }\frac{\xi f_{u}^{\prime }(\xi )}{%
f_{u}(\xi )-w}d\xi,
\end{equation*}
\textit{whenever }$u\in \lbrack t-\delta ,t+\delta ]\cap
\lbrack 0,+\infty )$\textit{ and } $w\in D(\omega,\varepsilon)$.

\estep

Fix $t\geq 0$ and $\omega \in \Omega _{t}.$ Denote $z_{0}:=f_{t}^{-1}(\omega
)\in \mathbb{D}$ and choose any $r\in\big(|z_0|,1\big)$ and $R\in(r,1)$. Consider
the complex domain $D_{t}:=f_{t}(D(0,r))\subset \Omega _{t}$ and define
$\gamma $ as the positively oriented circle of radius $R$  centered at the origin.
Since $f_{t}$ is univalent, it follows from the Argument Principle that for
each $w\in D_{t},$%
\begin{equation*}
\frac{1}{2\pi i}\int_{\gamma }^{{}}\frac{f_{t}^{\prime }(\xi )}{f_{t}(\xi )-w%
}d\xi =N(f_{t}-w,\gamma )=1.
\end{equation*}%
Note that
\begin{equation*}
\inf \{|w-f_{t}(z)|:w\in \overline{D_{t}},\text{ }|z|=R\}>0,
\end{equation*}%
because $r<R$ and $f_t$ is continuous and univalent in $\mathbb{D}$.
Moreover, by property LC3, we know that $f_{s}\to f_{t}$
uniformly on $\overline{D(0,R)}$ as $s\to t$. This implies the existence of a number $\delta _{0}>0$ such that
\begin{equation*}
\inf \{|w-f_{u}(z)|:w\in \overline{D_{t}},\text{ }|z|=R\}>0,
\end{equation*}%
for all non-negative $u\in \lbrack t-\delta _{0},t+\delta _{0}].$ In
particular, this allows to consider, for every $w\in D_{t}$ and every
non-negative $u\in \lbrack t-\delta _{0},t+\delta _{0}],$ the Argument
Principle formula
\begin{equation*}
\frac{1}{2\pi i}\int_{\gamma }^{{}}\frac{f_{u}^{\prime }(\xi )}{f_{u}(\xi )-w%
}d\xi =N(f_{u}-w,\gamma ).
\end{equation*}

Again, using property LC3 and the Weierstrass Theorem, we conclude
that
\begin{equation*}
\lim_{u\rightarrow t}\sup \left\{ |N(f_{u}-w,\gamma )-1|:w\in D_{t}\right\}
=0.
\end{equation*}%
But $N(f_u-w,\gamma)$ can take only integer values, so there exists $\delta_1\in(0,\delta _{0})$ such that
\begin{equation*}
\sup \left\{ |N(f_{u}-w,\gamma )-1|:w\in D_{t}\right\} =0,
\end{equation*}
whenever $u\in \lbrack t-\delta _{1},t+\delta _{1}]\cap \lbrack 0,+\infty )$.
In other words, we have showed that
\begin{equation*}
N(f_{u}-w,\gamma )=1,\text{ \ when }u\in \lbrack t-\delta _{1},t+\delta
_{1}]\cap \lbrack 0,+\infty )\text{ and }w\in D_{t}.
\end{equation*}

At this point, we fix $u\in \lbrack t-\delta _{1},t+\delta _{1}]$ and $w\in
D_{t}.$ Our idea is to apply now the generalized Argument Principle for the couple $(id,f_{u}-w)$ and the rectifiable closed curve $\gamma $ (see, e.\,g., \cite[p. 124, chapter V, Theorem 3.6]{Conway}). Namely, recalling
that $f_{u}-w$ is analytic in the unit disk with a unique zero (denoted by $f_{u}^{-1}(w)) $ which is contained in $\mathop{\rm ins}\gamma ,$ we deduce that
\begin{equation*}
\frac{1}{2\pi i}\int_{\gamma }id(\xi )\frac{f_{u}^{\prime }(\xi )}{f_{u}(\xi
)-w}d\xi =id(f_{u}^{-1}(w))N(f_{u}-w,\gamma )=f_{u}^{-1}(w).
\end{equation*}%
In order to finish the proof of Step~1 it is enough to define~$\varepsilon$ as the distance between~$\omega$ and the boundary of $D_t$, which is positive since $D_t$ is open and $\omega\in D_t$ by construction.

\step2 \textit{For any }$r\in (0,1)$\textit{\ and any }$T>0,$\textit{\ we
have that }%
\begin{equation*}
\sup \left\{ |(f_{t}^{-1}\circ f_{s})(z)|:0\leq s\leq t\leq T,\text{ }%
|z|\leq r\right\} <1.
\end{equation*}
\estep

Fix $r\in(0,1)$\textit{\ }and $T>0$ and suppose that the above supremum is $%
1.$ Then, there exist  sequences $(s_{n}),(t_{n})$ and $(z_{n})$ such that:
\begin{itemize}
\item[$(a)$] for all $n\in\mathbb{N}$, $0\leq s_{n}\leq t_{n}\leq T,$ $|z_{n}|\leq
r,$
\item[$(b)$] the following limits exist $s:=\lim_{n}s_{n},$ $t:=\lim_{n}t_{n},$ $z_{0}:=\lim_{n}z_{n},$\\ $%
\beta:=\lim_{n}(f_{t_{n}}^{-1}\circ f_{s_{n}})(z_{n}),$ and
\item[$(c)$] $0\leq s\leq t\leq T,$ $|z_{0}|\leq r,$ $\beta\in\partial\mathbb{D}$.
\end{itemize}
We note that $f_{s}(z_{0})\in \Omega _{s}\subset \Omega _{t}$ and $%
\lim_{n}f_{s_{n}}(z_{n})=f_{s}(z_{0}).$ Therefore, by [Step 1], there exist $%
\varepsilon>0,$ $\delta >0$\ and a Jordan curve $\gamma $\ with $\gamma \cup \mathop{\rm %
ins}\gamma \subset \mathbb{D}$\ such that
\begin{equation*}
f_{u}^{-1}(w)=\frac{1}{2\pi i}\int_{\gamma }\frac{\xi f_{u}^{\prime }(\xi )}{%
f_{u}(\xi )-w}d\xi ,
\end{equation*}%
\text{ whenever }$u\in \lbrack t-\delta ,t+\delta ]\cap[0,+\infty)$\text{
and }$w\in D(f_{s}(z_{0}),\varepsilon)$. In particular, for $n$ large enough, we have that
\begin{equation*}
f_{t_{n}}^{-1}(f_{s_{n}}(z_{n}))=\frac{1}{2\pi i}\int_{\gamma }\frac{\xi
f_{t_{n}}^{\prime }(\xi )}{f_{t_{n}}(\xi )-f_{s_{n}}(z_{n})}d\xi .
\end{equation*}%
Clearly, by property LC3 and the above formula
\begin{equation*}
f_{t_{n}}^{-1}(f_{s_n}(z_{n}))=\frac{1}{2\pi i}\int_{\gamma }\frac{\xi
f_{t_{n}}^{\prime }(\xi )}{f_{t_{n}}(\xi )-f_{s_{n}}(z_{n})}d\xi \longrightarrow \frac{1}{2\pi i}\int_{\gamma }\frac{\xi f_{t}^{\prime }(\xi
)}{f_{t}(\xi )-f_{s}(z_{0})}d\xi =f_{t}^{-1}(f_{s}(z_{0}))
\end{equation*}%
as $n\to+\infty$. Since $f_{t}^{-1}(f_{s}(z_{0}))\in \mathbb{D}$, we obtain a contradiction, which finishes the proof
of Step~2.

\step3 \textit{Let }$\gamma\,:[a,b]\rightarrow\mathbb{C}$\textit{\ be a rectifiable curve in
}$\mathbb{D}$\textit{\ and }$T>0.$\textit{\ Then,
for all }$t\in\lbrack0,T],$\textit{\ the curve }%
\[
\gamma_{t}:[a,b]\rightarrow\mathbb{C}\text{, }\xi\mapsto f_{t}(\gamma(\xi
))\in\Omega_{t}%
\]
\textit{is a well-defined rectifiable curve in }$\Omega_{t}$\textit{.
Moreover,}%
\[
\sup\{\operatorname*{len}(\gamma_{t}):t\in\lbrack0,T]\}<+\infty,
\]
\textit{where, as usual, }$\operatorname*{len}(\gamma_{t})$\textit{\ denotes the length of }$\gamma_{t}.$
\estep

The fact that $\gamma_{t}$ is a well-defined rectifiable curve is widely known. So, suppose that the above
supremum is $+\infty.$ In this case, there
exists a sequence $(t_{n})$ in the interval $[0,T]$ such that $\lim_{n}%
t_{n}=t\in\lbrack0,T]$ and lim$_{n}$ $\operatorname*{len}(\gamma_{t_{n}%
})=+\infty.$ However, the well-known estimate
\[
\operatorname*{len}(\gamma_{t_{n}})\leq\operatorname*{len}(\gamma
)\max\{|f_{t_{n}}^{\prime}(\xi)|:\xi\in\gamma\},
\]
shows (recall that $\gamma$ is a compact set) that there exists a subsequence $(z_{n_{k}})$ in the curve
$\gamma$ converging to some $z_{0}\in\gamma$ such that $\lim_{k}f_{t_{n_{k}}}^{\prime}(z_{n_{k}})=\infty.$
However, by property LC3 and Weierstrass' Theorem, we deduce $\lim_{k}f_{t_{n_{k}}}^{\prime
}(z_{n_{k}})=f_{t}^{\prime}(z_{0})$, obtaining in this way a contradiction.

\step4 \textit{In this step we will finally prove the theorem.} \estep

By properties LC1 and LC2,
we see that the functions
\[
\varphi_{s,t}(z):=f_{t}^{-1}(f_{s}(z)),\text{ }z\in\mathbb{D},\text{ }0\leq s\leq t
\]
are well-defined and, indeed, $\varphi_{s,t}\in\mathrm{Hol}(\mathbb{D},\mathbb{D}),$ for any $0\leq s\leq t$.
Hence, $(\varphi_{s,t})$ will be an evolution family of order $d$ if we are able to prove properties EF1, EF2,
and EF3. The first two properties follow easily from the way we have defined the family
$(\varphi_{s,t}).$ The third property is more difficult to prove. We fix $z\in\mathbb{D}$ and $T>0.$
By [Step 2], there exists $R_{1}%
:=R_{1}(z,T)\in(0,1)$ such that
\[
\sup\{|\varphi_{a,b}(z)|:0\leq a\leq b\leq T\}\leq R_{1}.
\]
Applying again [Step 2], we obtain another $R_{2}:=R_{2}(z,T)\in(0,1)$ such that $R_{2}>R_{1}$
\[
\sup\{|\varphi_{a,b}(z)|:0\leq a\leq b\leq T,|\xi|\leq R_{1}\}<R_{2}.
\]
Additionally, we denote by $\gamma$ the positively oriented circle  of radius $R_{2}$ centered at the origin. As in [Step 3], we also consider the rectifiable  curves $\gamma_{t}:=f_{t}\circ\gamma,$ which are Jordan curves due to the univalence of $f_{t}$.

Now, assume that $0\leq s\leq u\leq t\leq T.$ Then, using property EF2, we obtain%
\[
|\varphi_{s,u}(z)-\varphi_{s,t}(z)|=|\varphi_{s,u}(z)-\varphi_{u,t}%
(\varphi_{s,u}(z))|\leq\sup\{|\varphi_{u,t}(\xi)-\xi|:|\xi|\leq R_{1}\}.
\]
But, for any $|\xi|\leq R_{1},$ we have that $|f_{t}^{-1}(f_{u}(\xi))|<R_{2}$, so $f_{u}(\xi)\in
f_{t}(\operatorname*{ins}\gamma)$. Applying \cite[Lemma 1.1]{Pommerenke}, we see that
$f_{u}(\xi)\in\operatorname*{ins} \gamma_{t}.$ The same argument shows that $f_{t}(\xi)\in\operatorname*{ins}
\gamma_{t}.$ Therefore, using the Cauchy Integral Formula,  for all $|\xi|\leq R_{1}$ we get
\begin{align*}
\left\vert f_{t}^{-1}(f_{u}(\xi))-\xi\right\vert  & =\left\vert f_{t}%
^{-1}(f_{u}(\xi))-f_{t}^{-1}(f_{t}(\xi))\right\vert \\
& =\left\vert \frac{\operatorname*{ind}(\gamma_{t},f_{u}(\xi))}{2\pi i}%
\int_{\gamma_{t}}\frac{f_{t}^{-1}(\eta)}{\eta-f_{u}(\xi)}d\eta-\frac
{\operatorname*{ind}(\gamma_{t},f_{t}(\xi))}{2\pi i}\int_{\gamma_{t}}%
\frac{f_{t}^{-1}(\eta)}{\eta-f_{t}(\xi)}d\eta\right\vert \\
& \le\frac{1}{2\pi}|f_{u}(\xi)-f_{t}(\xi)|\left\vert \int_{\gamma_{t}}%
\frac{f_{t}^{-1}(\eta)}{(\eta-f_{u}(\xi))(\eta-f_{t}(\xi))}d\eta\right\vert .
\end{align*}
We claim that
\[
d=d(z,T):=\inf\{|f_{t}(a)-f_{u}(b)|:0\leq u\leq t\leq T,\text{ }%
|a|=R_{2},\text{ }|b|\leq R_{1}\}>0.
\]
Therefore, recalling that $f_{t}^{-1}(\Omega_{t})\subset\mathbb{D}$ and using the above estimation$,$ we have
\[
\left\vert f_{t}^{-1}(f_{u}(\xi))-\xi\right\vert \leq\frac{1}{2\pi}|f_{u}%
(\xi)-f_{t}(\xi)|\frac{1}{d^{2}}\operatorname*{len}(\gamma_{t}).
\]
Now, by [Step 3], there exists $C=C(z,T)>0$ such that
\[
\sup\{\operatorname*{len}(\gamma_{t}):t\in\lbrack0,T]\}\leq C,
\]
so
\[
|\varphi_{s,u}(z)-\varphi_{s,t}(z)|\leq\frac{C}{2\pi d^{2}}\sup\{|f_{u}%
(\xi)-f_{t}(\xi)|:|\xi|\leq R_{1}\}.
\]
Finally, by property LC3 with $K:=\overline{D(0,R_{1})}$, there exists a non-negative function $k_{z,T}\in
L_{{}}^{d}([0,T];\mathbb{R})$  such that
\[
|\varphi_{s,u}(z)-\varphi_{s,t}(z)|\leq\frac{C}{2\pi d^{2}}\int_{u}^{t}%
k(\eta)d\eta.
\]

Now it remains to prove that $d>0$. Suppose on the contrary that $d=0.$ Then, there exist sequences $(a_{n}),(b_{n}),(u_{n})$
and $(t_{n})$ such that:

$(a)$ for all $n\in\mathbb{N}$, $0\leq u_{n}\leq t_{n}\leq T,$ $|a_{n}%
|=R_{2},$ $|b_{n}|\leq R_{1},$

$(b)$ there exist the following limits $u:=\lim_{n}u_{n},$ $t:=\lim_{n}t_{n},$ $a:=\lim_{n}a_{n},$ $b:=\lim _{n}$ $b_{n},$

$(c)$ $0\leq u\leq t\leq T,$ $|a|=R_{2},$ $|b|\leq R_{1}$, and

$(d)$ $f_{t_{n}}(a_{n})-f_{u_{n}}(b_{n})\to0$ as $n\to+\infty$.

By property LC3, we know that $(f_{u_{n}})$ and $(f_{t_{n}})$ tends to $f_{u}
$ and $f_{t}$, respectively, in the compact-open topology of $\mathrm{Hol}%
(\mathbb{D},\mathbb{C}).$ Therefore, by $(b)$ and $(d),$ we conclude that $f_{u}(b)=f_{t}(a).$ However, using $(c)$
from the definition of the Jordan curves $\gamma$ and $\gamma_{t}$  it is clear  that $a\in\gamma$ and
$f_{t}(a)\in f_{t}\circ\gamma=\gamma_{t}.$ At the same time, $|b|\leq R_{1} $. So by the choice of $R_{2}$ we
find that $|f_{t}^{-1}(f_{u}(b))|<R_{2}$. Thus, $f_{u}(b)\in f_{t}(\operatorname*{ins}\gamma)=\operatorname*{ins}
\gamma_{t}$ by \cite[Lemma 1.1]{Pommerenke}. Obviously
$\gamma_{t}\cap\operatorname*{ins}\gamma_{t}=\emptyset$, so we have a contradiction, which finishes the proof.
\end{proof}

The following lemma shows that if an evolution family has order $d\in[1,+\infty]$, then any Loewner chain
associated with it is also of order~$d$. From another point of view, the next lemma shows that the algebraic equation (\ref{main_EV_LC}) implies indirectly conditions LC2 and LC3.

\begin{lemma}
\label{equationimpliesLC} Let  $(\varphi_{s,t})$ be an evolution family of order $d\in[1,+\infty]$. Assume that for all $t\geq0$ the function $f_t:\D\to\C $ is univalent and
$$
f_t\circ\v_{s,t}=f_s,\quad   0\leq s\leq t<+\infty.
$$
Then the family $(f_t)$ is a Loewner chain of order $d$.
\end{lemma}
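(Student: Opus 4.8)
The plan is to verify conditions LC1--LC3 for $(f_t)$. LC1 is part of the hypothesis. LC2 is immediate: since $\v_{s,t}(\D)\subset\D$, the identity $f_s=f_t\circ\v_{s,t}$ gives $f_s(\D)=f_t(\v_{s,t}(\D))\subset f_t(\D)$. Thus the genuine content is the regularity condition LC3, and I would deduce it from property EF3 of the evolution family --- in the compact-set form of Proposition~\ref{EF-properties}(1) --- combined with a Lipschitz bound for the maps $f_t$ on compact subsets of $\D$ that is uniform in $t$ over bounded time intervals.

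More precisely, fix a compact $K\subset\D$ and $T>0$, choose $r<1$ with $K\subset\{|z|\le r\}$, and use Lemma~\ref{EF-boundedness} to obtain $R=R(r,T)<1$ with $|\v_{s,t}(z)|\le R$ for all $z\in K$ and $0\le s\le t\le T$; set $r':=\max(r,R)<1$. Writing $f_s(z)-f_t(z)=f_t(\v_{s,t}(z))-f_t(z)$ and observing that, for $z\in K$, both $z$ and $\v_{s,t}(z)$ lie in $\{|w|\le r'\}$, one gets
$$
|f_s(z)-f_t(z)|\ \le\ M\,|\v_{s,t}(z)-z|,\qquad M:=\sup\{|f_t'(w)|:0\le t\le T,\ |w|\le r'\}.
$$
Proposition~\ref{EF-properties}(1), applied with $u=s$, bounds $|\v_{s,t}(z)-z|=|\v_{s,s}(z)-\v_{s,t}(z)|$ by $\int_s^t k_{K,T}(\xi)\,d\xi$ for some $k_{K,T}\in L^d([0,T],\R)$. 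Hence LC3 follows, with $k_{K,T}$ replaced by $M\,k_{K,T}$ (which still lies in $L^d([0,T],\R)$), \emph{provided} $M=M(r',T)<+\infty$. I expect the finiteness of $M$ to be the only real obstacle.

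To establish it, the idea is to use the behaviour of the evolution family at the interior point $0$. By Proposition~\ref{EF-properties}(2) the functions $a(t):=\v_{0,t}(0)$ and $b(t):=\v_{0,t}'(0)$ are continuous on $[0,+\infty)$ with $b(t)\neq0$; hence on $[0,T]$ we have $\rho_0:=\sup_t|a(t)|<1$ and $0<\inf_t|b(t)|\le\sup_t|b(t)|<+\infty$. Differentiating $f_0=f_t\circ\v_{0,t}$ at $z=0$ gives $f_t(a(t))=f_0(0)$ and $f_t'(a(t))=f_0'(0)/b(t)$; since $f_0$ is univalent, $f_0'(0)\neq0$, so for $t\in[0,T]$ the univalent map $f_t$ takes the fixed value $f_0(0)$ and has a nonvanishing derivative of modulus bounded above and away from zero at the point $a(t)\in\{|z|\le\rho_0\}$. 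Precomposing $f_t$ with the automorphism of $\D$ that sends $0$ to $a(t)$ and then normalizing by an affine transformation of $\C$ puts $f_t$ into the class $\clS$, so the Koebe distortion theorem bounds $f_t$, uniformly in $t\in[0,T]$, on every disk $\{|w|\le\rho\}$ with $\rho<1$; a Cauchy estimate on a slightly larger disk then yields $M<+\infty$. (Equivalently, one can compare $|f_t'(w)|$ with $|f_t'(a(t))|$ directly via the Koebe distortion bounds after a M\"obius change of variable, using that the pseudohyperbolic distance from $w$ to $a(t)$ stays at most $2\rho_1/(1+\rho_1^2)<1$ with $\rho_1:=\max(r',\rho_0)$.) This completes the argument.
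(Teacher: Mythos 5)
Your argument is correct, and it shares the paper's overall skeleton: LC1 is the hypothesis, LC2 is immediate from $f_s(\D)=f_t(\v_{s,t}(\D))\subset f_t(\D)$, and LC3 is reduced to Proposition~\ref{EF-properties}(1) combined with a Lipschitz bound for the $f_t$ on compacta that is uniform over $t\in[0,T]$. Where you genuinely diverge is in how that uniform bound is obtained. The paper never invokes univalence of the $f_t$ for this step: it writes $f_t=f_T\circ\v_{t,T}$, uses Lemma~\ref{EF-boundedness} to see that $\v_{t,T}$ maps the circle $|\xi|=R_2$ into a fixed compact subset of $\D$, and thereby bounds $|f_t|$ on that circle by $\max\{|f_T(w)|:|w|\le R_3\}$ --- a constant depending only on the single function $f_T$; a Cauchy-integral identity for $f_t(\v_{s,t}(z))-f_t(z)$ then yields the Lipschitz estimate directly. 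You instead exploit univalence: differentiating $f_0=f_t\circ\v_{0,t}$ at the origin gives $f_t(a(t))=f_0(0)$ and $f_t'(a(t))=f_0'(0)/b(t)$, and since Proposition~\ref{EF-properties}(2) keeps $|a(t)|$ away from $1$ and $|b(t)|$ away from $0$ and $\infty$ on $[0,T]$, a M\"obius change of variable followed by the growth/distortion theorems for the class $\clS$ bounds $f_t$ (hence $f_t'$, by Cauchy estimates) uniformly on compacta. Both routes are sound. The paper's is lighter and would survive even without assuming the $f_t$ univalent, which keeps the burden of the lemma squarely on the algebraic identity; yours costs the Koebe machinery but produces the explicit bound $|f_t\circ h_t(w)|\le |f_0(0)|+\beta(t)^{-1}|f_0'(0)|\,|w|/(1-|w|)^2$ with $\beta(t)=|\v_{0,t}'(0)|/(1-|\v_{0,t}(0)|^2)$, which is precisely the quantitative control the paper later isolates as hypothesis (3) of Theorem~\ref{no_univ_assum} and the remark following it.
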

\begin{proof} Let $K$ be a compact subset of $\D$ and $T>0$. By Lemma \ref{EF-boundedness},
there exists $R_1\in(0,1)$ such that $|\varphi_{s,t}(z)|\leq R_1$ for all $z\in K$ whenever $0\leq s\leq t\leq T$.
Write $R_2=(1+R)/2$. Again by Lemma \ref{EF-boundedness}, there exists $R_3\in(0,1)$ such that $|\varphi_{s,t}(z)|\leq
R_3$ for all $|z|=R_2$ and  all $0\leq s\leq t\leq T$. Since the function $f_T$ is continuous, there is a
positive constant $M$ such that
\[
|f_t(\xi)|=|f_T(\varphi_{t,T}(\xi))|\leq M
\]
for all $t\leq T$ and any complex number $\xi$ with $|\xi|=R_2$. Fix $z\in K$ and $0\leq s\leq t\leq T$. We
have
\begin{eqnarray*}
  f_s(z)-f_t(z) &=& f_t(\varphi_{s,t}(z))-f_t(z)  \\
   &=& \frac{1}{2 \pi i} \int_{C(0,R_2)^+}\left( \frac{f_t(\xi)}{\xi-\varphi_{s,t}(z)}-
   \frac{f_t(\xi)}{\xi-z}\right)d\xi\\
   &=& \frac{1}{2 \pi i} \int_{C(0,R_2)^+}
   \left( \frac{f_t(\xi)(\varphi_{s,t}(z)-z)}{(\xi-\varphi_{s,t}(z))(\xi-z)}\right) d\xi \\
   &=& \frac{\varphi_{s,t}(z)-z}{2 \pi i} \int_{C(0,R_2)^+}
   \left( \frac{f_t(\xi)}{(\xi-\varphi_{s,t}(z))(\xi-z)}\right) d\xi .
\end{eqnarray*}
Therefore,
\begin{eqnarray*}
  |f_s(z)-f_t(z)|&=& \left|\frac{\varphi_{s,t}(z)-z}{2 \pi i} \int_{C(0,R_2)^+}
   \left( \frac{f_t(\xi)}{(\xi-\varphi_{s,t}(z))(\xi-z)}\right) d\xi \right|\\
              &\leq & R_2\frac{M}{(R_2-R_1)^2}|\varphi_{s,t}(z)-z|
  \end{eqnarray*}
  for all $z\in K$ and $0\leq s\leq t\leq T$.
  Now the conclusion of the lemma easily follows from the last inequality.
\end{proof}

Now we prove the existence of a Loewner chain associated with a given evolution family.

\begin{theorem}\label{Gum_exist}
Let $\fami\varphi$ be an evolution family of order~$d\in[1,+\infty]$. Then there exists a normalized Loewner chain $\famc f$
of order~$d$ associated with the evolution family~$\fami\varphi$ such that the set
$\Omega:=\cup_{t\gtz}f_t(\UD)$ coincides with the disk $\{z:|z|<1/\beta\}$ if $\beta>0$ and with the whole
complex plane~$\C$ if $\beta=0$, where
$\beta=\lim_{t\to+\infty}\frac{|\varphi_{0,t}'(0)|}{1-|\varphi_{0,t}(0)|^2}.$
\end{theorem}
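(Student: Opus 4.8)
The plan is to construct the chain $(f_t)$ directly from the evolution family by a limiting procedure analogous to the classical one, and then verify the defining properties.

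First I would reduce to the normalized-at-the-origin case. By Proposition~\ref{EF-decomposition}, write $\varphi_{s,t}=h_t\circ\psi_{s,t}\circ h_s^{-1}$, where $(\psi_{s,t})$ is an evolution family with $\psi_{s,t}(0)=0$, $\psi_{s,t}'(0)>0$, and $h_t$ are the disk automorphisms given by the proposition with $h_0=\mathrm{id}$. For the family $(\psi_{s,t})$ one has the monotonicity $\psi_{0,t}'(0)=\psi_{u,t}'(0)\,\psi_{0,u}'(0)$ from EF2 together with the Schwarz lemma, so $t\mapsto \psi_{0,t}'(0)$ is nonincreasing and positive; define $\lambda(t):=1/\psi_{0,t}'(0)\ge 1$, nondecreasing. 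The natural candidate is
\begin{equation*}
f_t(z):=\lim_{T\to+\infty}\lambda(T)\,\big(\psi_{t,T}'(0)\big)^{-1}\psi_{t,T}(z)\quad\text{rescaled appropriately,}
\end{equation*}
but it is cleaner to set $g_t:=\lim_{T\to\infty}\lambda(T)\psi_{0,T}\circ(\ldots)$; concretely, put $\tilde f_t:=\lim_{T\to+\infty}\lambda(T)\psi_{t,T}$ wherever this limit exists. Using EF2, $\psi_{0,T}=\psi_{t,T}\circ\psi_{0,t}$, and the normalization at $0$, the functions $\lambda(T)\psi_{t,T}$ are univalent on $\D$ with derivative at $0$ equal to $\lambda(T)\psi_{t,T}'(0)=\lambda(T)\psi_{0,T}'(0)/\psi_{0,t}'(0)=1/\psi_{0,t}'(0)=\lambda(t)$, a quantity independent of $T$. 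Hence by the growth/distortion bounds for the class $\mathcal S$ (Koebe), the family $\{\lambda(T)\psi_{t,T}:T\ge t\}$ is locally bounded, so normal; to get convergence (not just subconvergence) I would show the sequence is "monotone" in the Carathéodory-kernel sense: the domains $\lambda(T)\psi_{t,T}(\D)$ increase with $T$ because $\psi_{t,T}=\psi_{T',T}\circ\psi_{t,T'}$ for $T\ge T'\ge t$ and $\lambda(T)\psi_{T',T}$ maps $\D$ into a larger disk-like region, so by the Carathéodory kernel theorem the limit $\tilde f_t$ exists locally uniformly and is univalent (or constant, excluded since the derivative at $0$ is $\lambda(t)>0$). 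This gives LC1 for the $\psi$-chain.

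Next I would verify the algebraic relation and the remaining chain properties for $(\tilde f_t)$. From $\psi_{s,T}=\psi_{t,T}\circ\psi_{s,t}$ and passing to the limit $T\to\infty$ one gets $\tilde f_t\circ\psi_{s,t}=\tilde f_s$, which is \eqref{main_EV_LC} for the $\psi$-family; this immediately yields LC2 since $\tilde f_s(\D)=\tilde f_t(\psi_{s,t}(\D))\subset \tilde f_t(\D)$. For LC3 I would invoke Lemma~\ref{equationimpliesLC}: since $(\psi_{s,t})$ is an evolution family of order $d$ and the $\tilde f_t$ are univalent and satisfy the algebraic equation, the lemma forces $(\tilde f_t)$ to be a Loewner chain of order $d$. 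Then I would transfer back to $(\varphi_{s,t})$ by setting $f_t:=\tilde f_t\circ h_t^{-1}$; since $h_t$ are automorphisms in $AC^d$, the composition $f_t\circ\varphi_{s,t}=\tilde f_t\circ h_t^{-1}\circ h_t\circ\psi_{s,t}\circ h_s^{-1}=\tilde f_t\circ\psi_{s,t}\circ h_s^{-1}=\tilde f_s\circ h_s^{-1}=f_s$, so \eqref{main_EV_LC} holds, and applying Lemma~\ref{equationimpliesLC} once more shows $(f_t)$ has order $d$. A final affine normalization $f_t\mapsto (f_t-f_0(0))/f_0'(0)$ makes the chain normalized without disturbing LC1--LC3 or the algebraic equation.

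It remains to identify $\Omega=\bigcup_t f_t(\D)$. Here I would compute $\tilde f_t'(0)=\lambda(t)\to\lambda(\infty):=\lim_{t\to\infty}1/\psi_{0,t}'(0)\in[1,+\infty]$. By the Koebe one-quarter theorem applied to $\tilde f_t/\lambda(t)\in\mathcal S$, each $\tilde f_t(\D)$ contains the disk of radius $\lambda(t)/4$ and is contained in the disk of radius $\lambda(t)$; the increasing union $\Omega$ is then either all of $\C$ (when $\lambda(\infty)=\infty$) or a bounded simply connected domain. For the latter case, the sharpened claim is that $\Omega$ is exactly a round disk of radius $1/\beta$; I would see this by noting that after the standard normalization the functions $\tilde f_t$ converge as $t\to\infty$ to the identity-like limit whose image is $\bigcup_t \tilde f_t(\D)$, and that $\beta$ for the $\varphi$-family relates to $\lambda(\infty)$ and the $h_t$ via the chain rule: $\beta=\lim_t |\varphi_{0,t}'(0)|/(1-|\varphi_{0,t}(0)|^2)=\lim_t \psi_{0,t}'(0)=1/\lambda(\infty)$, using $h_t'(0)=b(t)(1-|a(t)|^2)$ and $|h_t'(0)|=1-|a(t)|^2$. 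I expect the main obstacle to be proving actual convergence of $\lambda(T)\psi_{t,T}$ (rather than merely normality) and simultaneously pinning down that the limiting domain is a genuine round disk rather than some other simply connected set — this is where the Carathéodory kernel theorem, combined with a careful use of EF2 to establish monotone nesting of the rescaled images, does the real work; the order-$d$ bookkeeping is then handled cheaply by Lemma~\ref{equationimpliesLC}.
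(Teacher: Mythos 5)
Your overall architecture coincides with the paper's: reduce to the inner family $(\psi_{s,t})$ via Proposition~\ref{EF-decomposition}, define the chain as $\lim_{T\to\infty}\psi_{t,T}/\psi_{0,T}'(0)$, get the algebraic relation by passing to the limit in EF2, invoke Lemma~\ref{equationimpliesLC} for the order-$d$ property, conjugate back by the automorphisms $h_t$, and use Koebe together with $\beta=\lim_t\psi_{0,t}'(0)$ to identify $\Omega$. All of that matches the paper. The gap sits exactly where you flag ``the main obstacle'': your mechanism for upgrading normality to actual convergence is the claim that the domains $D_T:=\lambda(T)\psi_{t,T}(\D)$ are increasing in $T$, so that the Carath\'eodory kernel theorem applies. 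That claim is false. Writing $\psi_{t,T}=\psi_{T',T}\circ\psi_{t,T'}$ reduces it (already in the case $t=T'$) to the assertion that a univalent self-map $\phi$ of $\D$ with $\phi(0)=0$ and $\phi'(0)=a>0$ satisfies $a\D\subset\phi(\D)$; Koebe only gives $\tfrac{a}{4}\D\subset\phi(\D)$, and the radial slit map of $\D$ onto $\D\setminus[r,1)$ has $a=4r/(1+r)^2>r$ while omitting the point $r$, so $a\D\not\subset\phi(\D)$. This really occurs inside evolution families: for the semigroup generated by $G(z)=-z(1+z)/(1-z)$ one has $\psi_{s,t}(\D)=\D\setminus[r(t-s),1)$ with $4r(s)/(1+r(s))^2=e^{-s}$, and the tip of the slit of $D_T=e^T\psi_{t,T}(\D)$ sits at $e^t(1+r(T-t))^2/4$, which strictly decreases in $T$; hence $D_{T'}\not\subset D_T$ for $t<T'<T$. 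Without the nesting, the kernel theorem gives you nothing beyond the normality you already have.

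The paper closes this step by a different, genuinely analytic argument: since $\psi_{s,t}(0)=0$ and $\psi_{s,t}'(0)>0$, the Berkson--Porta point is $\tau\equiv0$, so $\partial\psi_{s,t}/\partial t=-\psi_{s,t}\,p(\psi_{s,t},t)$ and therefore $\psi_{s,t}(z)=z\exp\bigl(-\int_s^tp(\psi_{s,\xi}(z),\xi)\,d\xi\bigr)$. Existence of the limit is then a Cauchy criterion for $\Lambda_{s,t}(z)=\int_s^t\bigl(p(0,\xi)-p(\psi_{s,\xi}(z),\xi)\bigr)d\xi$, obtained from the Harnack-type bound $|p(w,\xi)-p(0,\xi)|\le\frac{2|w|}{1-|w|}\,p(0,\xi)$ combined with the distortion-theorem estimate $|\psi_{s,\xi}(z)|\le\psi_{s,\xi}'(0)|z|/(1-|z|)^2$; the resulting integrand telescopes to $\exp(-\int_0^up(0,\xi)d\xi)-\exp(-\int_0^tp(0,\xi)d\xi)$, which converges because $p(0,\cdot)\ge0$. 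You need this (or some other working convergence argument) to replace the false monotonicity claim; the remainder of your outline, including the identification of $\Omega$ via $\psi_s\to\mathrm{id}$ when $\beta>0$, then goes through essentially as in the paper.
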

\begin{proof}
By Proposition~\ref{EF-decomposition} we have $\varphi_{s,t}=h_t\circ\psi_{s,t}\circ h_s^{-1}$, where
$(\psi_{s,t})$ is an evolution family such that $\psi_{s,t}(0)=0$ and $\psi'_{s,t}(0)>0$ for all $t\gt s \gtz$,
and $h_t$ is a conformal automorphism of~$\UD$ for each $t\gtz$, with $h_0$ being the identity map.

Now we build the Loewner chain for the evolution family $(\psi_{s,t})$ and then a simple argument will allow us to finish the proof.

By Theorems \ref{EF-bijection-VF} and \ref{Berkson-Porta-for-Herglotz}, there exist a measurable function
$\tau:[0,+\infty)\to \oD$  and a Herglotz function $p(z,t)$ of order $d$ such that for all $z\in \D$ and all
$s\ge0$,
\begin{equation}\label{Gum_LKG} \frac{\de \psi_{s,t}(z)}{\de
t}=(\psi_{s,t}(z)-\tau(t))(\overline{\tau(t)}\psi_{s,t}(z)-1)p(\psi_{s,t}(z),t)
 \quad \hbox{a.e. $t\in
[0,+\infty)$}.
\end{equation}
Since $\psi_{s,t}(0)=0$, $\psi'_{s,t}(0)>0$, $t\gt s\gtz$, we conclude that $\tau(t)\equiv0$. In this case, one can rewrite equation~\eqref{Gum_LKG} in the form
\begin{equation}\label{Gum_LKM}
\frac{\de \psi_{s,t}(z)}{\de t}=-\psi_{s,t}(z)p(\psi_{s,t}(z),t).
\end{equation}

We will
show that the functions
\begin{equation}\label{Gum_lim}
g_s(z):=\lim_{t\to+\infty} \frac{\psi_{s,t}(z)}{\psi'_{0,t}(0)},
\end{equation}
where the limit is attained uniformly on compact subsets of the unit disk, form a Loewner chain associated with $(\psi_{s,t})$. Our proof of the existence of that limit follows the approach given in \cite[Chapter 6]{Pommerenke}. However,
for the sake of clearness and completeness, we include the details.

Assume for a moment that such a limit does exist. Then $g_s'(0):=\lim_{t\to+\infty}
\frac{\psi_{s,t}'(0)}{\psi'_{0,t}(0)}=\frac{1}{\psi'_{0,s}(0)}>0$. Moreover, since all the functions
$\psi_{s,t}$ are univalent \cite[Corollary 6.3]{BCM1}, we conclude that the function $g_s$ is univalent for all $s\ge0$. Moreover, by construction
$$
g_t\circ\psi_{s,t}(z)=\lim_{u\to+\infty} \frac{\psi_{t,u}(\psi_{s,t}(z))}{\psi'_{0,u}(0)} = \lim_{u\to+\infty}
\frac{\psi_{s,u}(z)}{\psi'_{0,u}(0)}=g_s(z),\qquad 0\leq s\leq t<+\infty.
$$
Therefore, by Lemma \ref{equationimpliesLC}, the family $(g_t)$ is a Loewner chain of order $d$ associated with
$(\psi_{s,t})$. Also, it is clear that it is a normalized Loewner chain.

Therefore, we have only to prove the existence of~\eqref{Gum_lim}.

By \cite[Proof of Theorem 7.1]{BCM1}, for all $z\in\mathbb{D}$ and $t>s\ge0,$
\begin{equation}\label{formav}
\psi_{s,t}(z)=z\exp\left( -\int_{s}^{t}p(\psi_{s,\xi}(z),\xi )d\xi\right).
\end{equation}
Write $\Lambda_{s,t}(z):=\int_s^t\left(p(0,\xi)-p(\psi_{s,\xi}(z),\xi)\right)d\xi$. Notice that
$$\psi_{s,t}'(0)=\exp\left(-\int_s^t p(0,\xi)d\xi\right)>0.$$ Therefore,
\begin{equation}
\frac{\psi_{s,t}(z)}{\psi_{0,t}'(0)}=z\exp\left( \int_{0}^{s}p(0,\xi )d\xi\right)
\exp\left(\Lambda_{s,t}(z)\right) .
\end{equation}
Now in order to prove the existence of the limit~\eqref{Gum_lim}, it is sufficient
to show that $\Lambda_{s,t}$ has a limit as $t\to+\infty$ which is attained uniformly on compact subsets of the unit disk.

By property EF2, we have that $\psi_{0,t}'(0)=\psi_{s,t}'(0)\psi_{0,s}'(0)\leq\psi_{0,s}'(0)$, because
$\psi_{s,t}(0)=0$ and $\psi_{s,t}(\D)\subseteq \D$. That is
\begin{equation*}
\frac{\partial \psi_{0,t}'(0)}{\partial t}\leq 0,\quad \text{ a.e. \ } t\in[0,+\infty).
\end{equation*}
Since
$$
\frac{\partial \psi_{0,t}'(0)}{\partial t}=-p(0,t)\exp\left( -\int_{s}^{t}p(0,\xi
)d\xi\right)=-p(0,t)\psi_{s,t}'(0),~~~~
 \text{  a.e. \ } t\in[0,+\infty),
$$
we conclude that $p(0,t)\geq 0$ for a.e. $t\in [0,\infty)$.

When $\Re p(\cdot,\xi)>0$ (otherwise, $p(\cdot,\xi)$ is constant), necessarily $p(0,\xi)> 0$ and the holomorphic map $z\mapsto\frac{p(z,\xi)-p(0,\xi)}{p(z,\xi)+\overline{p(0,\xi)}}
$ sends the unit disc into itself and fixes the origin. Then
\begin{align*}
\left\vert p(z,\xi)-p(0,\xi)\right\vert  & \leq|z|\left\vert p(z,\xi
)+\overline{p(0,\xi)}\right\vert \leq|z||p(z,\xi)|+|z||p(0,\xi)|\\
& \leq|z|\frac{1+|z|}{1-|z|}|p(0,\xi)|+|z||p(0,\xi)|=\frac{2\left\vert z\right\vert }{1-\left\vert z\right\vert
} p(0,\xi) ,
\end{align*}
where we have used~\cite[pages 39-40]{Pommerenke}. Therefore, by \cite[Theorem
1.6]{Pommerenke}, we have
\begin{eqnarray*}
  \left\vert p(\psi_{s,\xi}(z),\xi)-p(0,\xi)\right\vert & \leq &
 \frac{2\left\vert \psi_{s,\xi}(z)\right\vert
}{1-\left\vert \psi_{s,\xi}(z)\right\vert }p(0,\xi) \leq  \frac{2\left\vert \psi_{s,\xi}(z)\right\vert
}{1-\left\vert z\right\vert } p(0,\xi)  \\
   &\leq & \frac{2\left\vert \psi_{s,\xi}'(0)\right\vert
}{(1-\left\vert z\right\vert)^3 } p(0,\xi)
= \frac{2\exp \left( -\int_s^\xi p(0,u)du\right) }{(1-\left\vert
z\right\vert)^3 } p(0,\xi) .
\end{eqnarray*}
Now, we can bound the function $\Lambda_{s,\cdot}(z)$:
\begin{eqnarray*}\label{existence_2}
 \left| \Lambda_{s,t}(z)-\Lambda_{s,u}(z)\right|&\leq &
 \int_u^t\left|p(0,\xi)-p(\psi_{s,\xi}(z),\xi)\right| d\xi \\
   &\leq &  \frac{2}{(1-\left\vert z\right\vert)^3 }
\int_u^t \exp \left( -\int_0^\xi p(0,u)du\right)p(0,\xi) d\xi \\
   &=& \frac{2}{(1-\left\vert z\right\vert)^3 }
\int_u^t \frac{\partial }{\partial \xi}\left[-\exp \left( -\int_0^\xi p(0,u)du\right) \right] d\xi \\
   &=& \frac{2}{(1-\left\vert z\right\vert)^3 }
   \left(\exp\left(-\int_0^up(0,\xi)d\xi\right)-\exp\left(-\int_0^tp(0,\xi)d\xi\right)\right).
\end{eqnarray*}
Finally, from these last inequalities and the fact that $$\lim_{t\to+\infty}\exp\left(-\int_0^t
p(0,\xi)d\xi\right)\in[0,1]$$ (recall that $p(0,\xi)\ge0$ for a.e. $\xi\in[0,+\infty)$), we conclude that the limit
\eqref{Gum_lim} does exist.

Now we consider the family $f_t=g_t\circ h_t^{-1}$. It easy to see that $\famc f$ satisfies the hypothesis of
Lemma~\ref{equationimpliesLC} and hence it is a Loewner chain of order~$d$ associated with~$\fami\varphi$. Since
$f_0=g_0$, the Loewner chain $(f_t)$ is normalized.

Now, let us describe the set $\Omega=\cup_{t\gtz}f_t(\UD)=\cup_{t\gtz}g_t(\UD)$.
An easy computation shows $\psi_{0,t}'(0)=\frac{|\varphi_{0,t}'(0)|}{1-|\varphi_{0,t}(0)|^2}$. In particular,
 since the map  $t\mapsto \psi_{0,t}'(0)$ is monotone, the number
$$
\beta=\lim_{t\to+\infty}\frac{|\varphi_{0,t}'(0)|}{1-|\varphi_{0,t}(0)|^2}
$$
is well-defined.

In view of the equality $g_t'(0)=1/\psi_{0,t}'(0)$,  Koebe's theorem shows that $g_t(\UD)$ contains
a disk of radius $1/(4\psi_{0,t}'(0))$ centered at the origin. In particular, if $\beta=0$, then  $\cup_{t\gtz}g_t(\UD)=\C$.

Suppose now that $\beta>0$. We have proved that in this case $\psi_{s,t}$ has a limit $\psi_s$ as $t\to+\infty$.
Note that $\psi_s'(0)=\beta/\psi'_{0,s}(0)\to 1$ as $s\to+\infty$, while $\psi_s(\UD)\subset\UD$ and
$\psi_s(0)=0$. It follows that $\psi_s\to\id_\UD$ as $s\to+\infty$. Then $g_s$ tends to the mapping $z\mapsto
z/\beta$ as $s\to+\infty$ locally uniformly in $\UD$. Since $g_s(\UD)$ forms an increasing family of domains, it
follows that $\cup_{s\gtz} g_s(\UD)=\{z:|z|<1/\beta\}$.

The proof is now finished.
\end{proof}

In the above proof we have obtained that the function $\beta :[0,+\infty )\rightarrow (0,1]$ given by
\begin{equation*}
\beta(t):=\frac{1}{1-|\varphi _{0,t}(0)|^{2}}|\varphi _{0,t}'(0)|\qquad \text{for all }t\geq 0,\text{ }z\in \D
\end{equation*}
is non-increasing and, as a consequence, the following limit exist
\begin{equation*}
\beta :=\lim_{t\rightarrow +\infty }\beta (t)\in [0,1].
\end{equation*}
This number will play a crucial role in the study of uniqueness of Loewner chains associated with the evolution
family $(\varphi_{s,t})$. For this reason, in the next proposition we analyze in full generality the above
limit.

\begin{proposition}\label{beta_prop}
Let $(\varphi _{s,t})$ be an evolution family of order $d\in[1,+\infty]$ and define
\begin{equation*}
\beta _{z}(t):=\frac{1-|z|^{2}}{1-|\varphi _{0,t}(z)|^{2}}|\varphi _{0,t}^{\prime }(z)| \qquad \text{for all
}t\geq 0,\text{ }z\in \D.
\end{equation*}
Then
\begin{enumerate}
\item For all $z\in \D,$ the map $\beta (z):[0,+\infty )\rightarrow
(0,1]$ is absolutely continuous and non-increasing. In particular,
there exists the following limit
\begin{equation*}
\beta (z):=\lim_{t\rightarrow +\infty }\beta _{z}(t).
\end{equation*}

\item The following assertions are equivalent:

\begin{enumerate}
\item There exists $z\in \D$ such that $\beta (z)=0.$

\item For all $z\in \D$ we have  $\beta (z)=0.$
\end{enumerate}

\item The following assertions are equivalent:

\begin{enumerate}
\item There exists $z\in \D$ with  $\beta (z)=1.$

\item For all $z\in \D,$ we have  $\beta (z)=1.$

\item For all $t\geq 0,$ the map $\varphi _{0,t}$ is an automorphism.

\item For all $0\leq s\leq t,$ the map $\varphi _{s,t}$ is an automorphism.
\end{enumerate}

\item If there is $z\in \D$ such that  $\beta (z)<1,$ then there is $
T\in \lbrack 0,+\infty )$ such that $\varphi _{0,t}$ is an automorphism for all $0\leq t\leq T$ and
$\varphi_{0,t}$ is not an automorphism for all $t>T.$
\end{enumerate}
\end{proposition}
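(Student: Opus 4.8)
The plan is to study the scalar function $\beta_z(t)$ as a product and extract monotonicity and absolute continuity from the decomposition of the evolution family furnished by Proposition~\ref{EF-decomposition}. First I would write $\varphi_{s,t}=h_t\circ\psi_{s,t}\circ h_s^{-1}$ with $\psi_{s,t}(0)=0$, $\psi'_{s,t}(0)>0$, $h_t$ automorphisms of $\D$, $h_0=\id$. A short computation with the chain rule at a point, using the formula $1-|h_t(w)|^2=|h_t'(w)|(1-|w|^2)$ for disk automorphisms, shows that the quantity $\beta_z(t)$ is \emph{invariant} under pre/post-composition with automorphisms up to the factor coming from $h_0=\id$; concretely $\beta_z(t)=\dfrac{1-|w|^2}{1-|\psi_{0,t}(w)|^2}|\psi'_{0,t}(w)|$ where $w=h_0^{-1}(z)=z$, so in fact $\beta_z(t)$ only depends on the normalized family $(\psi_{s,t})$. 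For the normalized family, applying the Schwarz--Pick lemma to $\psi_{u,t}$ (which maps $\D$ into $\D$ fixing $0$) at the point $\psi_{0,u}(w)$ gives $\dfrac{1-|\psi_{0,t}(w)|^2}{|\psi'_{0,t}(w)|}\ge\dfrac{1-|\psi_{0,u}(w)|^2}{|\psi'_{0,u}(w)|}$ for $t\ge u$, i.e. $\beta_z(t)$ is non-increasing; this also yields $\beta_z(t)\in(0,1]$ since $\beta_z(0)=1$ and each factor is positive by Proposition~\ref{EF-properties}(2) ($b(t)\neq0$). Absolute continuity of $t\mapsto\beta_z(t)$ on $[0,T]$ follows from Proposition~\ref{EF-properties}(2): $a(t):=\varphi_{0,t}(z)$ and $b(t):=\varphi'_{0,t}(z)$ lie in $AC^d([0,+\infty),\C)$ with $b(t)\neq0$, and $\beta_z(t)$ is a smooth (indeed real-analytic, since $b(t)\neq0$ and $|a(t)|<1$ by Lemma~\ref{EF-boundedness}) function of $a(t),\overline{a(t)},b(t),\overline{b(t)}$, hence in $AC^d$; this gives existence of $\beta(z)=\lim_{t\to+\infty}\beta_z(t)$. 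Here one should be slightly careful that $\varphi'_{0,t}(z)$, not just $\varphi'_{0,t}(0)$, is in $AC^d$; this is obtained exactly as in the proof of Proposition~\ref{EF-properties}(2) by differentiating the Cauchy integral representation of $\varphi'_{0,t}$ under the integral sign and using the bound $|G(w,t)|\le k_{R,T}(t)$.

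For part (2), the implication (b)$\Rightarrow$(a) is trivial, so the content is (a)$\Rightarrow$(b). Since $\beta_z(t)$ depends only on the normalized family $(\psi_{s,t})$, it suffices to prove this for $(\psi_{0,t})$. The key identity is that for $t\ge u\ge0$, applying Schwarz--Pick to $\psi_{u,t}$ at $\psi_{0,u}(z)$,
\begin{equation*}
\frac{\beta_z(t)}{\beta_z(u)}=\frac{1-|\psi_{0,u}(z)|^2}{1-|\psi_{0,t}(z)|^2}\,\frac{|\psi'_{0,t}(z)|}{|\psi'_{0,u}(z)|}=\frac{1-|\psi_{0,u}(z)|^2}{1-|\psi_{u,t}(\psi_{0,u}(z))|^2}\,|\psi'_{u,t}(\psi_{0,u}(z))|,
\end{equation*}
which is precisely $\beta_{w}$ evaluated for the time-shifted evolution family $(\psi_{u+\cdot,u+\cdot})$ at base point $w=\psi_{0,u}(z)$. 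The Schwarz--Pick estimate at a second point $z'$ can be compared to that at $z$ by a Harnack-type argument (or directly: the hyperbolic derivative $\|\psi_{u,t}\|_{\mathrm{hyp}}(\zeta):=\frac{1-|\zeta|^2}{1-|\psi_{u,t}(\zeta)|^2}|\psi'_{u,t}(\zeta)|$ of a fixed holomorphic self-map is, as a function of $\zeta$, bounded above and below by fixed positive powers/constants depending only on $|\zeta|$ and $|\zeta'|$, by the invariant Schwarz lemma applied to the self-map $\psi_{u,t}$ together with the distortion theorem — cf. \cite[Theorem 1.6]{Pommerenke}). Taking the product over a partition $0=t_0<t_1<\cdots<t_n$ and letting $n\to\infty$, $\beta_z(t_n)=\prod_j\frac{\beta_z(t_{j+1})}{\beta_z(t_j)}$ and the corresponding product for $z'$ differ by factors that are uniformly comparable, so $\beta(z)=0$ iff the infinite product of the hyperbolic-derivative factors diverges to $0$, a condition independent of the chosen point. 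I would carry this out by showing directly that $\log\beta_{z}(t)$ and $\log\beta_{z'}(t)$ differ by a quantity bounded uniformly in $t$; then one limit is $0$ iff the other is.

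For part (3): (c)$\Leftrightarrow$(d) is immediate from EF2 (if $\varphi_{0,t}$ is an automorphism for all $t$ then $\varphi_{s,t}=\varphi_{0,t}\circ\varphi_{0,s}^{-1}$ is too, and conversely $\varphi_{0,t}$ is among the $\varphi_{s,t}$). The equivalence (b)$\Leftrightarrow$(c) comes from the equality case of Schwarz--Pick: $\beta_z(t)=1$ means $\dfrac{1-|\varphi_{0,t}(z)|^2}{|\varphi'_{0,t}(z)|}=1-|z|^2$, i.e. $\varphi_{0,t}$ attains equality in the invariant Schwarz--Pick inequality at $z$, which forces $\varphi_{0,t}\in\Aut(\D)$; conversely, for an automorphism the hyperbolic derivative is identically $1$. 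Finally (a)$\Leftrightarrow$(b): if $\beta(z)=1$ for one $z$, then since $\beta_z$ is non-increasing with $\beta_z(0)=1$ we get $\beta_z(t)\equiv1$, hence $\varphi_{0,t}$ is an automorphism for all $t$ by the equality case, hence $\beta_{z'}(t)\equiv1$ for every $z'$. For part (4): if $\beta_z(T')<1$ for some $z$ and some $T'$, let $T:=\sup\{t\ge0:\varphi_{0,t}\in\Aut(\D)\}$ (which equals $\sup\{t:\beta_z(t)=1\}$ by the equality case, using that $\beta_z$ is non-increasing so the set where it equals $1$ is an interval $[0,T]$ or $[0,T)$). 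Since $\beta_z$ is continuous (part (1)) and $\beta_z(0)=1$, we have $\beta_z(t)=1$ for all $t\le T$ and $\beta_z(t)<1$ for all $t>T$, hence $\varphi_{0,t}\in\Aut(\D)$ for $t\le T$ and $\varphi_{0,t}\notin\Aut(\D)$ for $t>T$. (If $\beta_z$ reaches values $<1$ only in a limit, i.e. $\beta_z(t)=1$ for all finite $t$, then no such $z$ exists, so there is nothing to prove; the hypothesis of (4) excludes this.)

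The main obstacle I anticipate is part (2), specifically establishing the uniform comparability of $\log\beta_z(t)$ and $\log\beta_{z'}(t)$ in $t$ — i.e. turning the pointwise Schwarz--Pick/distortion bounds into a bound on the \emph{accumulated} product over $[0,t]$ that does not depend on how the evolution family is ``chopped up''. The clean way is to avoid partitions altogether: since $\beta_z$ and $\beta_{z'}$ are (by part (1)) absolutely continuous and positive, compute $\frac{d}{dt}\log\beta_z(t)$ from the Herglotz-vector-field equation \eqref{main-eq} and the Berkson--Porta form \eqref{Herglotz-vf-main}, getting an explicit expression $\frac{d}{dt}\log\beta_z(t)=-2\,\Re\!\big[\cdots p(\varphi_{0,t}(z),t)\cdots\big]$ in terms of $p$ and $\varphi_{0,t}(z)$; then bound the difference of the two integrands using $|p(\zeta,t)-p(\zeta',t)|\le C(\zeta,\zeta')\,\Re p(0,t)$ (the Herglotz-function analogue of the estimate used in the proof of Theorem~\ref{Gum_exist}) and the fact that $\int_0^{+\infty}\Re p(0,t)\,dt<+\infty$ exactly when $\beta>0$ (again as in that proof). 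This reduces everything to quantities already controlled in the paper and makes (2) a corollary of the integral estimates behind Theorem~\ref{Gum_exist}.
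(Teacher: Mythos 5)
Your treatment of parts (1), (3) and (4) is correct and essentially matches the paper: the paper also reduces to the normalized family $(\psi_{s,t})$ via Proposition~\ref{EF-decomposition}, obtains monotonicity from the Schwarz--Pick contraction (it divides the pseudo-hyperbolic contraction inequality for $\varphi_{s,t}$ by $|w-z|$ and lets $w\to z$, which is exactly your derivative-form Schwarz--Pick), gets absolute continuity from Proposition~\ref{EF-properties}(2) (which is already stated for arbitrary $z$, so the extra care you take with $\varphi'_{0,t}(z)$ is not needed), and explicitly leaves (3) and (4) to the reader; your equality-case arguments for those are fine.

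Part (2) is where you diverge, and where your proposal has a genuine gap. The paper's argument is a short compactness argument: $(\psi_{0,t})$ is a normal family, so some sequence $\psi_{0,t_n}$ converges locally uniformly to a limit $f$ with $f(0)=0$ and $f'(0)=\beta(0)$; since each $\psi_{0,t}$ is univalent, $f$ is either identically zero or univalent; and $\beta(z)=\frac{1-|z|^2}{1-|f(z)|^2}|f'(z)|$, which vanishes at some point iff $f'$ does iff $f\equiv 0$ iff $\beta(0)=0$. Your first route (partitions with ``uniformly comparable'' factors) fails for exactly the reason you identify: the comparability constants multiply along the partition. Your second route (integrating $\frac{d}{dt}\log\beta_z$) can be made to work, but not with the estimate you cite. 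One computes, with $w=\psi_{0,t}(z)$, that $\frac{d}{dt}\log\beta_z(t)=-\Re p(w,t)-\Re\bigl[w\,\partial_w p(w,t)\bigr]-\frac{2|w|^2\Re p(w,t)}{1-|w|^2}$, which contains the spatial derivative $\partial_w p$; the bound $|p(\zeta,t)-p(\zeta',t)|\le C\,\Re p(0,t)$ does not control it. One needs in addition the standard estimate $|\partial_w p(w,t)|\le 2\Re p(w,t)/(1-|w|^2)$, and --- in the divergent case $\int_0^{\infty}p(0,t)\,dt=+\infty$ --- one must verify that the $\partial_w p$ term cannot cancel the decay; it works out to $\frac{d}{dt}\log\beta_z(t)\le-\frac{(1-|w|)^2}{1-|w|^2}\,\Re p(w,t)\le-c(|z|)\,p(0,t)$, so that $\beta(z)=0$ for every $z$, but this sign analysis is the crux of your approach and is absent from the sketch. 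The normal-families argument avoids all of this.
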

\begin{proof}
By Proposition \ref{EF-decomposition} we have $\varphi_{s,t}=h_t\circ \psi_{s,t}\circ h_s^{-1}$, where
$(\psi_{s,t})$ is an evolution family such that $\psi_{s,t}(0)=0$ and $\psi'_{s,t}(0)>0$ for all $t\geq 0$,
 and $h_t$ is a conformal automorphism of $\D$ for each
$t\geq 0$, with $h_0$ being the identity map.

One can check that
\begin{equation*}
\beta _{z}(t):=\frac{1-|z|^{2}}{1-|\varphi _{0,t}(z)|^{2}}|\varphi _{0,t}^{\prime }(z)|=\frac{1-|z|^{2}}{1-|\psi
_{0,t}(z)|^{2}}\psi _{0,t}^{\prime }(z)\qquad \text{for all }t\geq 0,\text{ }z\in \D.
\end{equation*}

\noindent{\it Proof of (1).} The absolute continuity of the function $\beta _{z}$ is just an easy consequence of
Proposition~\ref{EF-properties}.

Denote by $\widetilde{\rho }_\D$ the pseudo-hyperbolic distance in the unit disk. Since any holomorphic self-map of
the unit disk is a contraction for $\widetilde{\rho }_{\D},$ given $s<t$ and $z,w\in \D,$ we have
\begin{equation*}
\widetilde{\rho }_{D}(\varphi _{0,t}(w),\varphi _{0,t}(z))=%
\widetilde{\rho }_{D}(\varphi _{s,t}(\varphi _{0,s}(w)),\varphi _{s,t}(\varphi _{0,s}(z)))\leq \widetilde{\rho
}_{D}(\varphi _{0,s}(w),\varphi _{0,s}(z)).
\end{equation*}
That is
\begin{equation*}
\left\vert \frac{\varphi _{0,t}(w)-\varphi _{0,t}(z)}{1-\overline{\varphi _{0,t}(w)}\varphi
_{0,t}(z)}\right\vert \leq \left\vert \frac{\varphi _{0,s}(w)-\varphi _{0,s}(z)}{1-\overline{\varphi
_{0,s}(w)}\varphi _{0,s}(z)} \right\vert .
\end{equation*}
Dividing by $|w-z|$ $(w\neq z)$ and taking limits as $w\to z,$ we deduce that
\begin{equation*}
\frac{|\varphi _{0,t}^{\prime }(z)|}{1-|\varphi _{0,t}(z)|^{2}}\leq \frac{ |\varphi _{0,s}^{\prime
}(z)|}{1-|\varphi _{0,s}(z)|^{2}}.
\end{equation*}
Thus $\beta _{z}(t)\leq \beta _{z}(s)$ for all $0\leq s<t<+\infty .$

\noindent{\it Proof of (2).} Notice that we know that the number $\beta (0)=\lim_{t\rightarrow +\infty }\psi _{s,t}^{\prime }(0)$ is well
defined. Moreover, the family of functions $(\psi _{0,t})_{t\geq 0}$ is normal. So there is a sequence $
(t_{n})\rightarrow +\infty $ such that the limit $f(z)=\lim_{n}\psi _{0,t_{n}}(z)$ exists for all $z\in\UD$ and it is attained uniformly on compact subsets of $\UD$. The function $f$ is either constant or univalent in $\UD$, with $f(0)=0$ and  $f^{\prime }(0)=\beta (0).$ Therefore $f$ vanishes identically if and only if
$\beta (0)=0.$ Otherwise, $f$ is univalent and $f^{\prime }(z)\neq 0$ for all $z\in\UD$. Now, observe
that
$$
\beta (z)=\lim_{t\rightarrow +\infty }\beta _{z}(t)=\lim_{n\rightarrow +\infty }\beta
_{z}(t_{n})=\lim_{n\rightarrow +\infty }\frac{1-|z|^{2}}{1-|\psi _{0,t_{n}}(z)|^{2}}|\psi _{0,t_{n}}^{\prime
}(z)|=\frac{1-|z|^{2}}{1-|f(z)|^{2}}|f^{\prime }(z)|.$$
 That is, $\beta (z)=0$ for some $z\in\D$ if and only if $f^{\prime }(z)=0$ for some $z\in\D$
 if and only if $f$ is zero (recall that $f(0)=0$).

Assertions (3) and (4) are easy and we leave their proofs to the reader.
\end{proof}

\begin{definition}
Let $\varphi_{s,t}$ be an evolution family and take
$\beta=\lim_{t\to+\infty}\frac{|\varphi_{0,t}'(0)|}{1-|\varphi_{0,t}(0)|^2}$.
 Let $(f_t)$ be a normalized Loewner chain associated with
$\varphi_{s,t}$. We say that $(f_t)$ is a {\it standard Loewner chain} if $\cup_{t\gtz}f_t(\UD)=\{z:|z|<1/\beta\}$
(obviously, when $\beta=0$, by $\{z:|z|<1/\beta\}$ we mean the complex plane $\C$).
\end{definition}

Note that if $(f_t)$ is a Loewner chain associated with a given evolution family~$(\v_{s,t})$ and $h$ is any univalent holomorphic function in $\Omega:=\cup_{t\ge0}f_t(\UD)$, then the formula $g_t=h\circ f_t$, $t\ge0$, defines a Loewner chain which is also associated with $(\v_{s,t})$. In view of this remark, the following theorem gives a necessary and sufficient condition for an evolution family to have a unique normalized Loewner chain associated with it. Moreover, in case of non-uniqueness, the set of all normalized Loewner chains associated with $(\v_{s,t})$ is explicitly described.

As usual,  we denote by  $\clS$ the class of all univalent holomorphic functions $h$ in the unit disk~$\UD$,
normalized by $h(0)=h'(0)-1=0$. As above, $\beta=\lim_{t\to+\infty}|\varphi_{0,t}'(0)|/(1-|\varphi_{0,t}(0)|^2)$.

\begin{theorem}\label{Gum_uniq}
Let $\fami\varphi$ be an evolution family.
\begin{enumerate}
  \item There is a unique standard Loewner chain $\famc f$ associated with~$\fami\varphi$.
  \item If $\beta =0$, then there is a unique normalized Loewner chain $\famc f$ associated with~$\fami\varphi$ (and obviously, it is the standard one.)
  \item If $\beta>0$ and $(g_t)$ is a normalized Loewner chain associated with $\fami\varphi$, then there is $h\in \clS$ such that
    \begin{equation}\label{allLC}
g_t(z)=h\big(\beta f_t(z)\big)/\beta,
\end{equation}
where $(f_t)$ is the unique standard Loewner chain  associated with~$\fami\varphi$.
\end{enumerate}
\end{theorem}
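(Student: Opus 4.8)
The plan is to reduce all three statements to a single elementary observation relating any two Loewner chains associated with the same evolution family. Given two normalized Loewner chains $\famc f$ and $\famc g$ associated with $\fami\varphi$, I would set $\Omega_t:=f_t(\UD)$ and $\Omega:=\bigcup_{t\ge0}\Omega_t$, and consider for each $t$ the univalent holomorphic map $F_t:=g_t\circ f_t^{-1}$ on the open set $\Omega_t$ (legitimate by LC1). From $f_s=f_t\circ\varphi_{s,t}$ one reads off $f_t^{-1}|_{\Omega_s}=\varphi_{s,t}\circ f_s^{-1}$, and therefore $F_t|_{\Omega_s}=(g_t\circ\varphi_{s,t})\circ f_s^{-1}=g_s\circ f_s^{-1}=F_s$ whenever $s\le t$. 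Since the $\Omega_t$ increase with $t$, the functions $F_t$ are mutual restrictions and glue into a single holomorphic $F:\Omega\to\C$; it is univalent (if $F(z_1)=F(z_2)$ with $z_j\in\Omega_{t_j}$, evaluate $F_{\max\{t_1,t_2\}}$), it satisfies $g_t=F\circ f_t$ for all $t$ (hence $F(\Omega)=\bigcup_{t\ge0}g_t(\UD)$), and, since $f_0$ and $g_0$ are normalized, $F(0)=0$ and $F'(0)=1$.

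Armed with this, I would take $\famc f$ to be a standard Loewner chain associated with $\fami\varphi$, which exists by Theorem~\ref{Gum_exist}. If $\beta=0$, then by definition of a standard chain $\Omega=\C$, so $F$ is a univalent entire function with $F(0)=0$, $F'(0)=1$; since a univalent entire map is affine, $F=\id$, and hence $g_t\equiv f_t$ for every normalized associated chain $\famc g$. This gives statement~(2) at once, and in particular the uniqueness asserted in~(1) when $\beta=0$. If $\beta>0$ and $\famc g$ is also standard, then $\Omega=\{z:|z|<1/\beta\}$ and $F(\Omega)=\bigcup_{t}g_t(\UD)=\{z:|z|<1/\beta\}$, so $w\mapsto\beta F(w/\beta)$ is a holomorphic self-map of $\UD$ fixing the origin with unit derivative there; the Schwarz lemma forces it to be the identity, whence $F=\id$ and $g_t\equiv f_t$. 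Combined with the existence from Theorem~\ref{Gum_exist}, this completes the proof of~(1).

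For~(3) I keep $\beta>0$, let $\famc f$ be the standard chain (so $\Omega=\{z:|z|<1/\beta\}$) and let $\famc g$ be an arbitrary normalized Loewner chain associated with $\fami\varphi$. Then the map $F$ above is univalent on $\{z:|z|<1/\beta\}$ with $F(0)=0$ and $F'(0)=1$, so $h(w):=\beta F(w/\beta)$ belongs to $\clS$ and $g_t(z)=F(f_t(z))=h(\beta f_t(z))/\beta$, which is precisely~\eqref{allLC}. I expect the only genuinely delicate point to be the gluing step: carefully handling the inverse maps $f_t^{-1}$ on the nested domains $\Omega_s\subset\Omega_t$ and checking that the $F_t$ really do assemble into one globally defined univalent function on $\Omega$. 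Everything downstream of that is just the Schwarz lemma together with the classification of univalent entire functions as affine maps.
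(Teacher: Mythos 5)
Your proposal is correct and follows essentially the same route as the paper's own proof: both glue the maps $g_t\circ f_t^{-1}$ on the increasing domains $f_t(\UD)$ into a single normalized univalent map on $\bigcup_t f_t(\UD)$, then dispose of the case $\beta=0$ via the fact that a univalent entire function fixing $0$ with derivative $1$ is the identity, and of the case $\beta>0$ via the Schwarz lemma (for uniqueness of the standard chain) and rescaling into the class $\clS$ (for the description of all normalized chains). The gluing step you flag as delicate is handled in the paper by exactly the computation you give.
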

\begin{proof}
Let $(f_t)$ be the standard Loewner chain built in Theorem \ref{Gum_exist} and  $(g_t)$ another normalized
Loewner chain associated with the evolution family $(\varphi_{s,t}).$ For each $t\geq 0$ denote by
$k_t:f_t(\D)\to g_t(\D)$ the function $k_t=g_t\circ f_t^{-1}$. Write $\Omega _1=\cup_{t\geq0}
f_t(\D)=\{z:|z|<1/\beta\}$ and $\Omega _2=\cup_{t\geq0} g_t(\D).$

If $s<t$ and $w\in f_s(\D)$ with $w=f_s(z)$, we have that
$$
k_t(w)=g_t\circ f_t^{-1}(f_s(z))=g_t\circ
f_t^{-1}(f_t(\varphi_{s,t}(z)))=g_t(\varphi_{s,t}(z))=g_s(z)=g_s(f_s^{-1}(w))=k_s(w).
$$
That is, $k_{t|f_s(\D)}=k_s$. This property says that the function $k:\Omega_1\to\Omega_2$ defined by
$k(w):=k_t(w)$ for some (or any) $t$ such that $w\in f_t(\D)$ is well-defined, univalent and onto. Moreover
$k(0)=0$ and $k'(0)=1$. Notice that $k\circ f_t=g_t$ for all $t$.

Now suppose that $\beta=0$. Then $\Omega_1=\C$. Since $\Omega_2$ is a simply connected domain biholomorphic to
$\C$, we also have that $\Omega_2=\C$. In this case, $k$ is a univalent entire function such that
$k(0)=k'(0)-1=0$. Then $k$ is the identity and $f_t=g_t$ for all $t$. This implies statement (2) and statement
(1) for the case $\beta=0$.

If $\beta>0$, denote by $h:\D\to\Omega_2$ the function $h(z)=\beta k(z/\beta)$. Obviously, $h$ belongs to~$\clS$ and satisfies~\eqref{allLC}. This proves statement (3). Finally, if $(g_t)$ is also a
standard Loewner chain associated with $(\varphi_{s,t})$, then $\Omega_2=\{z:|z|<1/\beta\}$. In this case
$k:\{z:|z|<1/\beta\}\to \{z:|z|<1/\beta\}$ is biholomorphic and $k(0)=k'(0)-1=1$. That is $k$ is the identity
and $f_t=g_t$ for all $t\ge0$. This proves statement (1) for $\beta>0$.

The proof is now complete.
\end{proof}

\begin{remark}
 It is clear from the above proof that one can define the standard Loewner chain as the unique normalized Loewner chain $(f_t)$, associated with the evolution family~$(\v_{s,t})$, such that $\cup_{t\ge0}f_t(\UD)$ is either an Euclidean disk or the whole complex plane.
\end{remark}

Our next theorem says that, in some particular cases, the univalence of the functions which form a
Loewner chain can be replaced by an appropriate bound of these functions on certain hyperbolic disks.

\begin{theorem}\label{no_univ_assum}
Let $(\varphi _{s,t})$ be an evolution family in the unit disk having a unique normalized Loewner chain
associated with it. Suppose $(f_{t})_{t\ge0}$
is a family in $\mathrm{Hol}(\mathbb{D},\mathbb{C%
}).$ Then $(f_{t})$ is the unique normalized Loewner chain associated with~$(\varphi _{s,t})$
if and only if the following three conditions are satisfied:
\begin{enumerate}
\item The function $f_{0}$ is normalized, that is, $f_{0}(0)=f_{0}^{\prime }(0)-1=0.$

\item The equation $f_{t}\circ \varphi _{s,t}=f_{s}$ holds for any $0\leq
s\leq t.$

\item For each $R>0,$ there exists some $C>0$ (independent on $t)$ such that
for all $t\geq 0$ the following inequality
\begin{equation*}
|f_{t}(z)|\leq \frac{C}{\beta (t)},
\end{equation*}%
where
$$\beta(t)=\dfrac{|\varphi _{0,t}^{\prime }(0)|}{1-|\varphi _{0,t}(0)|^{2}},\quad t\geq 0,$$
holds for any $z$ in the hyperbolic disk of radius $R$ centered at $\varphi _{0,t}(0)$.
\end{enumerate}
\end{theorem}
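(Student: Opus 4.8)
The plan is to handle the two implications separately, throughout using the decomposition $\varphi_{s,t}=h_t\circ\psi_{s,t}\circ h_s^{-1}$ provided by Proposition~\ref{EF-decomposition}: here $h_t(z)=(b(t)z+a(t))/(1+b(t)\overline{a(t)}z)$ with $a(t)=\varphi_{0,t}(0)$, $h_0=\id$, and $(\psi_{s,t})$ is an evolution family with $\psi_{s,t}(0)=0$, $\psi_{0,t}'(0)>0$. As noted in the proof of Theorem~\ref{Gum_exist}, $\psi_{0,t}'(0)=\beta(t)$, and since $(\varphi_{s,t})$ is assumed to have a unique normalized Loewner chain, Theorem~\ref{Gum_uniq} forces $\beta:=\lim_{t\to+\infty}\beta(t)=0$; so I work under the standing assumption $\beta=0$. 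Finally, I recall from the proof of Theorem~\ref{Gum_exist} that the limit $G_t(z):=\lim_{u\to+\infty}\psi_{t,u}(z)/\psi_{0,u}'(0)$ exists locally uniformly in $z$ and that $(G_t\circ h_t^{-1})$ is \emph{the} unique normalized Loewner chain associated with $(\varphi_{s,t})$.

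For the ``only if'' implication, let $(f_t)$ be that unique normalized chain. Conditions (1) and (2) are immediate from the definition of a normalized associated Loewner chain, so only (3) needs an argument. Write $f_t=G_t\circ h_t^{-1}$. The map $G_t$ is univalent with $G_t(0)=0$ and $G_t'(0)=1/\psi_{0,t}'(0)=1/\beta(t)$, hence $\beta(t)G_t\in\clS$, so the classical growth theorem for $\clS$ gives $|\beta(t)G_t(w)|\le|w|/(1-|w|)^2$ for all $w\in\D$. Since $h_t$ is a disk automorphism with $h_t(0)=a(t)=\varphi_{0,t}(0)$, it is a hyperbolic isometry, so $h_t^{-1}$ maps the hyperbolic disk of radius $R$ centered at $\varphi_{0,t}(0)$ onto the Euclidean disk $\{|w|<\rho\}$, where $\rho=\rho(R)\in(0,1)$ depends only on $R$. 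Therefore, for $z$ in that hyperbolic disk, $|f_t(z)|=|G_t(h_t^{-1}(z))|\le\tfrac{1}{\beta(t)}\,\rho/(1-\rho)^2$, which is exactly (3) with $C=\rho/(1-\rho)^2$.

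For the ``if'' implication, assume (1)--(3) and set $g_t:=f_t\circ h_t$. Substituting the decomposition into (2) gives $g_t\circ\psi_{s,t}=g_s$ for $0\le s\le t$, and $g_0=f_0$ is normalized; in particular, from $g_t\circ\psi_{0,t}=g_0$ we get $g_t(0)=0$ and $g_t'(0)=1/\psi_{0,t}'(0)=1/\beta(t)$. Read through $h_t$, condition (3) says that $\beta(t)g_t$ is bounded by $C$ on $\{|w|<\rho\}$, so Cauchy's estimates bound all Taylor coefficients: if $g_t(\zeta)=\sum_{n\ge1}c_n(t)\zeta^n$ then $|\beta(t)c_n(t)|\le C/\rho^n$ for every $n\ge1$, with $c_1(t)=1/\beta(t)$. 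Now fix $t\ge0$ and $w\in\D$; from $g_u\circ\psi_{t,u}=g_t$ we have $g_t(w)=g_u(\zeta_u)$ with $\zeta_u:=\psi_{t,u}(w)$, for every $u\ge t$. By the proof of Theorem~\ref{Gum_exist}, $\zeta_u/\beta(u)=\psi_{t,u}(w)/\psi_{0,u}'(0)\to G_t(w)$, and since $\beta(u)\to0$ this forces $\zeta_u\to0$; thus $|\zeta_u|<\rho$ for large $u$, and the coefficient bounds give
\[
\Bigl|g_t(w)-\frac{\zeta_u}{\beta(u)}\Bigr|=\Bigl|\sum_{n\ge2}c_n(u)\zeta_u^n\Bigr|\le\frac{C}{\beta(u)}\cdot\frac{(|\zeta_u|/\rho)^2}{1-|\zeta_u|/\rho}=\frac{C|\zeta_u|}{\rho^2\bigl(1-|\zeta_u|/\rho\bigr)}\cdot\frac{|\zeta_u|}{\beta(u)},
\]
whose right-hand side tends to $0$ because $|\zeta_u|\to0$ while $|\zeta_u|/\beta(u)\to|G_t(w)|$. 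Letting $u\to+\infty$ yields $g_t(w)=G_t(w)$, hence $f_t=g_t\circ h_t^{-1}=G_t\circ h_t^{-1}$ for every $t$; by the first paragraph this is the unique normalized Loewner chain associated with $(\varphi_{s,t})$.

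I expect the ``if'' direction to be the main obstacle: since the $f_t$ are not assumed univalent one cannot invoke the distortion theorems directly, and the whole point of the uniform bound in (3) is to convert it, via Cauchy's estimates, into control of \emph{all} Taylor coefficients of $g_u$ at the origin; this is precisely what lets the functional-equation identity $g_t(w)=g_u(\psi_{t,u}(w))$ pass to the limit against the already-known asymptotics $\psi_{t,u}(w)/\psi_{0,u}'(0)\to G_t(w)$. The remaining points---the hyperbolic-isometry normalization fixing $\rho(R)$ and the use of Theorem~\ref{Gum_uniq} to obtain $\beta=0$---are routine.
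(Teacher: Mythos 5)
Your proposal is correct and follows essentially the same route as the paper: the same decomposition $\varphi_{s,t}=h_t\circ\psi_{s,t}\circ h_s^{-1}$, the growth theorem plus the hyperbolic-isometry observation for the ``only if'' part, and for the ``if'' part the conversion of hypothesis (3) into a quadratic Taylor-remainder bound for $g_t=f_t\circ h_t$ at the origin, which $\beta(t)\to0$ then kills. The only (harmless) variations are cosmetic: you phrase the remainder estimate via Cauchy coefficient bounds rather than the Cauchy integral formula, and you identify the limit of $\psi_{s,u}/\beta(u)$ directly with the chain $G_t$ already built in Theorem~\ref{Gum_exist}, where the paper instead invokes Hurwitz's theorem together with Lemma~\ref{equationimpliesLC} and Theorem~\ref{Gum_uniq} to finish.
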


\begin{proof}
Before dealing with the proof of the theorem, we comment (really recall) some facts and notations which will be
used later on and which have been shown in the course of proofs of previous results. In our case, according
to Theorem \ref{Gum_uniq}, we know that $\lim_{t\rightarrow +\infty }\beta (t)=0.$

Write $$a(t)=\varphi _{0,t}(0),~~~ b(t)=\dfrac{\varphi _{0,t}'(0)}{|\varphi _{0,t}'(0)|},~~~
h_{t}(z)=\dfrac{b(t)z+a(t)}{1+b(t)\overline{a(t)}z},~~~\text{and}~~~ h_{t}^{-1}(z)=
\overline{b(t)}\dfrac{z-a(t)}{1-\overline{a(t)}z},$$ for all $z\in \D$ and all $t\geq 0$. Clearly, $a(0)=0$
and $b(0)=1$. Finally, define $\psi _{s,t}=h_{t}^{-1}\circ \varphi _{s,t}\circ h_{s}$ for all $0\leq s\leq t.$
One can easily prove that $\psi_{s,t}(0)=0$ and $\psi'_{s,t}(0)=\beta (t)/\beta (s)>0$ for all $0\leq
s\leq t$. Hence, by Proposition \ref{EF-decomposition}, $(\psi_{s,t})$ is an evolution family.

$(\Rightarrow )$ Assume that $(f_{t})$ is the (unique) normalized Loewner chain associated with $(\varphi
_{s,t})$. By the very definition of normalized Loewner chains, we see that only property (3) requires a proof.
Note that
\begin{equation*}
\frac{|\psi _{0,t}^{\prime }(0)|}{1-|\psi _{0,t}(0)|^{2}}=|\psi _{0,t}^{\prime }(0)|=\beta (t)\to0\quad\text{as}\quad t\to+\infty.
\end{equation*}
Therefore, according to Theorem \ref{Gum_uniq} (now applied to the evolution family $(\psi_{s,t})$), we deduce
that $(\psi _{s,t})$ has also a unique normalized Loewner chain associated with it. Moreover, such a Loewner
chain $(g_{t})$ satisfies the equality $g'_t(0)\psi'_{s,t}(0)=g'_s(0)$. Consequently, $g'_t(0)\beta(t)=g'_s(0)\beta(s)$ for all
$t,s\ge0$. But $g'_0(0)\beta(0)=1$. Thus $g'_s(0)={1}/{\beta(s)}$ for all $s\ge0$. Using the Distortion
Theorem, we conclude that
\begin{equation*}
|g_{s}(z)|\leq \frac{1}{\beta (s)}\frac{|z|}{(1-|z|)^{2}}
\end{equation*}
for all $s\geq 0$ and for all $z\in\D$.

Now, fix $R>0$ and $s\geq 0$ and consider $r=\dfrac{e^{R}-1}{e^{R}+1}\in (0,1).$
Take $z$ in the hyperbolic disk
of radius $R$ centered at the point $a(s)=\varphi _{0,s}(0)$.  We have that
\begin{equation*}
\rho_\D(h_{s}^{-1}(z),0))= \rho_\D(h_{s}^{-1}(z),h_{s}^{-1}(a(s)))= \rho_\D(z,a(s))\leq R.
\end{equation*}%
Thus, $|h_{s}^{-1}(z)|\leq r$ and
\begin{equation*}
|f_{s}(z)|=|g_{s}(h_{s}^{-1}(z))|\leq \frac{1}{\beta (s)}\frac{r}{(1-r)^2}=\frac{e^{2R}-1}{2\beta (s)}.
\end{equation*}

$(\Leftarrow )$ First of all, bearing in mind Lemma~\ref{equationimpliesLC} and property (1) combined with
Theorem~\ref{Gum_uniq}, we see that we only have to prove the univalence of each function $f_{t}.$ We start by
defining
\begin{equation*}
g_{t}:=f_{t}\circ h_{t}\in \mathrm{Hol}(\mathbb{D},\mathbb{C}),\text{ }t\geq 0\text{. }
\end{equation*}%
By property (2), we observe that
\begin{equation*}
g_{t}\circ \psi _{s,t}=g_{s},\text{ }0\leq s\leq t.
\end{equation*}%
We notice that the family $(g_{t})$ satisfies the following three properties:
\begin{itemize}
  \item[(a)] $g_{t}(0)=0,$ for all $t\geq 0.$
  \item[(b)] $g_{t}^{\prime }(0)=\beta (t)^{-1},$ for all $t\geq 0.$
  \item[(c)] For all $R>0,$ there exists some $C>0$ such that, for all $%
t\geq 0$ and all $|z|\leq R,$ we have
\begin{equation*}
|g_{t}(z)|\leq C\beta (t)^{-1}.
\end{equation*}
\end{itemize}

Now, fix $s\geq 0$ and $r\in (0,1)$ and suppose that $|z|\leq r.$ Take also
some $R\in (0,1)$ with $R>r.$ By Schwarz Lemma,%
\begin{equation*}
|\psi _{s,t}(z)|\leq |z|\leq r,\text{ for all }t\geq s.
\end{equation*}
Then by the Cauchy Integral Formula, for all $t\geq s$ we have%
\begin{eqnarray*}
|g_{s}(z)-\beta (t)^{-1}\psi _{s,t}(z)| &=&|g_{t}(\psi _{s,t}(z))-\beta
(t)^{-1}\psi _{s,t}(z)| \\
&=&|g_{t}(\psi _{s,t}(z))-g_{t}(0)-g_{t}^{\prime }(0)\psi _{s,t}(z)| \\
&=&\left\vert \frac{1}{2\pi i}\int_{C^{+}(0,R)}g_{t}(\xi )\frac{(\psi
_{s,t}(z))^{2}}{\xi ^{2}(\xi -\psi _{s,t}(z))}d\xi \right\vert \\
&\leq &\frac{2\pi R}{2\pi R^{2}(R-r)}|\psi _{s,t}(z)|^{2}\max \{|g_{t}(\xi )|:|\xi |\leq R\}.
\end{eqnarray*}%
Therefore, by property (3), we can find $C=C(R)$ (independent on $t$) such that
\begin{equation*}
|g_{s}(z)-\beta (t)^{-1}\psi _{s,t}(z)|\leq \frac{C}{R(R-r)}\beta (t)^{-1}|\psi _{s,t}(z)|^{2}.
\end{equation*}%
In fact, since $\lim_{t\rightarrow +\infty }\beta (t)=0$ and by the
Distortion Theorem, we deduce that%
\begin{eqnarray*}
|g_{s}(z)-\beta (t)^{-1}\psi _{s,t}(z)| &\leq &\frac{C}{R(R-r)}\beta (t)^{-1}|\psi _{s,t}^{\prime
}(0)|^{2}\left( \frac{r}{(1-r)^{2}}\right) ^{2}
\\
&\leq &\frac{C}{R(R-r)(1-r)^{4}}\frac{1}{\beta (t)}\frac{\beta ^{2}(t)}{%
\beta ^{2}(s)} \\
&=&\frac{C}{R(R-r)(1-r)^{4}\beta ^{2}(s)}\beta (t)\to0\quad \text{as}\quad {t\to+\infty }.
\end{eqnarray*}%
Therefore, we conclude that
\begin{equation*}
g_{s}=\lim_{t\rightarrow +\infty }\beta (t)^{-1}\psi _{s,t},
\end{equation*}
in the compact-open topology of $\mathrm{Hol}(\mathbb{D},\mathbb{C}).$ By Hurwitz's Theorem and property (b), we
find that $g_{s}$ is univalent.
Since $h_{t}$ is an automorphism of the disk, we finally conclude that $f_{s}$ is univalent as well. The proof is complete.
\end{proof}

\begin{remark}
The above proof shows that statement (3) in this last theorem can be replaced by ``for all $z\in \D$ and for all
$s\geq 0$, the following inequality holds
\begin{equation*}
|f_{s}\circ h_s(z)|\leq \frac{1}{\beta (s)}\frac{|z|}{(1-|z|)^{2}},
\end{equation*}
where, as usual, $h_{s}(z)=\dfrac{b(s)z+a(s)}{1+b(s)\overline{a(s)}z}$,  $ a(t)=\varphi _{0,a}(0)$, and
$b(s)=\dfrac{\varphi _{0,s}'(0)}{|\varphi _{0,s}'(0)|}$."

\end{remark}

\section{Loewner chains and partial differential equations}\label{diff_eq_sect}

In classical Loewner theory any Loewner chain satisfies the Loewner~-- Kufarev PDE, while the corresponding evolution family satisfies the Loewner -- Kufarev ODE with the same driving term. Now we prove an analogue of this statement in our general setting.

\begin{theorem}\label{diff_eq_t}The following assertions hold.

\begin{enumerate}
\item Let $(f_{t})$ be a Loewner chain of order $d\in[1,+\infty].$ Then

\begin{enumerate}
\item There exists a set $N\subset [ 0,+\infty )$ (not depending on $z$)
of zero measure such that for every $s\in (0,+\infty )\setminus N$ the function
\begin{equation*}
z\in \mathbb{D}\mapsto \frac{\partial f_{s}(z)}{\partial s} :=\lim_{h\rightarrow
0}\frac{f_{s+h}(z)-f_{s}(z)}{h}\in \C
\end{equation*}
is a well-defined holomorphic function on $\D.$

\item There exist a Herglotz vector field $G$ of order $d$ and a set
$N\subset [0,+\infty )$ (not depending on $z$) of zero measure such that for every $s\in (0,+\infty)\setminus
N$ and every $z\in \D$,
\begin{equation*}
\frac{\partial f_{s}(z)}{\partial s}=-G(z,s)f_{s}^{\prime }(z).
\end{equation*}
\end{enumerate}

\item Let $G$ be a Herglotz vector field of order $d\in[1,+\infty]$ associated with
the evolution family $(\varphi _{s,t}).$ Suppose that $(f_{t})$ is a family of univalent holomorphic functions in the unit disk
such that
\begin{equation*}
\frac{\partial f_{s}(z)}{\partial s}=-G(z,s)f_{s}^{\prime }(z)\qquad \text{ for every }z\in \D\text{, a.e. }s\in
\lbrack 0,+\infty ).
\end{equation*}
Then $(f_{t})$ is a Loewner chain of order $d$ associated with the evolution family~$(\varphi _{s,t}).$
\end{enumerate}
\end{theorem}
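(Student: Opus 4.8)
The plan is to treat the two parts separately; in both, the algebraic content is the trivial cancellation $-G f'+f' G=0$, and the real work is measure-theoretic bookkeeping. For part~(1), I would first apply Theorem~\ref{LCimpliesEF} to obtain the evolution family $(\varphi_{s,t})$ generated by $(f_t)$, which has order $d$, and Theorem~\ref{EF-bijection-VF} to attach to it a Herglotz vector field $G$ of order $d$; this is the $G$ of the statement. The key step uses the identity $f_s=f_{s+h}\circ\varphi_{s,s+h}$: writing, for $h>0$,
\[
\frac{f_{s+h}(z)-f_s(z)}{h}
=\Bigl(\int_0^1 f_{s+h}'\bigl(\varphi_{s,s+h}(z)+\theta\,(z-\varphi_{s,s+h}(z))\bigr)\,d\theta\Bigr)\,
\frac{z-\varphi_{s,s+h}(z)}{h},
\]
one sees that as $h\to0^+$ the integral tends to $f_s'(z)$ (since $f_{s+h}\to f_s$, hence $f_{s+h}'\to f_s'$, locally uniformly by LC3, and $\varphi_{s,s+h}(z)\to z$), while the second factor tends to $-G(z,s)$ for a.e.\ $s$. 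The latter I would obtain from the integrated forward Loewner equation $\varphi_{s,s+h}(z)-z=\int_s^{s+h}G(\varphi_{s,\xi}(z),\xi)\,d\xi$ by a Lebesgue-point argument: split off $\frac1h\int_s^{s+h}\bigl(G(\varphi_{s,\xi}(z),\xi)-G(z,\xi)\bigr)\,d\xi$, which is small since $\varphi_{s,\xi}(z)$ stays in a fixed compact and tends to $z$ (Lemma~\ref{EF-boundedness}) while $\partial_zG$ is dominated there, via Cauchy's estimate, by an $L^d$ function, leaving $\frac1h\int_s^{s+h}G(z,\xi)\,d\xi\to G(z,s)$. This proves that the right derivative of $s\mapsto f_s(z)$ exists a.e.\ and equals $-G(z,s)f_s'(z)$; as $s\mapsto f_s(z)$ is locally absolutely continuous (LC3 with $K=\{z\}$), its two-sided derivative exists a.e.\ and coincides with it, so $\partial_sf_s(z)=-G(z,s)f_s'(z)$ a.e., for each fixed $z$.

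To upgrade this to a single null set $N$ (independent of $z$) and to holomorphic dependence on $z$, I would fix a countable dense set $\{z_n\}\subset\mathbb{D}$ and let $N$ be the union of the exceptional sets above over all $z_n$, together with the non-Lebesgue points of the functions $k_{K_m,T_j}$ from LC3 over exhaustions $K_m\uparrow\mathbb{D}$, $T_j\to\infty$. Off $N$ the difference quotients $z\mapsto\bigl(f_{s+h}(z)-f_s(z)\bigr)/h$ are holomorphic, locally uniformly bounded in $h$ (the Lebesgue-point bound gives $|f_{s+h}(z)-f_s(z)|/|h|\le\bigl|\int_s^{s+h}k_{K,T}(\xi)\,d\xi\bigr|/|h|$, which is bounded as $h\to0$), and converge at each $z_n$; Vitali's theorem then forces locally uniform convergence, and the identity theorem identifies the limit with $z\mapsto-G(z,s)f_s'(z)$. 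This yields both (a) and (b) with this $G$ and this $N$.

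For part~(2), the target is the identity $f_t\circ\varphi_{s,t}=f_s$ for $0\le s\le t$: granting it, $f_s(\mathbb{D})=f_t(\varphi_{s,t}(\mathbb{D}))\subset f_t(\mathbb{D})$ gives LC2, LC1 is in the hypothesis, and Lemma~\ref{equationimpliesLC} then supplies LC3 together with the order $d$ and the ``associated'' property. To prove the identity I would fix $s$ and $z$, put $\gamma(u):=\varphi_{s,u}(z)$ for $u\ge s$ --- a curve that is locally absolutely continuous, stays in a compact $K\subset\mathbb{D}$ on bounded intervals (Lemma~\ref{EF-boundedness}), and satisfies $\gamma'(u)=G(\gamma(u),u)$ a.e.\ (Theorem~\ref{EF-bijection-VF}) --- and show that $F(u):=f_u(\gamma(u))$ is constant. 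Indeed, the chain rule gives, for a.e.\ $u$,
\[
F'(u)=\bigl(\partial_uf_u\bigr)(\gamma(u))+f_u'(\gamma(u))\,\gamma'(u)
=-G(\gamma(u),u)\,f_u'(\gamma(u))+f_u'(\gamma(u))\,G(\gamma(u),u)=0,
\]
whence $f_t(\varphi_{s,t}(z))=F(t)=F(s)=f_s(z)$.

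The delicate point --- and, I expect, the crux of the theorem --- is the regularity needed to make these computations legitimate. In part~(1) it is the passage from the pointwise-in-$z$ a.e.\ statement to a single null set off which the derivative is a genuine holomorphic function, i.e.\ the local boundedness of the difference quotients combined with Vitali. In part~(2) one needs $t\mapsto f_t(w)$ to be locally absolutely continuous for each $w$ --- without which ``$F'=0$ a.e.'' does not force $F$ constant --- and $\{f_t:0\le t\le T\}$ to be locally uniformly bounded, so that the chain rule for $F$ is valid; this regularity is to be read into the hypothesis, or else recovered from the equation by using the distortion theorem ($|f_t'(z)|\le|f_t'(0)|(1+|z|)(1-|z|)^{-3}$ and $|f_t''(0)|\le 4|f_t'(0)|$) near $z=0$ to get $|\partial_tf_t'(0)|\le C\,k(t)\,|f_t'(0)|$ and $|\partial_tf_t(0)|\le k(t)\,|f_t'(0)|$ a.e., followed by a Gronwall bound for $t\mapsto(f_t(0),f_t'(0))$ on compact $t$-intervals. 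Once this is in hand, the cancellations above complete both parts.
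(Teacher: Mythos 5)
Your proof is correct and its overall architecture coincides with the paper's: both reduce part (2) to showing that $u\mapsto f_u(\varphi_{s,u}(z))$ is constant and then invoke Lemma~\ref{equationimpliesLC}, and both obtain the $z$-independent null set in part (1) from normality of the difference quotients together with the identity theorem applied on a countable set with an accumulation point. Where you genuinely differ is in how the derivative is identified. The paper first proves (1.a) in isolation --- a Montel argument plus agreement of all limit functions of $\mathcal F_s$ at the points $1/(m+1)$ --- and only afterwards obtains (1.b) by differentiating $f_t(\varphi_{0,t}(z))=f_0(z)$ in $t$, using the ODE for $t\mapsto\varphi_{0,t}$ quoted from \cite[Theorem 6.6]{BCM1}; this yields the formula on $\varphi_{0,t}(\mathbb D)$ and is then extended to all of $\mathbb D$ by the identity principle. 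You instead expand $f_{s+h}(z)-f_s(z)=f_{s+h}(z)-f_{s+h}(\varphi_{s,s+h}(z))$ along a segment and compute the limit of $(z-\varphi_{s,s+h}(z))/h$ by a Lebesgue-point analysis of the integrated Loewner ODE; this produces existence and the formula $-G(z,s)f_s'(z)$ in one stroke and avoids the extension step, at the cost of redoing (for a varying base time $s$) the differentiation of the evolution family that the paper simply cites. Both routes are sound. In part (2) your computation is identical to the paper's; your explicit remark that one must read local absolute continuity of $t\mapsto f_t(w)$ into the hypothesis (so that $F'=0$ a.e.\ forces $F$ to be constant) addresses a point the paper passes over silently --- though note that your proposed recovery of this regularity from the PDE via a distortion-theorem-plus-Gronwall estimate is itself circular, since a pointwise a.e.\ derivative bound does not by itself yield absolute continuity.
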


\begin{proof}[Proof of (1.a).]
By the very definition of Loewner chain, the map $s\in [0,+\infty )\mapsto f_{s}(z)\in \C$ is absolutely
continuous, for all fixed $z\in \D$. Thus there exists a set of zero measure $ N_{1}(z)\subset  [0,+\infty)$
such that for every $s\in \lbrack 0,+\infty )\setminus N_{1}(z)$ the following limit exists
\begin{equation*}
D_{s}(z)=\frac{\partial f_{s}(z)}{\partial s}=\lim_{h\rightarrow 0}\frac{ f_{s+h}(z)-f_{s}(z)}{h}.
\end{equation*}
Let $k_{n}\in L_{loc}^{d}([0,+\infty ),\R)$ be a non negative function such that
\begin{equation*}
|f_{s}(z)-f_{t}(z)|\leq \int_{s}^{t}k_{n}(\xi )d\xi
\end{equation*}
whenever $|z|\leq 1-1/n$ and $0\leq s\leq t.$ For each $n,$ there exists a set $N_{2}(n)\subset [0,+\infty )$ of
zero measure such that for every $s\in [0,+\infty )\setminus N_{2}(t)$ there exists the limit
\begin{equation*}
k_{n}(s)=\lim_{h\rightarrow 0}\frac{1}{h}\int_{s}^{s+h}k_{n}(\eta )d\eta.
\end{equation*}
Let us define
\begin{equation*}
N:=\left( \bigcup_{n=1}^{\infty }N_{1}\left(\frac{1}{n+1}\right)\right) \bigcup \left( \bigcup_{n=1}^{\infty }N_{2}(n)\right).
\end{equation*}
Obviously, $N$ is a subset of $[0,+\infty )$ of zero measure, independent of $z$. We are going to prove that for
all $s\in [ 0,+\infty )\setminus N$ the following limit
\begin{equation*}
\lim_{h\rightarrow 0}\frac{f_{s+h}(z)-f_{s}(z)}{h}
\end{equation*}
exists and attained uniformly on compact subsets of $\D.$

First of all we show that for every $s\in (0,+\infty )\setminus N$ the family
\begin{equation*}
\mathcal{F}_{s}:=\{F_{h}:=\frac{1}{h}(f_{s+h}-f_{s}):0<h<1\text{ or }-s<h<0\}
\end{equation*}
is a relatively compact set in $\mathrm{Hol}(\D,\C)$.
To this aim, we consider two cases: $(a)$ $0<h<1;$ $(b)$
$-s<h<0.$

{\it Case $(a)$}: \ Fix $r\in (0,1).$ Let $n\in \N$ be such that $r<1-1/n$. Then, for every $|z|\leq r,$
\begin{equation*}
|F_{h}(z)|=\left\vert \frac{1}{h}(f_{s+h}(z)-f_{s}(z))\right\vert \leq \frac{ 1}{h}\int_{s}^{s+h}k_{n}(\xi )d\xi
\leq \tilde{C}<+\infty ,
\end{equation*}
where the last inequality takes place since $s\notin N_{2}(n)$. Hence, $$\sup \{|F_{h}(z)|:|z|\leq r,
0<h<1\}<+\infty$$ and consequently, by the Montel criterion, the subfamily of $\mathcal{F}_{s}$ with $0<h<1$ is a normal family in $\UD$, as wanted.

{\it Case $(b)$}: the proof is similar to that of case $(a)$ and we omit it.

Thus the family $\mathcal{F}_{s}$ is relatively compact in $\mathrm{Hol}(\D,\C)$.
Let $\psi ,\phi $ be any pair of
limit functions of $\mathcal{F}_{s}$ as $h\to0$. By the very definition of $N$,
\begin{equation*}
D_{s}\left(\frac{1}{m+1}\right)=\psi\left (\frac{1}{m+1}\right)=\phi \left(\frac{1}{m+1}\right),
\end{equation*}
for every $m\in \N$.
But $\{\frac{1}{m+1}\}$ is a sequence accumulating at $0$, hence by the identity principle
$\psi =\phi $. This shows that
\begin{equation*}
\lim_{h\rightarrow 0}\frac{f_{s+h}(z)-f_{s}(z)}{h},
\end{equation*}
exists for all $s\in (0,+\infty )\setminus N$ and is attained uniformly on compact subsets of $\D$, which
finishes the proof of (1.a).

\noindent{\it Proof of (1.b).} By Theorem \ref{LCimpliesEF}, there is an evolution family $(\varphi _{s,t})$ of order $d$ associated with
$(f_{t}).$ Let $G:\D\times [ 0,+\infty )\rightarrow \C$ be the Herglotz vector field whose positive trajectories
are $(\varphi _{s,t})$ (such a vector field exists by Theorem \ref{EF-bijection-VF}). Let $N_{1}\subset [
0,+\infty )$ be the set of zero measure given by \cite[Theorem 6.6]{BCM1} such that $\frac{\partial \varphi
_{0,u}}{\partial u}(z)=G(\varphi _{0,u}(z),u)$ for all $u\in (0,+\infty )\setminus N_{1}$ and all $z\in \D$.
Let $N_{2}\subset [0,+\infty )$ stand for the set of zero measure which has been denoted by $N$ in part (1.a) of this theorem.

Let $N:=N_{1}\cup N_{2}$. Differentiating with respect to $t$ the equality $ f_{t}(\varphi _{0,t}(z))=f_{0}(z)$,
for $z\in \D$ and $t\in (0,+\infty )\setminus N$ we obtain
\begin{align*}
0& =f_{t}^{\prime }(\varphi _{0,t}(z))\frac{\partial \varphi _{0,t}}{
\partial t}(z)+\frac{\partial f_{t}}{\partial t}(\varphi _{0,t}(z)) \\
& =f_{t}^{\prime }(\varphi _{0,t}(z)))G(\varphi _{0,t}(z),t)+\frac{\partial f_{t}}{\partial t}(\varphi
_{0,t}(z)).
\end{align*}
Therefore, $f_{t}^{\prime }(w)G(w,t)=-\frac{\partial f_{t}}{\partial t}(w)$ for all $w\in \varphi _{0,t}(\D)$.
Since the $\varphi _{0,t}$'s are univalent, the identity principle for holomorphic maps implies that this equality is valid for the whole unit disk~$\UD$.

\noindent{\it Proof of (2).} Fix a point $z$ in the unit disk. Then, up to a set of measure zero, we have
\begin{eqnarray*}
\frac{\partial }{\partial t}\left( f_{t}(\varphi _{s,t}(z))\right) &=&f_{t}^{\prime }(\varphi
_{s,t}(z))\frac{\partial \varphi _{s,t}}{\partial
t}(z)+\frac{\partial f_{t}}{\partial t}(\varphi _{s,t}(z)) \\
&=&f_{t}^{\prime }(\varphi _{s,t}(z))\frac{\partial \varphi _{s,t}}{\partial
t}(z)-G(\varphi _{s,t}(z),t)f_{s}^{\prime }(\varphi _{s,t}(z)) \\
&=&f_{t}^{\prime }(\varphi _{s,t}(z))\left[ \frac{\partial \varphi _{s,t}}{%
\partial t}(z)-G(\varphi _{s,t}(z),t)\right] =0.
\end{eqnarray*}
Therefore, $f_{t}(\varphi _{s,t}(z))$ does not depend on $t$. Hence,  $f_{t}(\varphi
_{s,t}(z))=f_s(\varphi_{s,s}(z))=f_{s}(z)$ and  the proof finishes just by applying Lemma \ref{equationimpliesLC}.
\end{proof}

\section{Remarks about semigroups}\label{semi}

A \textit{(one-parameter) semigroup of holomorphic functions} is a  continuous homomorphism
$\Phi:t\mapsto\Phi(t)=\phi_{t}$ from the additive semigroup of non-negative real numbers into the composition
semigroup of holomorphic self-maps of $\mathbb{D}$. Namely, $\Phi$ satisfies the following three conditions:

\begin{enumerate}
\item[S1.] $\phi_{0}$ is the identity in $\mathbb{D},$

\item[S2.] $\phi_{t+s}=\phi_{t}\circ\phi_{s},$ for all $t,s\geq0,$

\item[S3.] $\phi_{t}(z)$ tends to $z$ as $t$ tends to $0,$ uniformly on
compact subsets of $\mathbb{D}.$
\end{enumerate}

Let $(\phi_{t})$ be a semigroup of holomorphic self-maps of $\D$. Let $\varphi_{s,t}:=\phi_{t-s}$ for  $0\leq
s\leq t<+\infty.$ Then, by \cite[Example 3.4]{BCM1}, $(\varphi_{s,t})$ is an evolution family of order $\infty.$

Given a semigroup $\Phi=(\phi_{t})$, it is well-known (see \cite{Shoikhet}, \cite{Berkson-Porta}) that there
exists a \textit{unique} holomorphic function $G:\mathbb{D\rightarrow C}$ such that,
\[
\frac{\partial\phi_{t}(z)}{\partial t}=G\left(  \phi_{t}(z)\right)  =G\left( z\right)
\frac{\partial\phi_{t}(z)}{\partial z}\quad\text{for all } z\in\D\text{ and }t\geq0.
\]
The function $G$ is known as the {\it infinitesimal generator} of the semigroup and, obviously, $G$ (that clearly does not depend on $t$) is the Herglotz vector field associated with the evolution family $(\varphi_{s,t})$. Berkson and Porta~\cite{Berkson-Porta} proved that there exist $\tau\in\overline{\D}$ and a holomorphic function $p:\D\rightarrow \C$ with $\Re p(z)\geq0$
such that
\[
G(z)=(\tau-z)(1-\overline{\tau}z)p(z),\text{ \quad}z\in\mathbb{D},
\]
and moreover,  any function $G$ of this form is the infinitesimal generator of some semigroup.

In this very particular case when the  evolution family is generated by a semigroup, the point $\tau$ has a dynamical meaning. To explain this meaning, we have to recall some notions from iteration theory.

It can be  easily deduced from the Schwarz-Pick lemma that a non-identity self-map $\phi$ of the unit disc  can
have at most one fixed point in ${\mathbb{D}}$. If such a unique fixed point in ${\mathbb{D}}$ exists, it is
usually called the \textit{Denjoy-Wolff point\/}. The sequence of iterates $\{\phi_{n}\}$ of $\phi$ converges to
it uniformly on the compact subsets of $\D$ whenever $\phi $ is not a disc automorphism.

If $\phi$ has no fixed points in $\D$, the Denjoy-Wolff theorem
(see, e.\,g., \cite{Abate}) guarantees the existence of a unique
point $\tau$ on the unit circle $\partial{\mathbb{D}}$ which is the
\textit{attractive fixed point\/}, that is, the sequence of iterates
$\{\phi_{n}\}$ converges to $\tau$ uniformly on the compact subsets
of $\D$. Such a point $\tau$ is again called the
\textit{Denjoy-Wolff point\/} of $\phi$. When $\tau\in\partial\D$ is
the Denjoy-Wolff point of $\phi$,  the angular derivative
$\phi^{\prime}(\tau)$ is actually real-valued and, moreover,
$0<\phi^{\prime}(\tau)\leq1$ (see \cite{Pommerenke-II}). As it is
often done in the literature, we classify the holomorphic self-maps
of the disc into three categories according to their behavior near
the Denjoy-Wolff point:

\begin{itemize}
\item[(a)] \textit{elliptic\/}: the ones with a fixed point inside the unit disc~$\UD$;

\item[(b)] \textit{hyperbolic\/}: the ones with the Denjoy-Wolff point
$\tau\in\partial{\mathbb{D}}$ such that $\phi^{\prime}(\tau)<1$;

\item[(c)] \textit{parabolic\/}: the ones with the Denjoy-Wolff point $\tau
\in\partial{\mathbb{D}}$ such that $\phi^{\prime}(\tau)=1$.
\end{itemize}

Going back to semigroups, we have to say that the point $\tau$ that appears in the Berkson-Porta representation
formula for the infinitesimal generator of the semigroup $(\phi_t)$ is the Denjoy-Wolff point of all the
functions $\phi_t$. In particular, all the functions share the Denjoy-Wolff point. But something more can be
said. If there is $t_0>0$ such that the function $\phi_{t_0}$ is elliptic (resp. hyperbolic, parabolic) then
all the functions of the semigroup are elliptic (resp. hyperbolic, parabolic).

Besides the above classification of self-maps of the unit disk, there are two quite different types of parabolic
functions. To distinguish such functions, we have to recall the notion of hyperbolic step. Given a holomorphic
self-map $\phi$ of $\D$ and a point $z_0$ in $\D$, we define the \textit{forward orbit\/} of $z_0$ under $\phi$
as the sequence $z_{n}=\phi_{n}(z_{0})$. It is customary to say that $\phi$ is of \textit{zero hyperbolic
step\/} if for some point $z_{0}$ the orbit $z_{n}=\phi_{n}(z_{0})$ satisfies the condition
$\lim_{n\to\infty} \rho_\D(z_{n},z_{n+1})=0$. It is well-known that the word \textquotedblleft
some\textquotedblright\ here can be replaced by \textquotedblleft all\textquotedblright. In other words, the
definition does not depend on the choice of the initial point of the orbit (see, for example,
\cite{Contreras-Diaz-Pommerenke:zoo}).

Using the Schwarz-Pick Lemma, it is easy to see that the maps which are not of zero hyperbolic step are
precisely those holomorphic self-maps $\phi$ of $\D$ for which
\[
 \lim_{n\to\infty}\rho_\D(z_{n},z_{n+1})>0 \,,
\]
for some forward orbit $\{z_n\}_{n=1}^\infty$ of $\phi$, and hence for all such orbits. This is the reason why
they are called \textit{maps of positive hyperbolic step}. For a survey of these properties, the reader may
consult \cite{Contreras-Diaz-Pommerenke:zoo}.

It is easy to show that if $\phi$ is elliptic and is not an automorphism, then it is of zero hyperbolic step. If
$\phi $ is hyperbolic, then it is of positive hyperbolic step. For parabolic maps the situation is more
complicated: there are parabolic functions of zero hyperbolic step and of positive hyperbolic step. For example,
the following dichotomy holds for parabolic linear-fractional maps: every parabolic automorphism of $\D$ is of
positive hyperbolic step, while all non-automorphic linear-fractional parabolic self-maps of $\D$ are of zero
hyperbolic step. For semigroups of holomorphic functions we can state the following

\begin{lemma}
Let $(\phi_t)$ be a semigroup of parabolic functions in the unit disk. If there exists $t_0>0$ such that the
function $\phi_{t_0}$ is of zero hyperbolic step, then all the functions $\phi_t$, with $t>0$, of the semigroup are of zero hyperbolic step.
\end{lemma}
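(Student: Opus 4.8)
The plan is to reduce the zero/positive hyperbolic step dichotomy for the individual maps $\phi_t$ to the behaviour at infinity of the hyperbolic speed of a single semigroup trajectory. Fix $z_0\in\D$ and put $\gamma(s):=\phi_s(z_0)$; by the semigroup equation $\gamma$ is a $C^1$ curve with $\gamma'(s)=G(\gamma(s))$, where $G$ is the infinitesimal generator of $(\phi_t)$. Since $\gamma(nt)=\phi_t^n(z_0)$ and, by Schwarz--Pick applied to $\phi_h$ (using $\gamma(u+h)=\phi_h(\gamma(u))$), the function $u\mapsto\rho_\D(\gamma(u),\gamma(u+t))$ is non-increasing, the quantity
\[
\mu(t):=\lim_{n\to\infty}\rho_\D\big(\gamma(nt),\gamma((n+1)t)\big)=\lim_{u\to+\infty}\rho_\D\big(\gamma(u),\gamma(u+t)\big)
\]
is well defined, and $\phi_t$ is of zero hyperbolic step if and only if $\mu(t)=0$ (recall that this property does not depend on the base point of the orbit). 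Thus it suffices to prove that $\mu(t_0)=0$ for one $t_0>0$ forces $\mu(t)=0$ for every $t>0$.

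The first ingredient is the monotonicity of the hyperbolic speed $\kappa(s):=2|G(\gamma(s))|/(1-|\gamma(s)|^2)$. Differentiating $\gamma(s+h)=\phi_h(\gamma(s))$ in $s$ gives $G(\gamma(s+h))=\phi_h'(\gamma(s))\,G(\gamma(s))$, and inserting this into the Schwarz--Pick inequality $|\phi_h'(w)|/(1-|\phi_h(w)|^2)\le 1/(1-|w|^2)$ at $w=\gamma(s)$ yields $\kappa(s+h)\le\kappa(s)$ for all $h\ge0$. Hence $\kappa$ is non-increasing and $\kappa_\infty:=\lim_{s\to+\infty}\kappa(s)\in[0,+\infty)$ exists. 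Bounding the geodesic by the trajectory arc, $\rho_\D(\gamma(u),\gamma(u+t))\le\int_u^{u+t}\kappa(s)\,ds$, and letting $u\to+\infty$ (dominated convergence, $\kappa\le\kappa(0)$) gives $\mu(t)\le t\,\kappa_\infty$ for all $t>0$. Consequently, once we know $\kappa_\infty=0$, every $\phi_t$ with $t>0$ is of zero hyperbolic step and the lemma follows.

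It remains to deduce $\kappa_\infty=0$ from $\mu(t_0)=0$; this is the heart of the argument. I would pass to the holomorphic model of the semigroup: since the $\phi_t$ are parabolic, $G$ has no zero in $\D$, so the function $h$ with $h'=1/G$ (the Koenigs function of $(\phi_t)$) is a univalent map $h:\D\to\C$ satisfying $h\circ\phi_t=h+t$ for all $t\ge0$; set $\Omega:=h(\D)$, a simply connected domain, and $w_0:=h(z_0)$. Conformal invariance of the hyperbolic metric gives $\kappa(s)=\lambda_\Omega(w_0+s)$, the hyperbolic density of $\Omega$, and $\mu(t)=\lim_{x\to+\infty}\rho_\Omega(w_0+x,\,w_0+x+t)$. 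Now combine the Schwarz-lemma upper bound $\lambda_\Omega(w)\le 2/\operatorname{dist}(w,\partial\Omega)$ with the lower estimate
\[
\rho_\Omega(A,B)\ \ge\ \tfrac12\log\!\Big(1+\tfrac{|A-B|}{\operatorname{dist}(A,\partial\Omega)}\Big),
\]
which follows by integrating the Koebe bound $\lambda_\Omega\ge 1/\big(2\operatorname{dist}(\cdot,\partial\Omega)\big)$ along a geodesic from $A$ to $B$ and using that $\operatorname{dist}(\cdot,\partial\Omega)$ is $1$-Lipschitz. Taking $A=w_0+x$, $B=w_0+x+t_0$, the hypothesis $\mu(t_0)=0$ forces $\operatorname{dist}(w_0+x,\partial\Omega)\to+\infty$ as $x\to+\infty$, whence $\kappa(x)=\lambda_\Omega(w_0+x)\le 2/\operatorname{dist}(w_0+x,\partial\Omega)\to0$, i.e. $\kappa_\infty=0$, as needed.

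I expect the main obstacle to lie in making this last step fully self-contained: one must verify that the Koenigs function $h$ is genuinely univalent on all of $\D$ (so that $\Omega$ is a simply connected domain, which is what the Koebe $1/4$ bound on $\lambda_\Omega$ requires), and one must keep track of the normalization constants relating hyperbolic and pseudo-hyperbolic distance (these enter only multiplicatively, so the conditions ``$\mu(t)=0$'' are unaffected). Everything else is a routine Schwarz--Pick and length computation. Alternatively, the implication $\mu(t_0)=0\Rightarrow\kappa_\infty=0$ can be quoted directly from the known geometric classification of parabolic self-maps in terms of their Koenigs image (see the references cited above); since $\Omega$, and hence its geometry up to a dilation, is the same for all $\phi_t$, the conclusion is then immediate.
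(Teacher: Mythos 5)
Your proof is correct and follows essentially the same route as the paper: both pass to the Koenigs model $h\circ\phi_t=h+t$ on $\Omega=h(\D)$ and use the two-sided comparison of the hyperbolic density with $1/\operatorname{dist}(\cdot,\partial\Omega)$ (the lower ``Distance Lemma'' bound to deduce from $\mu(t_0)=0$ that $\operatorname{dist}(w_0+x,\partial\Omega)\to+\infty$, and the upper bound $\lambda_\Omega\le 2/\operatorname{dist}(\cdot,\partial\Omega)$ to conclude for every $t$). The only organizational difference is that you control the limit via the Schwarz--Pick monotonicity of the hyperbolic speed $\kappa$ along the trajectory, giving the clean estimate $\mu(t)\le t\,\kappa_\infty$, whereas the paper instead uses the monotonicity of $\delta_\Omega$ on $[0,+\infty)$ coming from $\Omega+t\subseteq\Omega$; both devices serve the same purpose and your packaging is equally valid.
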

\begin{proof} In this proof we will use different well-known properties of the hyperbolic distance on
simply connected domains in the complex plane that can be seen in \cite{Shapiro}.

By \cite{Siskakis}, there exists a univalent function $h:\D\to\C$, with $h(0)=0$, such that $h\circ \phi_t =h+t$
for all $t>0$. Write $\Omega=h(\D)$ and denote by $\delta_\Omega (w)$ the Euclidean distance from $w\in \Omega$
to $\partial \Omega$. Since $\Omega +t\subseteq\Omega$ for all $t>0$, we can easily obtain that the function
$\delta_\Omega :[0,+\infty)\to\R$ is non-decreasing (we are considering here the restriction of $\delta_\Omega$
to the half-line $[0,+\infty)$). By hypothesis, the sequence $\rho_\D (\phi_{nt_0}(0),\phi_{(n+1)t_0}(0))$ goes
to zero. Moreover, by the Distance Lemma, we have that
\begin{eqnarray*}
  \rho_\D (\phi_{nt_0}(0),\phi_{(n+1)t_0}(0)) &=&  \rho_\Omega (h(0)+nt_0,h(0)+(n+1)t_0)
   =  \rho_\Omega (nt_0,(n+1)t_0) \\
   &\geq& \frac{1}{2} \log \left(1+\frac{|t_0|}{\min\{\delta_\Omega(nt_0),\delta_\Omega((n+1)t_0)\}}\right),
\end{eqnarray*}
where $\rho_\Omega$ denotes the hyperbolic distance on $\Omega$. Thus $\delta_\Omega((n+1)t_0)$ goes to $\infty$
and we conclude that $\lim_{t\to+\infty}\delta_\Omega(t)=\infty.$

Now fix $t>0$. Write $\Gamma _n=[nt,(n+1)t]$ and denote by $l_\Omega
(\Gamma_n)$ the hyperbolic length of $\Gamma _n$ in $\Omega$. We
have
$$
   \rho_\D (\phi_{nt}(0),\phi_{(n+1)t}(0)) = \rho_\Omega (nt,(n+1)t)
   \leq  l_\Omega (\Gamma_n)
   \leq  2\int_{\Gamma_n}\frac{|dw|}{\delta_\Omega (w)}
   \leq 2 \frac{1}{\delta_\Omega (nt)},
$$
where again we have used the monotonicity of $\delta_\Omega$ on $[0,+\infty)$. Since the sequence
$\delta_\Omega((n+1)t)$ goes to $\infty$, the above inequality implies that $\rho_\D
(\phi_{nt}(0),\phi_{(n+1)t}(0))$ tends to zero as $n$ goes to $\infty$. The arbitrariness of $t$ concludes the
proof.
\end{proof}

What can we say about the Loewner chains associated with the evolution families $(\varphi_{s,t})=(\phi_{t-s})$?

If the semigroup $(\phi_{t})$ is elliptic and its Denjoy-Wolff point is zero, then (see \cite{Siskakis}) there is a complex number
$c$ and a univalent function $h$ such that $\Re c\geq 0$, $h(0)=0$, $h'(0)=1$, and
\begin{equation}\label{Schr}
h\circ \phi_t=e^{-ct}h.
\end{equation}
The function $h$ is called the {K\oe nigs function} of the semigroup $(\phi_t)$.
>From equation~\eqref{Schr}, it is clear that the functions $f_t= e^{ct}h$ form a normalized
Loewner chain associated with the evolution family $(\varphi_{s,t})=(\phi_{t-s})$. If $\Re c>0$, then
$\cup_{t\geq 0} f_t(\D)=\C$ and, by Theorem~\ref{Gum_uniq}, this is the unique normalized Loewner chain associated with
$(\varphi_{s,t})$. In particular, this implies the uniqueness of the K\oe nigs function, a fact which is very
well-known. If $\Re c=0$, then  $h$ is the identity map.

Now suppose that the Denjoy-Wolff point of the semigroup is on the boundary of the unit disc. Without loss of
generality, we assume that such a point is 1. Then there is a univalent function $h$ such that $h(0)=0$ and
$h\circ \phi_t =h+t$ \cite{Siskakis}. As in the elliptic case, the function $h$ is referred to as the {K\oe nigs function} of the semigroup~$(\phi_t)$. Siskakis \cite{Siskakis-tesis} (see also \cite{Siskakis}) proved that the K\oe nigs function is unique in this case as well. As an easy application of our results we will reprove the uniqueness of the K\oe nigs function. Similarly to the elliptic case, we have that the functions $f_t=h-t$ form a Loewner chain associated with the evolution family
$(\varphi_{s,t})=(\phi_{t-s})$. Notice that this Loewner chain is not necessarily normalized. To proceed let us
distinguish the different type of semigroups.

If the functions $\phi_t $ are hyperbolic, then by \cite[Theorem 2.1]{Contreras-Diaz:pacific}, there is a horizontal
strip $\Omega$ such that the range of $h$ is included in $\Omega$ and $\cup_{t\geq 0} f_t(\D)=\Omega$. In this
case, there are much more Loewner chains associated with $(\varphi_{s,t})$ but there is no other Loewner chain
$(g_t)$ of the form $g_t=k-t$, where $k$, $k(0)=0$, is a univalent holomorphic function in~$\UD$. Indeed, if such another function $k$ does exist, then by Theorem \ref{Gum_uniq}, there is a univalent holomorphic function $a: \Omega \to \C$ such that $a (h(z)-t)=k(z)-t$ for all $t\geq0$ and for all $z\in\D$. Derivating with respect to $t$, we have $a' (h(z)-t)=1$ for all $t\geq0$ and for all $z\in\D$. That is $a(z)=z+c$ for some constant $c$. Therefore $h(z)-t+c=k(z)-t$ for all $t\geq0$ and for
all $z\in\D$. Since $h(0)=k(0)=0$, we deduce that $c=0$ and $h=k$.

Consider now the parabolic case. According to the above lemma we have to distinguish two subcases. On one hand, if
for some (or for any) $t_0>0$ the function $\phi_{t_0}$ is of zero hyperbolic step, then by \cite[Theorem
3.1 and Proposition 3.3]{Contreras-Diaz-Pommerenke:abel}, the range of $h$ is not included in any horizontal half-plane. In this case, we have that $\cup_{t\geq 0} f_t(\D)=\C$.
Therefore, up to normalization, this is a unique Loewner chain associated with $(\varphi_{s,t})$. On the other
hand, if one (and then all) of the mappings $\phi_t$ is of positive hyperbolic step, then the range of
$h$ is included in a horizontal half-plane $\Omega$. In fact, we can choose the half-plane such that
$\cup_{t\geq 0} f_t(\D)=\Omega$. By the same reason as in the hyperbolic case, there are much more Loewner chains associated with $(\varphi_{s,t})$ but there is no other Loewner chains $(g_t)$ of the form $g_t=k-t$, where $k$, $k(0)=0$, is a univalent holomorphic function in~$\UD$. That is, again, the K\oe nigs function of the semigroup is unique.

\end{document}